\newtheorem{theorem}{Theorem}[section]
\newtheorem{example}[theorem]{Example}
\newtheorem{lemma}[theorem]{Lemma}
\newtheorem{corollary}[theorem]{Corollary}
\newtheorem{question}[theorem]{Question}
\newtheorem{claim}{Claim}[theorem]
\newtheorem{proposition}[theorem]{Proposition}
\newtheorem{conjecture}[theorem]{Conjecture}
\newtheorem{obs}[theorem]{Observation}
\newtheorem{fact}[theorem]{Fact}
\newtheorem{Claim}{Claim}
\newcommand{\newClaim}{\setcounter{Claim}{0}}
\theoremstyle{definition}
\newtheorem{definition}[theorem]{Definition}
\newcommand{\mc}[1]{\mathcal{#1}}
\newcommand{\mbb}[1]{\mathbb{#1}}
\newcommand{\mf}[1]{\mathfrak{#1}}
\newcommand{\setm}{\setminus}
\newcommand{\empt}{\emptyset}
\newcommand{\subs}{\subset}
\newcommand{\dom}{\operatorname{dom}}
\def\<{\left\langle}
\def\>{\right\rangle}
\def\br#1;#2;{\bigl[ {#1} \bigr]^ {#2} }
\newcommand{\oo}{{{\omega}_1}}
\newcommand{\supp}[1]{\textrm{supp}}
\newcommand{\ssigma}{\sigma^*}
\newcommand{\snode}[2]{\node (#1) [ pro] {\tiny #2};}
\title{Comparing weak versions of separability}
\author{D\'aniel Soukup}
\address{Department of Mathematics, University of Toronto, Toronto, ON}
\email{daniel.soukup@mail.utoronto.ca}
\author{Lajos Soukup}
\address{R\'enyi Institute of Mathematics, Hungarian Academy of Sciences, Budapest, Hungary}
\email{soukup@renyi.hu}
\urladdr{http://www.renyi.hu/$\sim$soukup}
\author{Santi Spadaro}
\address{Department of Mathematics and Statistics, Faculty of Science and Engineering, York University, Toronto, ON}
\email{sspadaro@mathstat.yorku.ca, santispadaro@yahoo.com}
\thanks{The second author was supported by  the Hungarian National Research Grant OTKA  K 83726.
The third named author was partially supported by an European Science Foundation INFTY grant, by the Center for Advanced Studies at Ben Gurion University of the Negev and by an INdAM Cofund outgoing fellowship}
\subjclass[2010]{54D65,54B10,54C35
}
\keywords{
selection principles,
separable,
M-separable,
R-separable,
H-separable,
d-separable, nwd-separable, D-separable, NWD-separable, groupable,
GN-separable}
\begin{document}

\begin{abstract}
 Our aim is to investigate spaces with $\sigma$-discrete and meager dense sets, as well as selective versions of these properties. We construct numerous examples to point out the differences between these classes while answering questions of Tkachuk \cite{Tkdsep}, Hutchinson \cite{Hutch} and the authors of \cite{BMS}.
\end{abstract}
\maketitle

\section{Introduction}

Topologists and analysts have often considered properties stating that a space has a \emph{small} dense set. The most popular of them is \emph{separability}, that is, the property of having a countable dense set. The famous \emph{Suslin Problem} asked whether there is a non-separable linearly ordered space where families of pairwise disjoint open sets are at most countable and the still open \emph{Separable Quotient Problem} asks whether every infinite dimensional Banach space has an infinite dimensional separable quotient.

Smallness conditions for dense sets other than separability have also been
considered. A space is called \emph{d-separable} if it has a dense set which is
the countable union of discrete subsets. This property was introduced by Kurepa
in his PhD dissertation under the name of \emph{property $K_0$} as part of his
study of the Suslin Problem. The latter can in fact be restated to ask whether
there is a non-d-separable linearly ordered space where families of open sets
are at most countable. $d$-separability has a much better behavior than
separability: arbitrary products of $d$-separable spaces are $d$-separable, and
for every space $X$ there is a cardinal $\kappa$ such that $X^\kappa$ is
$d$-separable. First, we will introduce a natural property called
\emph{$nwd$-separability} which is obtained by replacing \emph{discrete} with
\emph{nowhere dense} in the definition. Every $d$-separable space without
isolated points is $nwd$-separable, and $nwd$-separability shows a behavior
which is somewhat close to that 
of $d$-separability. 

A new class of smallness conditions for dense sets has been introduced as part of the program known as \emph{selection principles in mathematics} and attracted a lot of attention recently, see \cite{BBM}, \cite{BMS}, \cite{GS} or \cite{Sak} among others. The general idea is that a small dense set can be obtained by diagonalizing over a countable sequence of dense sets. In this way, one can define a selective strengthening of any of the properties we mentioned above: a space is \emph{$D$-separable} \cite{BMS}  iff for every sequence $\{D_n: n < \omega\}$ of dense sets there is a discrete set $E_n \subset D_n$ for every $n< \omega$ such that $\bigcup_{n<\omega} E_n$ is dense. We define $NWD$-separability as a selective version of $nwd$-separability in a similar way and compare it with $D$-separability.

We devoted most of our efforts to point out various differences between these properties and to construct a great wealth of examples.

In Section \ref{nonsel}, we introduce $nwd$-separability and start with pointing out some facts concerning products. We continue by proving that $\omega^*\times 2^\omega$ is a compact space which is $nwd$-separable but not $d$-separable.
 The section ends with answering a question of Tkachuk \cite{Tkdsep} by showing that there is 
a Corson-compact space with non-d-separable square.

Next, in Section \ref{sel} we begin to deal with selective versions of separability. We present a new construction of a countable $M$-separable, non-$R$-separable space which also serves as an answer to a question of Hutchinson \cite{Hutch}. We present a general framework to deal with selective separability properties and conclude that the class of $D$-and $NWD$-separable spaces are close under finite unions.

In Section \ref{zfc}, our aim is to construct ZFC examples separating the newly introduced properties. We present an $NWD$-separable space which is not $d$-separable and countable, dense subsets of $2^\mathfrak{c}$ which are not $NWD$-separable. We finish by investigating some related cardinal invariants and answering several questions from \cite{BMS}.

Section \ref{force} is devoted to show, by forcing, that even in the class of first-countable spaces, $d$-and $D$-separability ($nwd$-and $NWD$-separability) can be different; compare this with the result that every separable Fr\'echet space is $M$-separable \cite{BD}.

Finally, in Section \ref{mn} we finish with some positive results: we prove that every monotone normal, $nwd$-separable space is $D$-separable and show that the additional assumption of compactness even yields a $\sigma$-disjoint $\pi$-base. The last part of the section deals with the question whether $\sigma(2^\oo)$ is $D$-separable.

\section{Non-selective properties} \label{nonsel}

We start by defining two natural weakening of separability. The first one has been studied extensively in the past.

\begin{definition}
A space is $d$-separable (respectively, $nwd$-separable) if there are discrete (respectively, nowhere dense) sets $\{D_n: n < \omega \}$ such that $\bigcup_{n<\omega} D_n$ is dense.
\end{definition}

Arhangel'skii \cite{Ardsep} proved that arbitrary products of $d$-separable
spaces are $d$-separable. Juh\'asz and Szentmikl\'ossy \cite{JS} proved that for
every space $X$, the space $X^{d(X)}$ is $d$-separable. Moreover, they proved
that for every compact space $X$, the countable power $X^\omega$ is
$d$-separable. However, the behavior of product spaces considering
$nwd$-separability is much simpler:

\begin{proposition} If $X$ is $nwd$-separable and $Y$ is arbitrary then $X\times Y$ is $nwd$-separable; thus finite products of $nwd$-separable spaces remain $nwd$-separable.

 $\prod \{X_\alpha:\alpha<\lambda\}$ is $nwd$-separable for arbitrary spaces $X_\alpha$  with $|X_\alpha|\ge 2$ and infinite $\lambda$.
\end{proposition}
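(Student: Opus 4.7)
The plan is as follows. For the first assertion, let $\{D_n : n < \omega\}$ be nowhere dense subsets of $X$ with $\bigcup_n D_n$ dense in $X$, and I will show that $\{D_n \times Y : n < \omega\}$ witnesses $nwd$-separability of $X \times Y$. The union $(\bigcup_n D_n) \times Y$ is dense as a product of dense sets; and since $\overline{D_n \times Y} = \overline{D_n} \times Y$, any basic open rectangle $U \times V$ contained in this closure forces $U \subseteq \overline{D_n}$, hence $U = \emptyset$ by nowhere density of $D_n$. The closure therefore has empty interior. Iterating this construction gives the claim about finite products.

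For the second assertion, I fix a point $f_0 \in \prod_\alpha X_\alpha$ and, for each $n < \omega$, set
\[
D_n = \{f \in \prod_\alpha X_\alpha : |\{\alpha < \lambda : f(\alpha) \ne f_0(\alpha)\}| \le n\}.
\]
The union $\bigcup_n D_n$ is the $\sigma$-product of $\prod_\alpha X_\alpha$ around $f_0$, and it is dense because given any basic open $U$ with finite support $F$, I can define $f \in U$ by letting $f(\alpha)$ be any point of the restriction of $U$ at $\alpha \in F$ and $f(\alpha) = f_0(\alpha)$ otherwise, so that $f \in U \cap D_{|F|}$.

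To show that each $D_n$ is nowhere dense, I take a basic open $U$ with finite support $F$ and choose $n+1$ coordinates $\alpha_1, \dots, \alpha_{n+1}$ from the infinite complement $\lambda \setminus F$. At each such coordinate I pick a point $p(\alpha_i) \in X_{\alpha_i}$ admitting an open neighborhood $V_i$ with $f_0(\alpha_i) \notin V_i$, and extend $p$ to lie in $U$. The basic refinement of $U$ obtained by further restricting the $\alpha_i$-coordinate to $V_i$ is a nonempty open set whose every point disagrees with $f_0$ at all $n+1$ of the $\alpha_i$, and so is disjoint from $D_n$; this gives $U \not\subseteq \overline{D_n}$, so $\overline{D_n}$ has empty interior. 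The delicate step here is locating the $V_i$: this is automatic in $T_1$ factors, while in any $T_0$ factor with at least two points one arranges it by choosing $f_0(\alpha)$ with $\overline{\{f_0(\alpha)\}} \ne X_\alpha$, so that some point of $X_\alpha$ is separated from $f_0(\alpha)$ by an open set.
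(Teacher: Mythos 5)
Your argument for the first assertion is essentially the paper's: the paper simply observes that $E$ nowhere dense in $X$ implies $E\times Y$ nowhere dense in $X\times Y$, which is exactly what your computation $\overline{D_n\times Y}=\overline{D_n}\times Y$ establishes. For the second assertion you take a genuinely different (and correct) route. The paper first reduces the arbitrary infinite product to a countably infinite one by regrouping the factors, and then uses the ``tail'' sets $D_n=\{y:(\forall i\ge n)\,y(i)=x_i\}$, each of which is nowhere dense because a basic open set cannot constrain all coordinates $i\ge n$. You instead work directly on $\prod_{\alpha<\lambda}X_\alpha$ and stratify the $\sigma$-product around a base point $f_0$ by the number of coordinates of disagreement; nowhere density of your $D_n$ comes from refining a given basic open set at $n+1$ fresh coordinates. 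This buys you a uniform argument with no regrouping homeomorphism, at the cost of having to refine at $n+1$ coordinates rather than one tail. Your final remark about locating the sets $V_i$ is a real point, not a pedantic one: for a non-$T_0$ factor (e.g.\ a two-point indiscrete space) no such $V_i$ exists, and indeed an infinite product of indiscrete two-point spaces has no nonempty nowhere dense subsets at all, so the proposition as literally stated fails there. The paper's own proof has the same hidden dependence (its singleton $D_0$ is nowhere dense only when the chosen base point is not dense in the product), so your explicit treatment of the $T_0$ case is a genuine improvement in care, not a flaw.
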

\begin{proof}
Note that if $E\subseteq X$ is nowhere dense then $E\times Y$ is nowhere dense in $X\times Y$. Thus the first part clearly follows. 

Now, observe that $X=\prod \{X_n:n\in\omega\}$ is $nwd$-separable for arbitrary
spaces $X_n$ with $|X_n|\ge 2$; indeed, fix some $x_i\in X_i$ for $i\in\omega$ and define 
$D_n=\{y \in X: (\forall i \geq n) (y(i)=x_i) \}$. Note that $D_n$ is nowhere dense for
each $n\in\omega$ and $\bigcup_{n<\omega} D_n$ is dense in $X$. Now consider an
arbitrary infinite product $X=\prod \{X_\alpha:\alpha<\lambda\}$ and note that
$X$ is homeomorphic to a countably infinite product.
\end{proof}

Note that finite powers can be non $nwd$-separable. A space is called an \emph{almost $P$-space} if and only if every non-empty countable intersection of open sets has non-empty interior.

\begin{obs} \label{nond}
Let $X$ be a regular countably compact almost $P$-space. Then every meager set is nowhere dense in $X$; thus $X$ is not $nwd$-separable.
\end{obs}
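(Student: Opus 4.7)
The plan is to reduce the statement to two standard facts: (i) regular countably compact spaces are Baire, and (ii) in an almost $P$-space, every dense $G_\delta$ has dense interior. Once we show that every meager set $M\subseteq X$ is nowhere dense, the failure of $nwd$-separability is automatic: a dense meager witness would be simultaneously dense and nowhere dense, which is impossible unless $X=\emptyset$.

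To show meager sets are nowhere dense, I would start with a meager set $M=\bigcup_{n<\omega} N_n$ and replace each $N_n$ by $\overline{N_n}$, so that we may assume each $N_n$ is closed and nowhere dense. Setting $U_n=X\setminus N_n$ and $G=\bigcap_{n<\omega}U_n$, each $U_n$ is open and dense, and $G$ is a $G_\delta$ whose complement contains $M$. First, I would verify that $X$ is Baire by the standard shrinking argument: given any non-empty open $V$, use regularity to pick non-empty open sets $V_n$ with $\overline{V_{n+1}}\subseteq V_n\cap U_n$ and $V_0\subseteq V$; countable compactness yields $\emptyset\neq\bigcap_n\overline{V_n}\subseteq V\cap G$. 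So $G$ is dense.

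Next, I would upgrade ``$G$ dense'' to ``$\operatorname{Int}(G)$ dense'' using the almost $P$-hypothesis. The observation is that for every non-empty open $V$, the set $V\cap G=\bigcap_n(V\cap U_n)$ is a non-empty $G_\delta$, hence has non-empty interior by the almost $P$ property. A quick check shows $\operatorname{Int}(V\cap G)=V\cap\operatorname{Int}(G)$, so $\operatorname{Int}(G)$ meets $V$. Therefore $\operatorname{Int}(G)$ is dense open, and $M\subseteq X\setminus\operatorname{Int}(G)$ is contained in a closed set with empty interior, so $M$ is nowhere dense.

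The only subtleties are bookkeeping: one must pass to the closures of the $N_n$ at the outset to produce a genuine $G_\delta$ complement, and one has to split the argument into the Baire step (giving $G$ dense) and the almost $P$ step (upgrading to $\operatorname{Int}(G)$ dense). Neither obstacle is real; the whole argument is short once these two steps are arranged in order.
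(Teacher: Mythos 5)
Your proof is correct and is essentially the paper's argument: the paper also builds a decreasing sequence of open sets with nested closures avoiding each nowhere dense set inside a given open $V$, applies countable compactness to get a nonempty intersection, and then uses the almost $P$-property to extract a nonempty open subset missing the meager set. The only difference is organizational --- you separate the Baire step from the almost-$P$ step, while the paper runs both in a single construction --- so there is nothing substantive to add.
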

\begin{proof}
 Fix nowhere dense sets $E_n\subseteq X$ for $n\in\omega$ and a nonempty open $V\subseteq X$. Construct a decreasing sequence of open sets $U_n\subseteq V\setminus E_n$ such that $\overline{U_{n+1}}\subseteq U_n$. Then $\bigcap_{n\in\omega} U_n=\bigcap_{n\in\omega} \overline{U_n}\neq \emptyset$ by $X$ being countably compact and thus there is a nonempty open $U\subseteq\bigcap_{n\in\omega} U_n$ by $X$ being an almost $P$-space. Thus $U\cap \bigcup_{n\in\omega} E_n=\emptyset$ which show that $\bigcup_{n\in\omega} E_n$ is not dense in $V$.
\end{proof}

Thus $(\omega^*)^n$ is not $nwd$-separable for $n\in\omega$ since every finite power of $\omega^*$ is a compact, almost $P$-space. J. Moore \cite{Md} showed that there is an $L$-space, i.e. hereditarily Lindel\"of, non separable space,
 with a $d$-separable square. Thus, there are non $d$-separable spaces with $d$-separable square.
Todorcevic, \cite{Tod2}, got the idea  that the  Ellentuck topology can show   the same situation for $nwd$-separability,
and his conjecture was correct:

\begin{example}The Ellentuck topology $\mathcal{X}=[\omega]^{\omega}$ is a first-countable, non $nwd$-separable space with $nwd$-separable square.
\end{example}
\begin{proof}
 Recall that the standard basis for the Ellentuck topology is $$\{[s,X]:s\in [\omega]^{<\omega}, X\in [\omega]^{\omega} \}$$ where  $[s,X]=\{ Y\in [\omega]^{\omega}: Y \text{ is an end-extension of } s, Y\setminus s\subseteq X \}$. It is well known, though not trivial, that every meager set in the Ellentuck topology is nowhere dense; thus $\mathcal{X}$ is not $nwd$-separable.

Let us construct a $D_{s,t}\subseteq \mathcal{X}^2$ for
$s,t\in[\omega]^{<\omega}$ as follows: for $A_0,A_1\in [\omega]^{\omega}$ we say
that $A_0$ and $A_1$ are \emph{merged} iff for all $i<2$ and $n,m\in A_i$ with
$n<m$ there is $k\in A_{1-i}$ such that $n<k<m$. Let 
$$D_{s,t}=\big\{(A,B)\in[s,\omega]\times[t,\omega]:A\setminus s \text{ and }
B\setminus t \text{ are merged}\big\}.$$ 
It can be easily seen that $D_{s,t}$ is
nowhere dense and that the meager set
$\bigcup\{D_{s,t}:s,t\in[\omega]^{<\omega}\}$ is dense in $\mathcal{X}^2$.
\end{proof}

Our next aim is to show that $d$-separability and $nwd$-separability are different properties even in the realm of compact spaces.

\begin{example}\label{ex:nwdnotd}
The space $X=\omega^* \times 2^\omega$ is a compact $nwd$-separable space which is not $d$-separable.
\end{example}

\begin{proof}
Indeed, let $D=\{x_n: n \in \omega \}$ be a countable dense subset of $2^\omega$. Then $\bigcup_{n<\omega} \omega^* \times \{x_n\}$ is a $\sigma$-nowhere dense and dense subset of $X$.

To see that $X$ is not $d$-separable, suppose by contradiction that 
$\{D_n: n < \omega \}$ is a countable family of discrete sets whose union is dense in $X$
and let $\mathcal{B}=\{B_m: m < \omega \}$ be a countable base for $2^\omega$,
$\tau$ be the topology of $\omega^*$ and $\pi: \omega^* \times 2^\omega \to
\omega^*$ be the projection onto the first coordinate. Let 
$$D_{nm}=\{\pi(z): z \in D_n \wedge  (\exists U \in \tau)((U \times B_m) \cap D_n=\{z\}) \}.$$

Then $D_{nm}$ is a discrete subset of $\omega^*$ and $\bigcup \{D_{nm}: (n,m) \in \omega \times \omega \}$ is dense in $\omega^*$. But this contradicts Observation $\ref{nond}$.
\end{proof}


Recall that a space $X$ is called a \emph{Corson compactum} if it is compact and there is a cardinal $\kappa$ such that $X$ can be embedded in $\Sigma(\mathbb{R}^\kappa)=\{x \in \mathbb{R}^\kappa: |\{\alpha<\kappa:x(\alpha)\neq 0\}| \leq \omega \}$. In \cite{Tkdsep} Tkachuk asked if the square of every Corson compactum is $d$-separable. We are going to show that Todorcevic's classical example of a Corson compactum is a counterexample to Tkachuk's question.

Given a tree $(T, \leq)$, we let $T \otimes T=\{(s,t): s, t \in T \wedge ht(s)=ht(t)\}$ and order $T \otimes T$ as follows $(s, t) \leq (s', t')$ if and only if $s \leq s'$ and $t \leq t'$.

Now fix a stationary co-stationary subset $A$ of $\omega_1$ and let $T$ be the tree of all countable compact subsets of $A$ ordered by $s \leq t$ if and only if $s$ is an initial part of $t$.

\begin{theorem}
(Todorcevic, \cite{Tod}) $T \otimes T$ is Baire in the final parts topology.
\end{theorem}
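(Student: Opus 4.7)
The plan is to prove Baireness directly: given dense open sets $U_n \subseteq T \otimes T$ and a basic open $V_{(s_0,t_0)} = \{(s,t) \geq (s_0,t_0)\}$, I would recursively build a chain $(s_n,t_n)_{n<\omega}$ in $T \otimes T$ with $(s_{n+1},t_{n+1}) \geq (s_n,t_n)$ and $(s_{n+1},t_{n+1}) \in U_n$, and then take a limit. The final parts topology is generated by upward-closed cones, so every open set is upward closed; hence any $(s,t)$ that extends all $(s_{n+1},t_{n+1})$ will automatically lie in every $U_n$. The entire content of the theorem is therefore to produce such an upper bound inside $T \otimes T$.

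The natural candidate for an upper bound is $s := \bigcup_n s_n \cup \{\delta\}$ and $t := \bigcup_n t_n \cup \{\delta\}$, where $\delta = \sup_n \max s_n = \sup_n \max t_n$. For $(s,t)$ to belong to $T \otimes T$ one needs $\delta \in A$ (so that $s,t$ are compact countable subsets of $A$) and equal heights for $s$ and $t$ (which is automatic once the chain is built in $T \otimes T$ and the same top element is added to both coordinates). The whole difficulty is thus ensuring $\delta \in A$.

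The cleanest route is an elementary submodel argument. Fix a large regular $\theta$ and a countable $M \prec H(\theta)$ containing $T$, $A$, $(U_n)_n$ and $(s_0,t_0)$ with $\delta := M \cap \omega_1 \in A$; such $M$ exists because the set of such $\delta$ is the intersection of a club with $A$, and $A$ is stationary. Pick inside $M$ an increasing sequence $\gamma_n \to \delta$ with $\gamma_n \in M$. Recursively build $(s_n,t_n) \in M$: given $(s_n,t_n)$, use density of $U_n$ to find $(s',t') \geq (s_n,t_n)$ with $(s',t') \in U_n$; then choose $\alpha \in A$ with $\alpha > \max s', \max t', \gamma_n$ (possible since $A$ is unbounded) and set $(s_{n+1},t_{n+1}) := (s' \cup \{\alpha\}, t' \cup \{\alpha\})$, which is still in $U_n$ by upward closure. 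This whole recipe is definable from parameters in $M$, so by elementarity the construction can be carried out inside $M$, in particular $(s_n,t_n) \in M$ and $\max s_n, \max t_n < \delta$ for all $n$.

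By construction, $\sup_n \max s_n = \sup_n \max t_n = \delta$, since the $\gamma_n$'s were cofinal in $\delta$. The sets $s = \bigcup_n s_n \cup \{\delta\}$ and $t = \bigcup_n t_n \cup \{\delta\}$ are then closed and bounded countable subsets of $A$ (the only potentially new limit point is $\delta$, and end-extension ensures no other new limits appear), so $s, t \in T$. Since $\mathrm{ht}(s_n) = \mathrm{ht}(t_n)$ for each $n$ and the same ordinal $\delta$ is adjoined on top, $\mathrm{ht}(s) = \mathrm{ht}(t)$, hence $(s,t) \in T \otimes T$. Finally, $(s,t) \geq (s_{n+1},t_{n+1}) \in U_n$ and $U_n$ is upward closed, so $(s,t) \in \bigcap_n U_n \cap V_{(s_0,t_0)}$, which proves Baireness. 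The main obstacle throughout is the matching of the two coordinates at the limit; it is precisely the stationarity of $A$ (and irrelevance of co-stationarity for this statement) that lets the elementary submodel argument deliver a single ordinal $\delta \in A$ that serves as the common maximum in both coordinates.
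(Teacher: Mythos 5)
Your argument is correct. Note that the paper does not prove this statement at all --- it is quoted from Todorcevic's Handbook chapter \cite{Tod} --- so there is no internal proof to compare against; what you have written is essentially the standard (and, as far as I know, Todorcevic's original) argument: the only obstruction to closing off a fusion sequence is that the common supremum of the two coordinates must land in $A$, and stationarity of $A$ is exactly what lets you prescribe that supremum in advance, via a countable $M \prec H(\theta)$ with $\delta = M\cap\omega_1 \in A$ (equivalently, via a club of ``closure points'' of a function choosing extensions into the $U_n$). All the delicate points are handled: each step of the recursion reflects into $M$ so that $\max s_n, \max t_n < \delta$, the interleaved $\gamma_n$ force the suprema up to $\delta$, end-extension guarantees that $\bigcup_n s_n \cup \{\delta\}$ is closed and bounded (hence compact) and contained in $A$, and the heights of the two limits agree because the heights were matched at every finite stage and the same top point is adjoined to both. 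Two small points you should make explicit in a final write-up: (i) the witnesses $(s',t')\in U_n$ and the ordinal $\alpha\in A$ must each be chosen \emph{inside} $M$ by elementarity (the sequence $(\gamma_n)$ and the full recursion live outside $M$, only the individual terms lie in $M$); and (ii) you correctly observe, and should keep, the remark that only stationarity of $A$ is used here --- co-stationarity enters the paper only later, to see that all paths of $T$ are countable.
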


Let $P(T)$ be the set of all paths in $T$ with the topology inherited from $2^T$. It is easy to see that $P(T)$ is closed and thus $P(T)$ is compact. From the fact that $A$ is co-stationary it follows that every path is countable in $T$ and hence $P(T)\subseteq \Sigma(2^T)$. Thus $P(T)$ is a Corson compactum.

\begin{theorem}
The square of Todorcevic's Corson Compactum $P(T)$ is not $nwd$-separable (hence not $d$-separable either).
\end{theorem}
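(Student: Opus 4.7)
My plan is to derive the statement directly from Todorcevic's theorem on $T\otimes T$. For each $t\in T$ write $[t]=\{p\in P(T):t\in p\}$; these are nonempty clopen subsets of $P(T)$ with $[s']\subseteq[s]$ whenever $s\leq s'$. The first order of business is to verify that
\[ \mathcal{B} = \{[s]\times[t]:(s,t)\in T\otimes T\} \]
is a $\pi$-base for $P(T)^2$. A basic clopen $V$ of $2^T\times 2^T$ meeting $P(T)^2$ is specified on each coordinate by finitely many nodes that must lie on the path and finitely many forbidden ones; the ``must-contain'' nodes lie on a common chain (since paths in $T$ are chains), so one can take their maximum $s^*$ and extend it inside $T$ by choosing a successor point $\alpha\in A$ above $\sup s^*$, different from the first elements witnessing the finitely many forbidden nodes that sit above $s^*$. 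Such $\alpha$ exists because $A$ is uncountable, and the extended node becomes incomparable with every forbidden node. The same recipe on the second coordinate, together with one more round of extension, produces a pair $(s,t)\in T\otimes T$ with equal heights and $[s]\times[t]\subseteq V$.

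Now assume toward contradiction that $\{E_n:n<\omega\}$ is a family of nowhere dense subsets of $P(T)^2$ whose union is dense, and set
\[ U_n = \{(s,t)\in T\otimes T:([s]\times[t])\cap E_n=\emptyset\}. \]
If $(s,t)\in U_n$ and $(s',t')\geq(s,t)$ in $T\otimes T$, then $[s']\times[t']\subseteq[s]\times[t]$ still misses $E_n$, so $U_n$ is open in the final parts topology. For density, given any $(s_0,t_0)\in T\otimes T$, the nowhere-denseness of $E_n$ yields a nonempty open $W\subseteq [s_0]\times[t_0]$ disjoint from $\overline{E_n}$; applying the $\pi$-base claim inside $W$ produces some $(s,t)\geq(s_0,t_0)$ in $T\otimes T$ with $[s]\times[t]\subseteq W$, so $(s,t)\in U_n$.

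The cited Baireness theorem then gives a point $(s,t)\in\bigcap_{n<\omega}U_n$, and $[s]\times[t]$ is a nonempty open subset of $P(T)^2$ disjoint from $\bigcup_n E_n$, contradicting its density. The step I expect to require the most care is the $\pi$-base claim: one has to simultaneously honor the ``forbidden node'' constraints on both coordinates and land in $T\otimes T$, which means matching heights of the two components. Both tasks should be manageable using the tree structure of Todorcevic's tree together with the uncountability of $A$ at every relevant level, but they demand explicit combinatorial manipulation rather than a black-box appeal.
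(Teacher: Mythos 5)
Your argument is correct and follows essentially the same route as the paper: you form the sets $W_n=\{(s,t)\in T\otimes T:([s]\times[t])\cap E_n=\emptyset\}$, show they are open dense in the final parts topology, and apply Todorcevic's Baireness theorem to get a basic open rectangle missing $\bigcup_n E_n$. The only difference is that you spell out the verification that $\{[s]\times[t]:(s,t)\in T\otimes T\}$ is a $\pi$-base for $P(T)^2$ (using that $T$ consists of countable compact subsets of the uncountable set $A$), a fact the paper simply asserts.
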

\begin{proof}
 Given $t \in T$, we let $U_t$ be the set of all paths passing through $t$. We note that $\{U_s \times U_t: (s,t) \in T \otimes T \}$ is a $\pi$-base for $P(T) \times P(T)$.

Now suppose that $P(T)^2$ has a dense set of the form $\bigcup_{n<\omega} D_n$, where each $D_n$ is nowhere dense.

\noindent {\bf Claim.} $W_n=\{(s,t) \in T \otimes T: (U_s \times U_t) \cap D_n=\emptyset\}$ is open dense in the final parts topology on $T \otimes T$.

\begin{proof}[Proof of Claim]To prove that $W_n$ is open, note that if $(U_s \times U_t) \cap D_n=\emptyset$ and $(s', t') \geq (s,t)$ then we also have $(U_{s'} \times U_{t'}) \cap D_n=\emptyset$.

To prove that $W_n$ is dense, let $(s,t) \in T \otimes T$ be arbitrary and note that $(U_s \times U_t )\setminus \overline{D_n}$ is a non-empty open  set, so we can find $s'$ and $t'$ such that
 $U_{s'} \times U_{t'} \subset (U_s \times U_t) \setminus \overline{D_n}$. But then $W_n \ni (s', t') \geq (s,t)$.
\end{proof}

By the Baire property of $T \otimes T$ we can choose $(s,t) \in \bigcap_{n<\omega} W_n$, but then we see that $(U_s \times U_t) \cap (\bigcup_{n<\omega} D_n)=\emptyset$, which is a contradiction.
\end{proof}

\section{Selective properties} \label{sel}

The following properties are the first selection principles for dense sets to have been considered and were introduced in \cite{S} under a different name.

\begin{definition}
A space is called {\em $M$-separable} ({\em $R$-separable}), if given a sequence $\{D_n: n < \omega \}$ of dense sets there are finite (one-point) sets $F_n \subset D_n$ such that $\bigcup_{n<\omega} F_n$ is dense in $X$.
\end{definition}

The standard way of constructing a $M$-separable non-$R$-separable space uses function spaces via the following theorem.

\begin{theorem}[{\cite[Theorems 21 and 57]{BBM}}]
Let $X$ be a Tychonoff Space. Then $C_p(X)$ is $M$-separable ($R$-separable) if and only if $C_p(X)$ is separable and $X^n$ is Rothberger (Menger) for every $n < \omega$. 
\end{theorem}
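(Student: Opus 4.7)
The plan is to identify $M$-separability (respectively $R$-separability) of $C_p(X)$ with the conjunction of separability and \emph{countable fan tightness} (respectively \emph{countable strong fan tightness}), and then invoke the classical theorems of Arhangel'skii and Sakai, which characterize those local selection properties by the Menger (respectively Rothberger) property of every finite power of $X$. Recall that $C_p(X)$ has countable fan tightness if for every $f\in C_p(X)$ and every sequence $\{A_n:n<\omega\}$ with $f\in \overline{A_n}$ there are finite $F_n\subseteq A_n$ with $f\in \overline{\bigcup_n F_n}$, and the strong version demands $|F_n|=1$. I will treat the $M$-separable case; the $R$-separable case is entirely parallel, with singletons replacing finite sets. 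That $M$-separability of any space implies separability is immediate (take $D_n = C_p(X)$ for every $n$), so the heart of the argument is the equivalence, for $C_p(X)$, between $M$-separability and separability plus countable fan tightness. This equivalence rests on the fact that $C_p(X)$ is a topological vector space, hence a homogeneous topological group.

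For the direction ``separable plus countable fan tightness implies $M$-separable'', fix a countable dense set $\{g_k:k<\omega\}\subseteq C_p(X)$ and a partition $\omega=\bigsqcup_{k<\omega} I_k$ into infinite pieces. Given dense sets $\{D_n:n<\omega\}$, each $D_n$ contains every $g_k$ in its closure; applying countable fan tightness at $g_k$ to $\{D_n:n\in I_k\}$ produces finite $F_n\subseteq D_n$ ($n\in I_k$) with $g_k \in \overline{\bigcup_{n\in I_k} F_n}$. Each $g_k$ then lies in the closure of $\bigcup_{n<\omega} F_n$, so this union is dense.

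For the converse, by homogeneity it suffices to verify fan tightness at $0$. Given $\{A_n\}$ with $0\in \overline{A_n}$, the translated unions $D_n := \bigcup_{k<\omega}(g_k+A_n)$ are dense, so $M$-separability yields finite $\widetilde F_n \subseteq D_n$ whose union is dense; translating each element back to its $A_n$-preimage gives finite $E_n\subseteq A_n$. The main obstacle is arranging the bookkeeping so that $0\in \overline{\bigcup_n E_n}$: a naive translate may land in a neighborhood of some $g_k$ without landing in a prescribed neighborhood of $0$. One resolves this by partitioning $\omega$ along a countable decreasing neighborhood base $\{U_m\}$ of $0$, using in block $m$ only those $g_k$'s whose relevant pairwise differences lie in $U_m$, so that density of $\bigcup_n \widetilde F_n$ near $g_k$ translates into $E_n$ meeting $U_m$ for large enough $m$. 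Once this group-theoretic equivalence is in place, the theorem follows by quoting Arhangel'skii (countable fan tightness of $C_p(X)$ iff every $X^n$ is Menger) and Sakai (the Rothberger analogue).
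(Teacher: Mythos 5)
The paper does not prove this result; it is quoted from \cite{BBM}, so there is no internal proof to compare against. Two preliminary remarks. First, your pairing ($M$-separable $\leftrightarrow$ Menger, $R$-separable $\leftrightarrow$ Rothberger) is the correct one; the parentheticals in the paper's statement are transposed, as one sees from the subsequent discussion of $X=2^\omega$. Second, your sufficiency direction is fine: the argument that separability plus countable (strong) fan tightness implies $M$- ($R$-) separability, via a partition of $\omega$ into infinitely many infinite blocks, one for each point of a countable dense set, is the standard one and is complete as written; combined with the Arhangel'skii and Sakai characterizations it gives half of the theorem.

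The genuine gap is in the necessity direction, i.e.\ in your claim that $M$-separability of $C_p(X)$ implies countable fan tightness at $0$. Your proposed repair of the translation problem explicitly invokes ``a countable decreasing neighborhood base $\{U_m\}$ of $0$.'' No such base exists unless $X$ is countable: the character of $C_p(X)$ at every point equals $|X|$, and the interesting instances of the theorem (e.g.\ $X=2^\omega$) have $X$ uncountable. The same obstruction kills any variant using countable $\pi$-character at $0$. The difficulty you correctly identified --- that a point of $\widetilde F_n$ lying near $g_k$ may be a translate $g_j+a$ with $j\neq k$, so that $a$ lands near $g_k-g_j$ rather than near $0$ --- is real, and it cannot be resolved by local bookkeeping inside the group $C_p(X)$. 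The standard proof (and the one in \cite{BBM}) instead routes this direction through $X$: given a sequence of $\omega$-covers $\mathcal U_n$ of $X$, the sets $D_n=\{f\in C_p(X): f\equiv 1 \text{ on } X\setminus U \text{ for some } U\in\mathcal U_n\}$ are dense (this is where Tychonoff-ness of $X$ is used); applying $M$-separability and reading off the witnessing $U$'s produces a selection verifying $S_{fin}(\Omega,\Omega)$, which is equivalent to all finite powers of $X$ being Menger, and only then does one return to fan tightness via Arhangel'skii. Until you replace your group-theoretic bookkeeping with an argument of this kind (or an actual proof that $M$-separable separable topological groups have countable fan tightness), the necessity direction is not established.
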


Then it would suffice to take $X=2^\omega$ in the above theorem. The Cantor set is, in fact, known to be Menger, but not Rothberger. Indeed, any Rothberger subset of the reals has strong measure zero.

We would like to show an alternative, more combinatorial, construction of an $M$-separable non-$R$-separable space. We will use this example later to answer a question of Hutchison.

\begin{example} \label{countex}
A countable $M$-separable non-$R$-separable space $X$.
\end{example}

\begin{proof}
Let $X=Fn(\omega, \omega;\omega)$, that is the set of all finite partial functions from $\omega$ to $\omega$. Provide $X$ with the following topology. A basic neighborhood of the point $F \in X$ is a set of the form $$V(F, \mathcal{F})=\{G \in X: G \supset F \wedge (\forall f \in \mathcal{F})(\forall n \in \dom{G} \setminus \dom{F})(G(n) \neq f(n))\}$$
where $\mathcal{F} \in [\omega^\omega]^{<\omega}$.

\newClaim
\begin{Claim}
$X$ is not $R$-separable. 
\end{Claim}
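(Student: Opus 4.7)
The plan is to exhibit a countable sequence of dense sets such that any Rothberger-style selection of singletons can be blocked by a single basic open neighborhood built diagonally from the selection. For each $n<\omega$, I would take
\[
D_n=\{G\in X:n\in\dom G\}.
\]

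First, I would verify that each $D_n$ is dense. Given a basic open set $V(F,\mathcal{F})$: if $n\in\dom F$ then $F$ itself lies in $D_n\cap V(F,\mathcal{F})$; otherwise, since $\mathcal{F}\in[\omega^\omega]^{<\omega}$ is finite, one can pick $k\in\omega$ avoiding the (finitely many) values $\{f(n):f\in\mathcal{F}\}$, and then $F\cup\{(n,k)\}\in D_n\cap V(F,\mathcal{F})$.

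Next, suppose for contradiction that $G_n\in D_n$ are chosen so that $E=\{G_n:n<\omega\}$ is dense in $X$. I would define $f\in\omega^\omega$ by $f(n)=G_n(n)$, which is well defined because $n\in\dom G_n$ by membership in $D_n$. Then I would consider the basic open neighborhood
\[
V(\emptyset,\{f\})=\{G\in X:(\forall m\in\dom G)\,G(m)\neq f(m)\}.
\]
Each $G_n$ fails this condition at $m=n$, since $G_n(n)=f(n)$, hence $G_n\notin V(\emptyset,\{f\})$ for every $n$. Thus $E\cap V(\emptyset,\{f\})=\emptyset$, contradicting the density of $E$ and showing that $X$ is not $R$-separable.

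There is essentially no hard step here; the topology on $X$ has been engineered so that a single function $f\in\omega^\omega$ generates a basic open set that simultaneously excludes every finite partial function agreeing with $f$ somewhere, and the diagonal $f(n):=G_n(n)$ is precisely what converts the membership condition $n\in\dom G_n$ into the obstruction to density.
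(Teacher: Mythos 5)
Your proof is correct and follows exactly the paper's argument: the same dense sets $D_n=\{G\in X:n\in\dom G\}$, the same diagonal function $f(n)=G_n(n)$, and the same blocking neighborhood $V(\emptyset,\{f\})$. The only difference is that you spell out the density of $D_n$ and the exclusion of each $G_n$, which the paper leaves as "easily seen."
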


\begin{proof}[Proof of Claim 1] Let $D_n=\{F \in X: n \in \dom{F}\}$. Then $D_n$ is dense in $X$. Suppose by contradiction that we can find points $F_n \in D_n$ such that $D=\{F_n: n < \omega \}$ is dense in $X$. Let $f \in \omega^\omega$ be the function defined by $f(n):=F_n(n)$. Then $V(\emptyset, \{f\})$ is easily seen to miss $D$.
\renewcommand{\qedsymbol}{$\triangle$}
\end{proof}

\begin{Claim}
  Let $D \subset X$ and $k <\omega$ be such that:
\begin{equation} \label{maincond}
D \cap V(\emptyset, \mc F) \neq \emptyset \text{ for each } \mathcal{F} \in [\omega^\omega]^k.
\end{equation}
Then there is a finite subset $D'$ of $D$ such that :
$$D' \cap V(\emptyset, \mathcal{F}) \neq \emptyset \text{ for each } \mathcal{F} \in [\omega^\omega]^k.$$
\end{Claim}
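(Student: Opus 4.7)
The plan is to argue by contradiction. Since $X = Fn(\omega,\omega;\omega)$ is countable, so is $D$, and I would enumerate $D = \{F_m : m < \omega\}$. Assuming no finite subset of $D$ satisfies the conclusion, in particular no initial segment $\{F_0, \ldots, F_{m-1}\}$ does; for each $m < \omega$ I would pick $\mathcal{F}^m = \{f^m_1, \ldots, f^m_k\} \in [\omega^\omega]^k$ with $F_i \notin V(\emptyset, \mathcal{F}^m)$ for every $i < m$. The goal is to extract from the sequence $(\mathcal{F}^m)_{m<\omega}$ a single $\mathcal{F}\in[\omega^\omega]^k$ with $D\cap V(\emptyset,\mathcal{F}) = \emptyset$, directly contradicting \eqref{maincond}.

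By definition of $V$, for each pair $i<m$ there exist a coordinate $j(i,m) \in \{1,\ldots,k\}$ and an index $n(i,m) \in \dom F_i$ such that $F_i(n(i,m)) = f^m_{j(i,m)}(n(i,m))$. For fixed $i$, the pair $(j(i,m), n(i,m))$ ranges over the \emph{finite} set $\{1,\ldots,k\} \times \dom F_i$ as $m$ varies. A standard K\"onig-style diagonal stabilization then produces an infinite $M\subseteq\omega$ together with pairs $(j_i,n_i) \in \{1,\ldots,k\}\times \dom F_i$ such that $(j(i,m), n(i,m)) = (j_i, n_i)$ for every $i$ and every $m \in M$ with $m > i$.

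With these stabilized coordinates in hand, I would define $f_1,\ldots,f_k \in \omega^\omega$ by declaring $f_{j_i}(n_i) := F_i(n_i)$ for each $i<\omega$ and extending arbitrarily elsewhere. The main obstacle --- really the only delicate point --- is checking coherence of this declaration: if $(j_i,n_i) = (j_{i'},n_{i'})$, then for any sufficiently large $m \in M$ one has
$$F_i(n_i) = f^m_{j_i}(n_i) = f^m_{j_{i'}}(n_{i'}) = F_{i'}(n_{i'}),$$
so there is no conflict. If the resulting $f_j$'s fail to be pairwise distinct, I would pad $\{f_1,\ldots,f_k\}$ with arbitrary extra distinct functions to form a genuine element $\mathcal{F}\in[\omega^\omega]^k$, which only shrinks $V(\emptyset, \mathcal{F})$. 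By construction, for each $i < \omega$ the point $n_i\in\dom F_i$ witnesses $F_i(n_i) = f_{j_i}(n_i)$, so $F_i\notin V(\emptyset,\mathcal{F})$; hence $D\cap V(\emptyset,\mathcal{F}) = \emptyset$, contradicting \eqref{maincond} and completing the argument.
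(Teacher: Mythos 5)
Your proof is correct, and it takes a genuinely different (more elementary) route than the paper's. The paper argues ``softly'': it puts the cofinite topology $T$ on $\omega$, observes that for each $F\in X$ the set $W(F)$ of $k$-tuples $(f_1,\dots,f_k)$ with $f_j(n)\neq F(n)$ for all $j$ and all $n\in\dom F$ is open in the compact product $(T^\omega)^k$, notes that hypothesis \eqref{maincond} says precisely that $\{W(F):F\in D\}$ covers $(\omega^\omega)^k$, and extracts a finite subcover. Your argument is the hands-on combinatorial unwinding of that compactness: assuming there is no finite subcover, you diagonalize the witnessing families $\mathcal{F}^m$ to manufacture a single tuple lying outside every $W(F_i)$, i.e.\ a single $\mathcal{F}$ with $D\cap V(\emptyset,\mathcal{F})=\emptyset$. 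You correctly isolate the one delicate point --- coherence of the declarations $f_{j_i}(n_i):=F_i(n_i)$ --- and your stabilization handles it. Two minor remarks that do not affect correctness: the standard nested-subsequence construction gives stabilization for \emph{all but finitely many} $m\in M$ rather than literally for every $m\in M$ with $m>i$, but that is all your coherence check uses (and the final step $F_i(n_i)=f_{j_i}(n_i)$ holds by definition, needing no $m$ at all); and the padding at the end is legitimate since enlarging $\mathcal{F}$ only shrinks $V(\emptyset,\mathcal{F})$. What the paper's version buys is brevity and independence from the countability of $D$ (irrelevant here, as $X$ is countable); what yours buys is a self-contained argument that makes the combinatorial content of the compactness explicit.
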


\begin{proof}[Proof of Claim 2]
For $F \in X$ let 
$$W(F)=\{(f_1, \dots f_k) \in (\omega^\omega)^k: \forall 1 \leq j \leq k\ (\forall i <|F|)\ f_j(i) \neq F(i) \}.$$ 
Let $T$ be the cofinite topology on $\omega$. Then $T$ is compact, and $W(F)$ is an open subset of the compact space $(T^k)^\omega$.  

By ($\ref{maincond}$), the set $\{W(F): F \in D \}$ is an open cover of $(T^\omega)^k$. So there is a finite set $D' \subset D$ such that $\{W(F): F \in D' \}$ covers $(T^\omega)^k$. Then $D'$ satisfies the requirements of the Claim.
\renewcommand{\qedsymbol}{$\triangle$}
\end{proof}

\begin{Claim}
 $X$ is $M$-separable.
\end{Claim}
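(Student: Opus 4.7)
The plan is to apply a basepoint-generalized version of Claim 2 once for each pair $(F,k) \in X \times \omega$, using a distinct dense set $D_n$ each time. Since $X$ is countable, enumerate $X \times \omega = \{(G_n, k_n) : n < \omega\}$ so that every pair appears. For each $n$ I aim to extract a finite $F_n \subseteq D_n$ with the property that $F_n \cap V(G_n, \mathcal{F}) \neq \emptyset$ for every $\mathcal{F} \in [\omega^\omega]^{k_n}$. Then $\bigcup_{n<\omega} F_n$ will be dense: given any basic open set $V(G, \mathcal{F})$, set $k = |\mathcal{F}|$ and pick $n$ with $(G_n, k_n) = (G, k)$, so that $F_n$ meets $V(G, \mathcal{F})$ by construction.

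Claim 2 as stated only deals with neighborhoods of $\emptyset$, but the argument carries over verbatim to an arbitrary basepoint $G_n \in X$. For $H \in D_n \cap V(G_n, \emptyset)$ define
$W_{G_n}(H) = \{(f_1, \dots, f_{k_n}) \in (\omega^\omega)^{k_n} : \forall j,\ \forall i \in \dom H \setminus \dom G_n,\ f_j(i) \neq H(i)\}$.
Each $W_{G_n}(H)$ is open in the compact product $(T^\omega)^{k_n}$, where $T$ is the cofinite topology on $\omega$. The density of $D_n$ provides, for every tuple $(f_1, \dots, f_{k_n})$, some $H \in D_n \cap V(G_n, \{f_1, \dots, f_{k_n}\})$, that is, some $H$ for which $(f_1, \dots, f_{k_n}) \in W_{G_n}(H)$. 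Hence $\{W_{G_n}(H) : H \in D_n \cap V(G_n, \emptyset)\}$ is an open cover of $(T^\omega)^{k_n}$, and a finite subcover delivers the required finite $F_n \subseteq D_n$.

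The main obstacle is essentially just the bookkeeping observation that Claim 2 is robust under shifting its basepoint from $\emptyset$ to an arbitrary $G \in X$; once this generalization is spelled out, the remainder is a routine diagonal enumeration over $X \times \omega$. No new technical ingredient is needed beyond the compactness of $(T^\omega)^k$ in the cofinite product topology, which already drove Claim 2.
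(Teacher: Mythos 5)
Your proposal is correct and follows essentially the same route as the paper: enumerate $X\times\omega$, extract for each $n$ a finite subset of $D_n$ hitting every $V(F_n,\mathcal F)$ with $|\mathcal F|=k_n$ via the compactness of $(T^\omega)^{k_n}$, and take the union. The only difference is cosmetic: you re-run the compactness argument with the basepoint shifted from $\emptyset$ to $G_n$, while the paper applies Claim 2 to $\{F\in D_n: F_n\subset F\}$ and leaves that relativization implicit.
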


\begin{proof}[Proof of Claim 3] 
Enumerate $X \times \omega$ as $\{(F_n, k_n): n < \omega \}$. Using Claim 2 we choose, for each $n<\omega$, a finite subset $D'_n$ of the set $\{F \in D_n: F_n \subset F \}$ such that 
$$D'_n \cap V(F_n, \mathcal{F}) \neq \emptyset \text{ for each } \mathcal{F} \in [\omega^\omega]^{k_n}.$$
Then $D=\bigcup \{D'_n: n \in \omega \}$ is dense.  Indeed, if $F\in X$ and $\mc F\in \br \omega^\omega;<\omega;$, then pick $n\in \omega$ with $F_n=F$ and $k_n=k$.
Then there is $d\in D'_n\subs D$ with $d\in V(F,\mc F)$.
\renewcommand{\qedsymbol}{$\triangle$}
\end{proof}
\end{proof}

In \cite{GNP}, Gruenhage, Natkaniec and Piotrowski say that a space $X$ satisfies property $(GC)$ if there is a disjoint, countable collection $\mathcal{N}$ of nowhere dense sets such that, for every non-empty open set $U \subset X$ we have $|\{N \in \mathcal{N}: U \cap N = \emptyset \}|<\omega$. In her PhD thesis \cite{Hutch} Hutchison proves that every space having a dense metrizable subset satisfies $(GC)$ and that property $(GC)$ is equivalent to having a $\sigma$-disjoint $\pi$-base in the realm of linearly ordered topological spaces. This notion is strictly intertwined with the notion of \emph{groupable} dense set, which is the basis for another selective version of separability.

\begin{definition}
A dense set $D \subset X$ is called \emph{groupable} if it admits a partition $\mathcal{A}=\{A_n: n < \omega \}$ into finite sets such that every open subset of $X$ meets all but finitely elements of $\mathcal{A}$.
\end{definition}

\begin{definition}
A topological space $X$ is called $(GN)$-separable (from Gerlits and Nagy) if for every sequence $\{D_n: n < \omega \}$ of dense sets there are points $d_n \in D_n$ such that $\{d_n: n < \omega \}$ is a groupable dense set.

We say that $X$ is {\em H-separable} if for each sequence $\{D_n: n < \omega \}$ of dense sets, one can pick finite sets  $F_n \subs D_n$ so that for every nonempty open set $O \subset X$, the intersection $O \cap F_n$ is nonempty for all
but finitely many $n$.
\end{definition}

$GN$-separability was introduced by Di Maio, Ko\v cinac and Meccariello \cite{DKM} under the name of \emph{selection principle $S_1(\mathcal{D}, \mathcal{D}^{gp})$} while $H$-separability was introduced by Bella, Bonanzinga and Matveev in \cite{BBM}.

Clearly, every space having a groupable dense set satisfies property $(GC)$, and actually, a space has a groupable dense set if and only if it satisfies property $(GC)$ witnessed by a collection of finite sets. 
Hutchison asked if one could add $(GN)$-separability to this equivalence, for the class of countable spaces. As a partial result, she noted that in a space satisfying $(GC)$ witnessed by a collection of finite sets every dense set is groupable. 
We are going to give a negative answer to her question.
Actually, we can prove a bit more (see Figure 1 on the relationship the properties we defined above).


\begin{figure}[h!]

\begin{tikzpicture}
[pro/.style={ inner sep=4pt,minimum size=4mm,draw=white},back line/.style={densely dotted},
        normal line/.style={-stealth},
        cross line/.style={normal line,
           preaction={draw=white, -, 
           line width=6pt}},
]

\matrix[row sep=10mm,column sep=15mm]{

\snode {pomega}{${\pi w}(X)={\omega}$};&\snode {Hsep}{H-separable}; \\

\snode {GNsep}{GN-separable}; & 
;& 
\snode {edegru}{$\exists$ dense groupable};\\

\snode {R-separable}{R-separable}; 
& \snode {M-separable}{M-separable};& \snode {separable}{separable};\\
};

\path[->]

  (pomega) edge[thick] (GNsep) 
  (pomega) edge[thick] (Hsep)

  (GNsep) edge[thick] (R-separable) 

  (GNsep) edge[thick] (edegru) 
  (R-separable) edge[thick] (M-separable)
  (M-separable) edge[thick] (separable)

(edegru)  edge[thick] (separable)
(Hsep) edge[cross line]  (M-separable)
(Hsep) edge[thick]  (edegru)

;

\end{tikzpicture}
\caption{}
\end{figure}



\begin{theorem}
There is a  countable, H-separable, non-$R$-separable space $X$.
\end{theorem}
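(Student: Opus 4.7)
The plan is to reuse the very same space $X=Fn(\omega,\omega;\omega)$ and topology from Example~\ref{countex}; Claim 1 there already witnesses non-$R$-separability, so the only new work is to upgrade the $M$-separability proof (Claim 3) to a proof of $H$-separability.

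The first step is a mild generalization of Claim 2: for every $F_0\in X$ and $k<\omega$, if $D\subseteq X$ satisfies $D\cap V(F_0,\mathcal{F})\neq\emptyset$ for each $\mathcal{F}\in[\omega^\omega]^k$, then some finite $D'\subseteq D$ enjoys the same property. This is the original argument applied to $\{F\in D:F\supseteq F_0\}$ after redefining $W(F)$ so that it quantifies over coordinates in $\dom(F)\setminus\dom(F_0)$ rather than $\dom(F)$; compactness of $((\omega,\text{cofinite})^\omega)^k$ still yields a finite subcover. (A minor point: if the tuple $(f_1,\dots,f_k)$ has repetitions, enlarge $\{f_1,\dots,f_k\}$ to a $k$-element $\mathcal{F}'$ before invoking the hypothesis, noting that $V(F_0,\mathcal{F}')\subseteq V(F_0,\{f_1,\dots,f_k\})$.)

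With this in hand, fix once and for all an enumeration $X=\{G_m:m<\omega\}$ and consider a sequence $\{D_n:n<\omega\}$ of dense sets. At stage $n$, for each $m\leq n$, density of $D_n$ makes the generalized Claim 2 applicable at base point $G_m$ with exponent $k=n$, producing a finite $D'_{n,m}\subseteq D_n$ such that $D'_{n,m}\cap V(G_m,\mathcal{F})\neq\emptyset$ for every $\mathcal{F}\in[\omega^\omega]^n$. Set $F_n=\bigcup_{m\leq n}D'_{n,m}$, a finite subset of $D_n$.

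Since basic opens form a basis, it suffices, for $H$-separability, to check that each basic open $V(G_{m_0},\mathcal{F}_0)$ meets $F_n$ cofinitely in $n$. For any $n\geq\max(m_0,|\mathcal{F}_0|)$, enlarge $\mathcal{F}_0$ to some $\mathcal{F}\in[\omega^\omega]^n$; then $V(G_{m_0},\mathcal{F})\subseteq V(G_{m_0},\mathcal{F}_0)$, and $F_n\supseteq D'_{n,m_0}$ meets $V(G_{m_0},\mathcal{F})$, so $F_n\cap V(G_{m_0},\mathcal{F}_0)\neq\emptyset$ as required. The main ``obstacle'' is really just the bookkeeping of the diagonalization --- each pair $(m,k)$ must eventually fall within the range treated at some stage, which is arranged by the threshold $n\geq\max(m,k)$, while the compactness argument behind Claim 2 does all the genuine work.
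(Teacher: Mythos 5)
Your proposal is correct and follows essentially the same route as the paper: the same space $X=Fn(\omega,\omega;\omega)$, non-$R$-separability from Claim 1, and $H$-separability via a diagonalization over the (implicitly relativized) compactness Claim 2, selecting at stage $n$ finite sets anchored at each of the first $n$ base points. The only cosmetic difference is bookkeeping — the paper enumerates pairs $(F_m,k_m)\in X\times\omega$ and uses exponent $k_m$, while you enumerate $X$ alone, take exponent $n$ at stage $n$, and absorb the size of $\mathcal{F}_0$ via the monotonicity $V(F,\mathcal{F})\subseteq V(F,\mathcal{F}_0)$ for $\mathcal{F}\supseteq\mathcal{F}_0$; both work, and your explicit statement of the relativized Claim 2 (and of the repeated-coordinates point in the compactness argument) is a tidiness improvement rather than a new idea.
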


\begin{proof}
Let $X$ be the space from Example $\ref{countex}$. 

Assume that   $\{D_n: n < \omega \}$ is a sequence of dense sets.

Enumerate $X \times \omega$ as $\{(F_m, k_m): m < \omega \}$. Using Claim 2  from Example $\ref{countex}$ we choose, for each $n<\omega$ and $m\le n$  , a finite subset $D^n_m$ of the set $\{F \in D_n: F_m \subset F \}$ such that 
$$D^n_m \cap V(F_m, \mathcal{F}) \neq \emptyset \text{ for each } \mathcal{F} \in [\omega^\omega]^{k_m}.$$
Let $D_n=\bigcup\{D^n_m:m\le n\}$.

If $(F,\mc F)$ is a basic open set, $F_m=F$ and $k_m=|\mc F|$, then 
$(F,\mc F)\cap D^n_m\ne \empt$ for $m\le n$, and so 
$(F,\mc F)\cap D_m\ne \empt.$

Thus $X$ is $H$-separable.
\end{proof}

\begin{definition}
We say that a  set $X\subs 2^{\omega_1}$ is  a {\em very strong $HFC$} iff
for each sequence $\{A_n:n\in \omega\}$ of pairwise disjoint, non-empty finite subsets of $X$
there is $\beta<\omega_1$ such that for all $s\in Fn(\omega_1\setm \beta, 2;\omega;\omega)$
there are infinitely many $n$ with $A_n\subs [s]$, where
$[s]=\{x\in  2^{\omega_1}:s\subs x\}$.  
\end{definition}

The following proposition is straightforward.
\begin{proposition}
A  very strong $HFC$ can not contain a groupable dense set.
\end{proposition}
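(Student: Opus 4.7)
The plan is to argue by contradiction. Suppose $X\subseteq 2^{\omega_1}$ is a very strong $HFC$ and $D\subseteq X$ is a groupable dense set witnessed by a partition $\mathcal{A}=\{A_n:n\in\omega\}$ of $D$ into pairwise disjoint, nonempty finite sets, so that every nonempty open subset of $X$ meets all but finitely many of the $A_n$. The goal is to exhibit a single nonempty open subset of $X$ which is disjoint from infinitely many of the $A_n$.

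First I would apply the very strong $HFC$ property directly to the sequence $\{A_n:n\in\omega\}$ (which is of the required form: pairwise disjoint nonempty finite subsets of $X$) to obtain an ordinal $\beta<\omega_1$ such that for every finite partial function $s\in Fn(\omega_1\setminus\beta,2;\omega)$, the set $\{n\in\omega:A_n\subseteq[s]\}$ is infinite. Next I would pick any $\gamma\in\omega_1\setminus\beta$ and consider the two complementary basic open sets given by $s_i=\{(\gamma,i)\}$ for $i<2$; the traces $U_i=[s_i]\cap X$ are disjoint open subsets of $X$. By the choice of $\beta$ applied to $s_0$, infinitely many $A_n$ are contained in $[s_0]$, hence disjoint from $U_1$. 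Provided $U_1$ is nonempty, this directly contradicts groupability, since $U_1$ then meets only finitely many elements of $\mathcal{A}$.

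The one step that requires any real care — and the main obstacle — is verifying that $U_1$ (and symmetrically $U_0$) is actually nonempty in $X$: a priori nothing prevents $X$ from being constant in coordinate $\gamma$, which would make one of $U_0,U_1$ empty and collapse the argument. This is resolved by invoking the $HFC$ property a second time, now for $s_1$: there exist infinitely many $n$ with $A_n\subseteq[s_1]$, and since each $A_n$ is a nonempty subset of $X$, we conclude $U_1\neq\emptyset$. This double application of the $HFC$ property — once to find pieces of $\mathcal{A}$ inside $[s_0]$ that avoid $U_1$, and once to certify that $U_1$ is a legitimate nonempty open set — yields the desired contradiction.
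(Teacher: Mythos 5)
Your argument is correct, and it is precisely the ``straightforward'' proof the paper has in mind (the paper omits the proof entirely, so there is nothing to diverge from). The double application of the very strong $HFC$ property --- once to $s_0$ to produce infinitely many $A_n$ contained in $[s_0]$ and hence missing $[s_1]\cap X$, and once to $s_1$ to certify that $[s_1]\cap X$ is a nonempty open set --- correctly handles the one point that needs care.
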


\begin{theorem}\label{tm:Rsepnotgr}
In $V^{Fn(\omega\times\omega_1,2;\omega)}$
there is a countable $R$-separable space without  a groupable dense subset. 
\end{theorem}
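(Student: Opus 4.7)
The plan is to build, in the Cohen extension $V[G]$, a countable $X\subseteq 2^{\omega_1}$ which is both a very strong $HFC$---and hence has no groupable dense subset, by the preceding proposition---and $R$-separable. Let $g:\omega\times\omega_1\to 2$ be the generic function, define $x_n\in 2^{\omega_1}$ by $x_n(\alpha)=g(n,\alpha)$, and set $X=\{x_n:n\in\omega\}$ with the subspace topology from $2^{\omega_1}$. Standard density arguments show that in $V[G]$ the $x_n$'s are pairwise distinct, $X$ is dense in $2^{\omega_1}$, $X$ has no isolated points, and $[s]\cap X$ is infinite for every $s\in Fn(\omega_1,2;\omega)$; in particular a subset of $X$ is dense in $X$ iff it is dense in $2^{\omega_1}$.

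For the very-strong-$HFC$ property, take a sequence $\{A_n:n\in\omega\}$ of pairwise disjoint non-empty finite subsets of $X$ in $V[G]$ and write $A_n=\{x_k:k\in E_n\}$. By ccc there is a countable $\beta<\omega_1$ such that $\{E_n\}\in V[G_\beta]$, where $G_\beta=G\cap Fn(\omega\times\beta,2;\omega)$. Now given $s\in Fn(\omega_1\setminus\beta,2;\omega)$, $N\in\omega$, and a condition $q$, split $q=q_0\cup q_1$ at $\beta$: using the forced disjointness of the $E_n$'s, extend $q_0$ to decide $E_n$ for some $n>N$ whose $E_n$ avoids the finite set of $k$'s appearing in $\dom(q_1)$ over $\dom(s)$; then extend $q_1$ by $(k,\alpha)\mapsto s(\alpha)$ for $k\in E_n$, $\alpha\in\dom(s)$. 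The resulting condition forces $A_n\subseteq[s]$, so generically infinitely many $A_n$'s lie in $[s]$.

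For $R$-separability, given dense sets $D_n=\{x_k:k\in I_n\}\subseteq X$ in $V[G]$, use ccc to pick $\beta<\omega_1$ with $\{I_n\}\in V[G_\beta]$. Inside $V[G_\beta]$, the restriction $X\restriction\beta:=\{x_n\restriction\beta:n\in\omega\}\subseteq 2^\beta$ is countable, dense in itself, and sits in a Polish space (since $\beta$ is countable), hence has a countable $\pi$-base; each $D_n\restriction\beta$ remains dense in $X\restriction\beta$. A diagonalization over this $\pi$-base, using that each $D_n\restriction\beta$ meets each non-empty basic open set in an infinite set, produces an \emph{injective} selector $k:\omega\to\omega$ in $V[G_\beta]$ with $k(n)\in I_n$ and $\{x_{k(n)}\restriction\beta:n\in\omega\}$ dense in $X\restriction\beta$. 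It then suffices to verify that $\{x_{k(n)}:n\in\omega\}$ is dense in $2^{\omega_1}$: for $s=s_0\cup s_1$ split at $\beta$, the set $N_{s_0}=\{n:x_{k(n)}\restriction\beta\supseteq s_0\}\in V[G_\beta]$ is infinite (no isolated points in $X\restriction\beta$); by injectivity of $k$, so is $K=\{k(n):n\in N_{s_0}\}\subseteq\omega$; and a routine Cohen density argument for the forcing $Fn(\omega\times[\beta,\omega_1),2;\omega)$ over $V[G_\beta]$ supplies some $k(n)\in K$ with $x_{k(n)}\restriction\dom(s_1)=s_1$, whence $x_{k(n)}\supseteq s$.

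The main obstacle is the final \emph{lifting} step. The selector built in $V[G_\beta]$ only secures density on the first $\beta$ coordinates, while density in $X\subseteq 2^{\omega_1}$ requires matching arbitrary finite patterns on all of $\omega_1$. The key trick is to force the selector to be injective: this guarantees that for each $s_0$ the relevant index set $K$ is infinite inside $V[G_\beta]$, so the Cohen genericity of $G$ on $\omega\times[\beta,\omega_1)$ over $V[G_\beta]$ can be used to realize any prescribed finite pattern $s_1$ on the corresponding rows of the generic.
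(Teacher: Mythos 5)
Your proposal is correct and follows essentially the same route as the paper: the paper defines the very same space $X=\{x_n:n\in\omega\}$ from the generic $g$ and dismisses both the very strong $HFC$ property and (implicitly) $R$-separability as ``standard density arguments,'' which are exactly the genericity/factorization arguments you carry out. The details you supply — reflecting the data to $V[G_\beta]$, building an injective selector there so that the relevant index sets remain infinite, and then using tail-genericity to realize patterns above $\beta$ — are the intended (and needed) ones, and they check out.
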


\begin{proof}
If $\mc G$ is the generic filter, then $g=\cup\mc G$ is a function from $\omega\times \omega_1 $ to $2$.
If you define, for $n\in \omega$, the function $x_n\in {}^{\omega_1}2$ by the formula
$x_n(\alpha)=g(n,\alpha)$, then standard density arguments give that $X=\{x_n:n\in \omega\}$ is  a very strong $HFC$ which is dense in $2^{\omega_1}$.
\end{proof}

 In \cite[Example 3.2]{GS} Gruenhage and Sakai   constructed a  maximal R-separable space from CH, and it is straightforward  that such a space 
also satisfies the requirements of theorem \ref{tm:Rsepnotgr}.

\begin{question}
(1) Is there a ZFC example of an crowded R-separable space without a groupable dense subset?\\
(2) Is there a ZFC example of  a (countable) GN-separable space with uncountable
$\pi$-weight?  
\end{question}

To get a consistent example for   Question 3.10 (2), any counterexample to Malykhin's problem would do. Indeed, any countable Frechet-Uryson non-metrizable group has uncountable $\pi$-weight 
 and every countable Frechet-Urysohn space without isolated points is GN-separable, by \cite{GS}.

Our next goal is to analyze selective versions of $d$-separability and $nwd$-separability.

\begin{definition}
A space $X$ is called \emph{$D$-separable} (respectively, {\em $NWD$-separable}) if for every sequence $\{D_n: n < \omega \}$ of dense sets there are discrete (respectively, nowhere dense) sets $E_n \subset D_n$ such that $\bigcup_{n<\omega} E_n$ is dense in $X$.
\end{definition}

Observe that if $X$ is $D$-separable (respectively, $NWD$-separable) then every dense subset of $X$ is $D$-separable (respectively, $NWD$-separable) as well. Also, if $X$ is $D$-separable (respectively, $NWD$-separable) then it is also $d$-separable (respectively, $nwd$-separable).

$D$-separability was already investigated in \cite{GS} and \cite{AJR}. Let us now introduce a general framework for dealing with selection principles for dense sets.

\begin{definition}\label{gendef}
For each topological space $X$, let
$\mbb A_X\subs \mc P(X)$.
We say that $X$ is {\em $\mbb A$-separable} iff
for each sequence $\{D_n:n\in {\omega}\}$ of dense subsets of $X$
there are $A_n\in  \mc P(D_n)\cap \mbb A_X$ for $n\in {\omega}$
such that $\bigcup\{A_n:n\in {\omega}\}$ is dense in $X$. 
\end{definition}

The formulation and the proof of the following result is based on \cite[Theorem 2.2.]{GS}.
\begin{theorem}\label{tm:union}
Assume that  for each topological space $X$ we have 
$\mbb A_X\subs \mc P(X)$ such that 
\begin{enumerate}[(a)]
 \item $\mbb A_X$ is an ideal,
 \item if $Z\subs X$, then $\mbb A_Z\subs \mbb A_X$,
 \item if $U\subs X$ is open, then $\mbb A_U=\{A\cap U:A\in \mbb A_X\}$.
\end{enumerate}
Then the union of two $\mbb A$-separable spaces is $\mbb A$-separable.
\end{theorem}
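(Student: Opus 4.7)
The plan is a decompose-and-conquer argument. Given $\{D_n : n < \omega\}$ dense in $X = X_0 \cup X_1$, I would partition $\omega$ into two infinite pieces $\omega_0 \sqcup \omega_1$, and for each $i \in \{0,1\}$ use the $\mathbb{A}$-separability of $X_i$ on $\{D_n\}_{n \in \omega_i}$ after localizing to the open set $U_i = X \setminus \overline{X_{1-i}}$, which lies in $X_i$ and is open in both $X$ and $X_i$.

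For $n \in \omega_i$ the restriction $D_n \cap X_i$ need not be dense in $X_i$, so I would enlarge it to
\[\widetilde{D}_n^i = (D_n \cap X_i) \cup (X_i \setminus U_i),\]
which is easily checked to be dense in $X_i$: any nonempty open $V \subseteq X_i$ either meets $U_i$ (where $D_n$ is dense in $X$) or is contained in $X_i \setminus U_i$. Apply $X_i$'s $\mathbb{A}$-separability to obtain $A_n^i \in \mathbb{A}_{X_i}$ with $A_n^i \subseteq \widetilde{D}_n^i$ and $\bigcup_{n \in \omega_i} A_n^i$ dense in $X_i$, then restrict to $B_n := A_n^i \cap U_i$. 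The extras introduced in $\widetilde{D}_n^i$ vanish on $U_i$, so $B_n \subseteq D_n \cap U_i \subseteq D_n$; by condition (c) applied to $X_i$ and the open $U_i \subseteq X_i$, $B_n \in \mathbb{A}_{U_i}$, and by (b), $B_n \in \mathbb{A}_X$. A short check using density of $\bigcup A_n^i$ in $X_i$ gives that $\bigcup_{n \in \omega_i} B_n$ is dense in $U_i$, since for $V \subseteq U_i$ nonempty open in $X_i$, any $A_n^i$ that meets $V$ meets it inside $U_i$, so $A_n^i \cap V = B_n \cap V$.

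Combining both cases yields $\bigcup_n B_n$ with $B_n \subseteq D_n$, $B_n \in \mathbb{A}_X$, and dense union in $U_0 \cup U_1 = X \setminus (\overline{X_0} \cap \overline{X_1})$. The hardest part I anticipate is showing this set is dense in $X$: if both $X_0$ and $X_1$ are dense in some open $W \subseteq X$, then $W \subseteq \overline{X_0} \cap \overline{X_1}$ and the above construction picks up nothing on $W$. To handle this, the decomposition must be refined — for instance by replacing $U_1$ with $X \setminus \overline{U_0}$, whose complement $\partial U_0$ is the boundary of an open set and is therefore always nowhere dense — and then by noting that $X_i \cap W$ is $\mathbb{A}$-separable as an open subspace of $X_i$ (a lemma readily derived from (a), (b), (c)), so one can iterate the argument inside $W$ viewed as the union of these two dense $\mathbb{A}$-separable subspaces. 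Conditions (b) and (c) are exactly what allows the $\mathbb{A}$-sets produced inside $X_i$, $X_i \cap W$, or $U_i$ to be shuttled back into $\mathbb{A}_X$ throughout.
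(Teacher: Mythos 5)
Your first stage is sound: with $U_i=X\setminus\overline{X_{1-i}}$ one has $U_i\subseteq X_i$, the enlarged sets $\widetilde D^i_n$ are dense in $X_i$, and conditions (a)--(c) do let you shuttle the selected pieces back into $\mbb A_X$; this produces a suitable selection that is dense in $U_0\cup U_1$. But the proof stands or falls on the residual open region $W=\operatorname{int}(\overline{X_0}\cap\overline{X_1})$, where both $X_0$ and $X_1$ are dense, and neither of your proposed fixes closes that gap. Iterating the same construction inside $W$ makes no progress: relative to $W$ the new open sets $W\setminus\overline{X_{1-i}\cap W}$ are \emph{empty} precisely because both traces are dense there, so the recursion stalls at the first step rather than terminating. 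Replacing $U_1$ by $U_1'=X\setminus\overline{U_0}$ does make $U_0\cup U_1'$ dense, but it destroys the step where you need $\widetilde D^1_n$ to be dense in $X_1$: now $U_1'$ is no longer contained in $X_1$, and $D_n\cap X_1$ can be empty on $U_1'$ (take $X=\mbb Q$, $X_0,X_1$ disjoint dense sets, and $D_n=X_0$ for all $n$; then $U_0=\emptyset$, $U_1'=X$, and $\widetilde D^1_n=\emptyset$). In short, no decomposition of $X$ into open sets chosen from $X_0$ and $X_1$ alone can work, because the $D_n$ may concentrate entirely in one of the two pieces over the region where both are dense.

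The missing idea is to let the open sets depend on the dense sets themselves. The paper first reduces to a \emph{decreasing} sequence $\{D_n\}$ (this costs a small lemma using that $\mbb A_X$ is an ideal), then sets $U_n=X\setminus\overline{Y\cap D_n}$ for $X=Y\cup Z$. On $U_n$ one automatically has $D_k\cap U_n\subseteq Z$ for all $k\ge n$, so $Z\cap U_n$ (open in $Z$, hence $\mbb A$-separable) can be applied to the tail $\{D_k\cap U_n:k\ge n\}$ with no enlargement needed; the finitely many selections landing in each $D_k$ are merged using (a). On the leftover $V=X\setminus\overline{\bigcup_n U_n}$ one has $D_n\cap V\subseteq\overline{D_n\cap Y}$, so $D_n\cap Y$ is dense in $V$ and $Y\cap V$'s $\mbb A$-separability finishes the job. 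This is exactly the mechanism that handles the region your decomposition cannot reach: where $Y\cap D_n$ is somewhere dense one uses $Y$, and elsewhere $D_n$ is forced into $Z$.
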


%


%
%


\begin{proof}
First we need some easy observations.

\begin{obs}\label{obs:open}
If $X$ is $\mbb A$-separable, and $U\subs X$ is open, then 
$U$ is also $\mbb A$-separable. 
\end{obs}

Indeed, if $\{D_n:n\in {\omega}\}$ are dense subset of $U$, then
$E_n=D_n\cup (X\setm \overline U)$ are dense subsets of $X$
for $n\in {\omega}$, so there are sets $A_n\in \mbb A$
with $A_n\subs D_n\cup (X\setm \overline U)$ such that 
$A=\bigcup\{A_n:n\in {\omega}\}$ is dense in $X$.
Let  $B_n=A_n\cap D_n$. Observe that $B_n\in \mbb A_U$ by (c).
Since   $\bigcup\{B_n:n\in {\omega}\}=A\cap U$, the set  $\bigcup\{B_n:n\in {\omega}\}$ is dense in $U$,
which proves the Observation.

We need the following lemma which corresponds to \cite[Lemma 2.1]{GS}.
\begin{lemma}\label{lm:union}
A topological space $X$ is $\mbb A$-separable iff for every decreasing 
sequence $\{ D_n : n \in  \omega\}$ of dense subsets of $X$, there are
sets $E_n \subs  D_n$ from  $\mbb A_X$ for  $n \in \omega $ such that 
$\bigcup\{ E_n : n \in \omega\}$ is dense in $X$.
\end{lemma}

\begin{proof}
Let $\{ C_m : m \in \omega\}$ be a sequence of dense subsets of $X$ . 
For each $n \in \omega $, let
  $D_n = \bigcup\{C_m:m\ge n\}$ . 
The sequence $\{D_n:n\in {\omega}\}$ is decreasing, so 
there are  sets 
$E_n \subs D_n$ from  $\mbb A_X$ for $n \in \omega $ such that 
$\bigcup\{ E_n : n \in \omega\}$ is dense in X . 
Let $F_m= C_m\cap \bigcup\{E_n:n\le m\}$.
Then $F_m\subs C_m$ and $F_m\in \mbb A_X$ by (a), and 
$\bigcup\{F_m:m<{\omega}\}=\bigcup\{E_n:n<{\omega}\}$, so
$\bigcup\{F_m:m<{\omega}\}$ is dense.
\end{proof}

Assume  $X=Y \cup Z$,  where $Y$ and $Z$ are $\mbb A$-separable.
Assume that 
$\{ D_n : n \in {\omega}\}$ is  a 
sequence of dense subsets of $X$ . 
By Lemma \ref{lm:union} we can assume that the sequence is decreasing.

Put
 $U_n = X \setminus \overline {Y \cap D_n}$ . 
Then $\{U_n : n \in  {\omega}\}$ is an increasing family of open sets in $X$. 

Fix an $n \in \omega$. Clearly $Z\cap U_n$ is dense in $U_n$.
Since $U_n$ is open, the subspace $U_n \cap Z$  of $Z$ is  $\mbb A$-separable.

For $k\ge n$, 
$U_n \cap D_k \subs  U_n\cap D_n \subs  U_n \cap Z \subs U_n$, so 
the set    $U_n \cap  D_k$
is  dense in $Z\cap U_n$.   Since $Z\cap U_n$ is $\mbb A$-separable,  
there are  sets $F_{n,k} \subs U_n \cap D_k$ from $\mbb A_{Z\cap U_n}$
for $k\ge n$ such that 
$\bigcup\{ F_{n,k} : k\ge n\}$ is dense in $U_n$ . 

Since $\mbb A_{Z\cap U_n}\subs \mbb A_Z$ by (b), we have
$\{F_{n,k}:n\le k<{\omega}\}\subs \mbb A_Z$.

Now put  $F_k = \{ F_{n,k} : n\le k\}$
for $k \in \omega$.  Then $F_k\subs D_k$ and   $F_k\in \mbb A_Z$ by (a) for $k<{\omega}$,
and
$\bigcup\{ F_k : k \in \omega\}$ is dense in $\bigcup\{U_n : n \in \omega\}$. 

Let $V=X\setm \overline{\bigcup\{U_n : n \in \omega\}}$.
For each $n\in {\omega}$,
$D_n\cap V\subs D_n\setm U_n\subs \overline{D_n\cap Y}$,
so $D_n\cap V\cap Y$ is dense in $V$. 

Since $Y\cap V$ is $\mbb A$-separable by observation \ref{obs:open}, 
there are  sets  $G_n \subs Y\cap V \cap D_n$
with $G_n\in \mbb A_{Y\cap V}$
  for $n \in \omega $ 
such that 
$\bigcup\{G_n : n \in \omega\}$ is  dense in $Y\cap V$, and so it is also dense in $V$.
Since $\mbb A_Z\cup \mbb A_Y\subs  \mbb A_X$ by (b),
we have $F_n\cup G_n\in \mbb A_X$ by (a).
Thus   $F_n \cup G_n$ is a  subset of $D_n$  from $\mbb A_X$ and 
$\bigcup\{ F_n \cup G_n : n \in \omega\}$ is dense in $X$. 
\end{proof}

\begin{corollary}\label{cor:union}
\begin{enumerate}
\item \cite{BMS} The union of two D-separable spaces is D-separable.
\item The union of two NWD-separable spaces is NWD-separable. 
\end{enumerate}
\end{corollary}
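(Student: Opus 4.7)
The plan is to derive both parts as special cases of Theorem~\ref{tm:union} by specifying a suitable family $\mathbb{A}_X$ in each case. For part~(2) I will take $\mathbb{A}_X$ to be the collection of nowhere dense subsets of $X$, so that $\mathbb{A}$-separability is, by definition, $NWD$-separability. For part~(1) the natural choice is the family of discrete subsets of $X$; as in the proof from \cite{BMS}, the only subtlety is that the ``ideal'' axiom must be interpreted modulo a passage to $\sigma$-discrete sets and a subsequent reindexing, since the union of two discrete sets need not be discrete. The verifications for the two cases are parallel, so I focus on~(2).

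For $\mathbb{A}_X = \mathrm{nwd}(X)$, condition~(a) is the classical fact that finite unions and subsets of nowhere dense sets remain nowhere dense. For condition~(b), suppose $A \subseteq Z \subseteq X$ is nowhere dense in $Z$, and let $V \subseteq X$ be any nonempty open set. If $V \cap Z = \emptyset$ then $V \cap A = \emptyset$ because $A \subseteq Z$. Otherwise $V \cap Z$ is a nonempty open subset of $Z$, so by nwd-ness there exists a nonempty $Z$-open set $U \subseteq V \cap Z$ disjoint from $A$; writing $U = W \cap Z$ for some $X$-open $W$, the set $V \cap W$ is a nonempty open subset of $V$ (it contains $U$), and $V \cap W \cap A \subseteq W \cap Z \cap A = U \cap A = \emptyset$. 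Hence $A$ is nowhere dense in $X$.

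Condition~(c) follows from the observation that if $U \subseteq X$ is open then for every $B \subseteq U$ one has $\overline{B}^U = \overline{B}^X \cap U$, so that being nowhere dense in $U$ is equivalent to being a subset of $U$ which is nowhere dense in $X$; this immediately gives $\mathbb{A}_U = \{A \cap U : A \in \mathbb{A}_X\}$. With (a)--(c) verified, Theorem~\ref{tm:union} yields (2) at once, and an entirely analogous verification (after the discrete-vs-$\sigma$-discrete bookkeeping from \cite{BMS}) yields~(1).

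The main obstacle is condition~(b): a priori, being nowhere dense in a subspace is a weaker requirement than being nowhere dense in the ambient space, and the argument above shows that the openness-free form still works because $A$ is forced to live inside $Z$. The parallel obstacle in part~(1) is that discrete sets do not literally form an ideal, which is why one must either replace $\mathbb{A}_X$ by the $\sigma$-ideal generated by discrete sets (and reindex the resulting $\sigma$-discrete witnesses into the original sequence $(D_n)$) or appeal to the direct argument of \cite{BMS}; once either route is taken, Theorem~\ref{tm:union} applies and the corollary follows.
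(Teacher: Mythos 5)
Your treatment of part (2) is correct and is exactly the paper's (implicit) route: the corollary is stated without proof immediately after Theorem~\ref{tm:union}, the intended argument being precisely to take $\mathbb{A}_X$ to be the nowhere dense subsets of $X$ and check (a)--(c). Your verifications are right, and the one genuinely non-obvious point --- condition (b), that a subset of $Z$ which is nowhere dense \emph{in} $Z$ is nowhere dense in $X$ --- is handled correctly by your relative-open-set argument; (c) then follows as you say. One caution about part (1): your first proposed repair, namely passing to the ($\sigma$-)ideal generated by discrete sets and then ``reindexing'', is not as routine as the phrase suggests. The output of Theorem~\ref{tm:union} for that ideal is a selection of \emph{finite unions} of discrete sets $E_n\subseteq D_n$, and there is no evident way to convert this back into a single discrete selection from each $D_n$: the finitely many discrete pieces attached to one index cannot be merged (a union of two discrete sets need not be discrete), and they cannot in general be injectively redistributed to other indices, since each piece is only guaranteed to lie in its own $D_n$. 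So the equivalence of D-separability with ``(finite-unions-of-discrete)-separability'' is not automatic, and the honest derivation of part (1) is the one you list second --- the direct argument of \cite{BMS} --- which is also exactly what the paper does by citing \cite{BMS} for that item rather than deducing it from Theorem~\ref{tm:union}.
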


\section{Examples in ZFC and related results} \label{zfc}

The first part of this section deals with the construction of an $NWD$-separable space which is not $d$-separable.

\newcommand{\nice}{deep}

Given a cardinal $\kappa$, 
we say that a $\pi$-base $\mc U$  is {\em$\kappa$-\nice} iff
for each  decreasing sequence  $\{U_n\}_{n\in \kappa}\subs \mc U$ we have $int(\bigcap_{n\in \kappa}U_n)\ne \empt$.


 \begin{lemma} \label{lemmameag}
 For each infinite cardinal $\kappa$, there is a crowded regular space of size $2^\kappa$ which has a $\kappa$-\nice\ $\pi$-base.
 \end{lemma}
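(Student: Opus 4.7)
The plan is to realize $X$ as $2^{\kappa}$ endowed with the box topology whose basic open sets are
$$[s]=\{x\in 2^{\kappa}:s\subseteq x\}$$
for partial functions $s:A\to 2$ with $A\subseteq\kappa$ and $|A|\le\kappa$. This topology refines the usual Tychonoff product topology, and clearly $|X|=2^{\kappa}$.

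First I would check zero-dimensionality, regularity and the absence of isolated points. Each $[s]$ is clopen, because
$$2^{\kappa}\setminus [s]=\bigcup_{\alpha\in A}[\{(\alpha,1-s(\alpha))\}]$$
is open, so $X$ is zero-dimensional and Hausdorff (distinct points differ at some coordinate), hence regular. Moreover every nonempty basic open $[s]$ has cardinality $2^{\kappa}$, since $|\kappa\setminus A|=\kappa$ and the remaining coordinates are free; in particular no point is isolated, so $X$ is crowded.

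Next I would take the natural $\pi$-base $\mathcal{U}=\{[s]:|\dom s|\le\kappa\}$ and verify that it is $\kappa$-deep. Given a decreasing sequence $\{[s_n]\}_{n\in\kappa}\subseteq\mathcal{U}$, a quick argument shows that $[s_{n+1}]\subseteq [s_n]$ forces $s_n\subseteq s_{n+1}$: otherwise a common extension could be modified on some coordinate of $\dom s_n\setminus\dom s_{n+1}$ (or on a coordinate where $s_n$ and $s_{n+1}$ disagree) to escape $[s_n]$. Hence the $s_n$ form an increasing chain of compatible partial functions, and $s=\bigcup_{n\in\kappa}s_n$ satisfies $|\dom s|\le\kappa\cdot\kappa=\kappa$. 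Therefore $[s]\in\mathcal{U}$ and $[s]\subseteq\bigcap_{n\in\kappa}[s_n]$, so the latter has nonempty interior.

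The only mildly delicate point is the implication $[s_{n+1}]\subseteq[s_n]\Rightarrow s_n\subseteq s_{n+1}$ in the box topology; once this is in hand the rest is bookkeeping with partial functions of size at most $\kappa$, so I do not expect any real obstacle.
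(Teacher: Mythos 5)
There is a genuine gap, and it lies not in the step you flag as ``mildly delicate'' but earlier, in the choice of the underlying set. You take the index set to be $\kappa$ itself and allow supports $A\subseteq\kappa$ with $|A|\le\kappa$. Since \emph{every} subset of $\kappa$ has size at most $\kappa$, this is the full box topology on $2^{\kappa}$: in particular $A=\kappa$ is admissible, so $[s]=\{s\}$ is open for every total $s$ and the space is discrete --- hence not crowded, which is an essential part of the statement (and of its later use: a discrete space has no nonempty nowhere dense subsets). Your assertion that $|\kappa\setminus A|=\kappa$ for every admissible $A$ is simply false. If instead you shrink the supports to $|A|<\kappa$, crowdedness is restored but the depth argument breaks: an increasing chain $\langle s_{n}:n<\kappa\rangle$ of partial functions each of size $<\kappa$ can have union $s$ with $|\dom s|=\kappa$, and then $[s]$, though nonempty, has empty interior in the $<\kappa$-box topology, because any basic $[t]$ with $|\dom t|<\kappa$ leaves free some coordinate of $\dom s$ and hence contains points outside $[s]$. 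So neither reading of your construction works.

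The paper's remedy is to lengthen the index set to $\kappa^{+}$ and use supports of size $\le\kappa$ (concretely, functions $s\in 2^{<\kappa^{+}}$): a union of $\kappa$ many such pieces still has size $\le\kappa$, so it names a genuine basic open set, while every basic set leaves $\kappa^{+}$ many coordinates free, so the space is crowded. To bring the cardinality down from $2^{\kappa^{+}}$ to $2^{\kappa}$ one restricts to the $\Sigma$-product $\Sigma_{\kappa}(2^{\kappa^{+}})=\{f\in 2^{\kappa^{+}}:|f^{-1}\{1\}|\le\kappa\}$, which still meets every $B(s)$. Your chain argument (decreasing basic sets force an increasing chain of conditions; take the union) is exactly the right idea and is what the paper does; the missing ingredient is a space in which that union is guaranteed to define a nonempty \emph{open} set without collapsing to a point.
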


\begin{proof}
We claim that a space with the claimed properties is $$X=\Sigma_\kappa(2^{\kappa^+})=\{f\in 2^{\kappa^+}: |f^{-1}\{1\}|\le \kappa \},$$ 
endowed with the $\kappa$ supported box product topology. 

If $s\in 2^{<{\kappa^+}}$,  then let
$B(s)=\{f\in X: s\subs f\}$.
Put
\begin{equation}\notag 
\mc U=\{B(s): s\in 2^{<{\kappa^+}}\}.     
\end{equation}
Then $\mc U$ is  actually a base of $X$.
To show that $\mc U$ is \nice, assume that  
$\<B(s_n):{n\in \omega}\>$ is decreasing. Then we have $s_0\subs s_1\subs \dots$, and so 
$s=\bigcup_{n\in \kappa}s_n$ is a function, and $B(s)\subs \bigcap_{n\in \kappa}B(s_n)$. 
\end{proof}


 \begin{lemma}\label{lm:NWD-in-product}
 Assume that  $X$ has a $\omega$-\nice\ $\pi$-base. Then $Y=X\times \mbb Q$ is NWD-separable. 
 \end{lemma}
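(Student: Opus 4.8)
The plan is to isolate the single place where $\omega$-deepness is used in a lemma about nowhere dense sets, and then reduce the actual selection to a combinatorial distribution over the countably many rational levels. First I would prove the deepness input: \emph{if $E\subseteq Y=X\times\mathbb{Q}$ has every horizontal fiber $E_q=\{x:(x,q)\in E\}$ nowhere dense in $X$, then $E$ is nowhere dense in $Y$}. Since $\{U\times J:U\in\mathcal{U},\ J\text{ basic open in }\mathbb{Q}\}$ is a $\pi$-base of $Y$, it suffices to put a basic open set missing $E$ inside each $U_0\times J$. Enumerate $\mathbb{Q}\cap J=\{r_m:m\in\omega\}$ and, using that each $E_{r_m}$ is nowhere dense, recursively choose a decreasing sequence $U_0\supseteq U_1\supseteq\cdots$ in $\mathcal{U}$ with $U_{m+1}\cap E_{r_m}=\emptyset$. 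By $\omega$-deepness $W:=\operatorname{int}(\bigcap_m U_m)\neq\emptyset$, and $W\cap E_{r_m}=\emptyset$ for all $m$, so $(W\times J)\cap E=\emptyset$; any $U'\in\mathcal{U}$ with $U'\subseteq W$ gives the desired basic open subset of $U_0\times J$ disjoint from $E$. This staircase argument is the heart of the proof and is exactly where the hypothesis is consumed.

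Next I would extract a \emph{structural corollary}. Applying the contrapositive of the above to the open subspace $U_0\times J_0$ (whose first factor $U_0$ inherits an $\omega$-deep $\pi$-base, namely $\{U\in\mathcal{U}:U\subseteq U_0\}$, and whose second factor is homeomorphic to $\mathbb{Q}$) and to the dense set $D\cap(U_0\times J_0)$, we learn that \emph{some} fiber $(D)_q$ with $q\in J_0$ is somewhere dense in $U_0$. Thus for every dense $D$ and every basic $U_0\times J_0$ there is a rational $q\in J_0$ with $\operatorname{int}\overline{(D)_q}\cap U_0\neq\emptyset$; equivalently, for each $n$ and each basic $J_0\subseteq\mathbb{Q}$ the open set $\bigcup_{q\in J_0}\operatorname{int}\overline{(D_n)_q}$ is dense in $X$. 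For the selection itself, given dense sets $\{D_n:n\in\omega\}$ I would look for nowhere dense sets $F_n\subseteq\mathbb{Q}$ and set $E_n=D_n\cap(X\times F_n)$. Since $X\times F_n$ is nowhere dense in $Y$ whenever $F_n$ is nowhere dense in $\mathbb{Q}$ (its closure $X\times\overline{F_n}$ has empty interior), each $E_n$ is automatically nowhere dense, with no fiber condition imposed. By the structural corollary, choosing a level $q\in F_n$ with $\operatorname{int}\overline{(D_n)_q}\cap U_0\neq\emptyset$ forces $(D_n)_q\cap U_0\neq\emptyset$, so $E_n$ meets $U_0\times J_0$ as soon as $q\in F_n\cap J_0$. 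Hence everything comes down to choosing the $F_n$ nowhere dense so that for every basic $U_0\times J_0$ there are $n$ and $q\in F_n\cap J_0$ with $\operatorname{int}\overline{(D_n)_q}\cap U_0\neq\emptyset$.

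\emph{The main obstacle} is precisely this distribution. Each $F_n$ must be nowhere dense in $\mathbb{Q}$, hence must omit a dense open set of levels, whereas the structural corollary only supplies usable levels \emph{densely} in each interval, and a dense set of levels can avoid a prescribed nowhere dense $F_n$. I would resolve it by a transfinite recursion along a maximal antichain $\{V_\gamma\}$ of $\mathcal{U}$ (which has dense union): at stage $\gamma$ I route the covering of $V_\gamma$ to some index $n$ and rational $q\in J_0$ with $\operatorname{int}\overline{(D_n)_q}\cap V_\gamma\neq\emptyset$, spreading the chosen rationals across the infinitely many indices $n$ so that the set of levels assigned to each fixed $n$ stays nowhere dense. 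The delicate point, which I expect to be the crux of the write-up, is to keep every $F_n$ nowhere dense \emph{in the limit} while still meeting all requirements; the countability of $\mathbb{Q}$ together with the abundance of somewhere dense fibers guaranteed by the structural corollary (hence, ultimately, by $\omega$-deepness) is what should provide enough room to carry this out.
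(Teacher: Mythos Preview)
Your first lemma and its ``structural corollary'' are correct and coincide with the paper's first claim: deepness gives, for every dense $S$ and every basic $U\times(p,q)$, a rational $r\in(p,q)$ and an open $V\subseteq U$ with $\pi_r(S)$ dense in $V$. The difficulty is that you stop using deepness there, and the remaining ``distribution'' step you sketch does not close.

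Concretely, your recursion along a maximal antichain $\{V_\gamma\}$ assigns to each $V_\gamma$ (and, implicitly, to each interval $J_0$) a pair $(n,q)$ with $\operatorname{int}\overline{(D_n)_q}\cap V_\gamma\neq\emptyset$. But this only makes $(D_n)_q$ \emph{somewhere} dense in $V_\gamma$; for a finer $U_0\subsetneq V_\gamma$ disjoint from $\operatorname{int}\overline{(D_n)_q}$ you have covered nothing, so density of $\bigcup_n E_n$ does not follow. Refining the antichain does not help: you would then need a level in every interval for every member of a $\pi$-base of $X$, and there is no mechanism keeping each global $F_n=\{q:\text{assigned to }n\}$ nowhere dense. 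The paper does \emph{not} take $E_n$ of the form $D_n\cap(X\times F_n)$ for a global nowhere dense $F_n\subseteq\mathbb Q$; it lets the level depend on the region.

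The missing idea is a \emph{second} use of $\omega$-deepness. Enumerate the rational intervals as $\{(p_n,q_n):n<\omega\}$. Inside any given open $U$, apply your corollary repeatedly to build a decreasing chain $U\supseteq U_0\supseteq U_1\supseteq\cdots$ in $\mathcal U$ together with distinct $r_n\in(p_n,q_n)$ so that $\pi_{r_n}(D_n)$ is dense in $U_{n+1}$. Deepness then yields $V=\operatorname{int}\bigl(\bigcap_n U_n\bigr)\neq\emptyset$, and on $V$ \emph{every} $\pi_{r_n}(D_n)$ is dense; hence $\bigcup_n D_n\cap(V\times\{r_n\})$ is dense in $V\times\mathbb Q$. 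Now take a maximal disjoint family $\mathcal V$ of such $V$'s (with their attached rationals $r^V_n$) and set
\[
A_n=\bigcup_{V\in\mathcal V} D_n\cap\bigl(V\times\{r^V_n\}\bigr).
\]
Each $A_n$ is nowhere dense because on every $V\in\mathcal V$ it lives in a single horizontal level $X\times\{r^V_n\}$, and $\bigcup_n A_n$ is dense because it is dense in each $V\times\mathbb Q$. The point is that once the fibers are \emph{dense} in $V$ (not merely somewhere dense), a single level per index suffices on $V$, and your distribution problem disappears.
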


\begin{proof} We need two claims.
\begin{claim}
If $S\subs Y$ is dense, then 
for each non-empty open $U\subs X$ and $p<q\in \mbb Q$
there is non-empty open $V\subs U$ and $p<r<q\in \mbb Q$
such that $$\pi_r(S)\stackrel{def}{=}\{x\in X: \<x,r\>\in S\}$$ is dense in $V$. 
\end{claim}
\begin{proof}[Proof of the claim]
Assume on the contrary that  the sets $\pi_r(S)$ are nowhere dense. 
Enumerate $(p,q)\cap \mbb Q$ as $\{r_n:n\in {\omega}\}$.
Construct a decreasing sequence  $\{U_n\}_{n\in {\omega}}\subs \mc U$
such that $U_n\cap \pi_{r_n}(S)=\empt$.
Then the set  $W=int(\bigcap_{n\in {\omega}} W_{r_n})$ is non-empty.
Thus  $S\cap (W\times (p,q))=\empt$, contradiction.
\end{proof}

\begin{claim}
If $\{S_n:n<{\omega}\}\subs Y$ are dense, then 
for each a non-empty open $U\subs X$ there is non-empty open $V\subs U$ and 
there is a sequence  $\{r^V_n:n<{\omega}\}\subs \mbb Q$
such that 
\begin{equation}\label{eq:snrnv}
\text{$\bigcup_{n\in \omega}\{S_n\cap (X\times \{r^V_n\})\}$ is dense in $V\times \mbb Q$.
}
\end{equation}
\end{claim}
\begin{proof}[Proof of the claim]
Enumerate the pairs 
$\{\<p,q\>:p,q\in \mbb Q, p<q\}$ as $\{\<p_n,q_n\>:n<{\omega}\}$.

Construct  a decreasing sequence of open sets $U_0\supset U_1\supset U_1\supset \dots$
from $\mc U$
and distinct rational numbers $r_n$ such that   
\begin{enumerate}
\item $U_0\subs U$,
 \item $r_n\in (p_n,q_n)$,
\item  {$\pi_{r_n}(S_n)$ is dense in $U_{n+1}$.
}
\end{enumerate}
The construction can be carried out by the previous claim.
Then  $V=int(\bigcap_{n\in {\omega}}U_n)\ne \empt$ works
if we take $r^V_n=r_n$.
\end{proof}
Let $\mc V$ be a maximal disjoint family of open sets in $X$
such that every $V\in \mc V$ satisfies the requirements of the previous claim.
Then $\bigcup\mc V$ is dense in $X$ by the previous claim.

Let 
\begin{equation}\label{eq:andef}
 A_n=\bigcup_{V\in \mc V} S_n\cap (V\times \{r^V_n\}).
\end{equation}
Then $A_n$ is nowhere dense because for $V\in \mc V$ we have
$A_n\cap (V\times \mbb Q)\subs X\times \{r^V_n\}$.
Moreover $A=\bigcup_{n\in \omega}A_n$ is dense, because $A\cap (V\times \mbb Q) $ is dense in $V\times \mbb Q$
by  (\ref{eq:snrnv}) and (\ref{eq:andef}).
\end{proof}

\begin{example}\label{ex:NWD_not_d}
There is an NWD-separable, but not d-separable space of size $\mathfrak{c}$.
\end{example}
\begin{proof}
We show that if  $Y=X\times \mbb Q$ where
 $X$ is the space from Lemma $\ref{lemmameag}$, then $Y$ is NWD-separable, but not d-separable.

By Lemma $\ref{lm:NWD-in-product}$ the space $Y$ is NWD-separable.

If $\{D_n:n\in {\omega}\}$ are discrete in $Y$, then 
\begin{equation}
\pi_q(D_n)\stackrel{def}{=}\{x\in X: \<x,q\>\in D_n\} 
\end{equation}
is  discrete, and so nowhere dense in $X$ for  $q\in \mbb Q, n\in {\omega}$. 
Since $X$ has a $\omega$-\nice\ $\pi$-base, 
there is an open $U\subs X$ with $(U\times \mbb Q)\cap \bigcup_{n\in {\omega}}D_n=\empt$,
and so $\bigcup_{n\in {\omega}}D_n$ is not dense. 
\end{proof}

In Theorem \ref{ex:unified} we show that it is consistent that $2^\omega$ is large, but there is  a $NWD$-separable non-$D$-separable space of size $\aleph_1$.

\begin{question} \label{smallcount}
Is there a $NWD$-separable non-$D$-separable space of size $\aleph_1$ in ZFC? Is there at least one whose size is bounded in ZFC?
\end{question}

At least the first question seems to require techniques different from those of this paper. Indeed,  a space having a  $\omega$-\nice\ subbase is Baire, and Shelah and Todorcevic \cite{ST} showed modulo the consistency of an inaccessible cardinal, 
that the existence of a Baire space of size $\aleph_1$ is independent from ZFC. 

Let us continue with another example: a countable, not $NWD$-separable space. The following result was proved in \cite{BMS} using a direct construction; the $\mathcal{D}$-forced technology of \cite{JSS} can be used to give an alternative proof.

\begin{example}\label{ex:countable_not_NWD}
$2^{\mf c}$ has a countable, dense, not $NWD$-separable subspace. 
\end{example}

\begin{proof}
By \cite[Theorem 4.9]{JSS}
there is a countable, dense nodec subspace $X$
of $2^{\mf c}$ such that $X$ can be partitioned into 
submaximal dense subspaces $\mc D=\{D_n:n\in {\omega}\}$,
and $X$ is   $\mc D$-forced, i.e. if $D\subs X$ is somewhere dense, then 
$D\supset D_n\cap U$ for some $n\in {\omega}$ and non-empty open set $U$.

Then $X$ is not NWD-separable.
Indeed, if $E_n\subs D_n$ is nowhere dense, then $E=\bigcup_{n\in {\omega}}E_n$
is not dense, because it can not contain any $D_n\cap U$. 
\end{proof}

\begin{example}\label{ex:nwd_not_d_not_NWD}
There is an nwd-separable, but not d-separable and not NWD-separable 
space of size $2^{2^\mathfrak{c}}$.
\end{example}

\begin{proof}
By Lemma \ref{lemmameag}, there is a crowded regular space $X$ of size 
$2^{2^{\mathfrak{c}}}$ which has a 
$2^\mathfrak{c}$-\nice\ $\pi$-base. 
Let $Y=2^\mathfrak{c}$ with the product topology.

We claim that $X\times Y$ has the required properties.
\begin{claim}
 $X\times Y$ is nwd-separable.
 \end{claim}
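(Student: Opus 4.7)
The plan is to invoke the Proposition from Section~\ref{nonsel} on $nwd$-separability of products, which does nearly all of the work for us. That Proposition states that if $X$ is $nwd$-separable and $Y$ is arbitrary then $X\times Y$ is $nwd$-separable, and moreover that any product $\prod_{\alpha<\lambda}X_\alpha$ with $|X_\alpha|\ge 2$ and infinite $\lambda$ is $nwd$-separable. The space $Y=2^{\mathfrak c}$ is of the latter form, hence it is itself $nwd$-separable, and therefore $X\times Y$ is $nwd$-separable by the first part of the Proposition, regardless of the structure of $X$.

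If one prefers a more self-contained argument, the Hewitt--Marczewski--Pondiczery theorem gives a countable dense subset $\{y_n:n\in\omega\}\subseteq Y=2^{\mathfrak c}$. Since $Y$ is crowded, each singleton $\{y_n\}$ is nowhere dense in $Y$, so each ``slab'' $X\times\{y_n\}$ is nowhere dense in $X\times Y$. Their union $X\times\{y_n:n\in\omega\}$ is dense in $X\times Y$ because $\{y_n:n\in\omega\}$ is dense in $Y$, witnessing $nwd$-separability directly.

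There is no real obstacle here: the claim is essentially immediate, and neither the $2^{\mathfrak c}$-deep $\pi$-base on $X$ nor the large cardinality of $X$ plays any role in this particular claim. These hypotheses on $X$ are being set up for the harder subsequent claims of the example, namely that $X\times Y$ fails to be $d$-separable and fails to be $NWD$-separable; those arguments will presumably use the $2^{\mathfrak c}$-deepness of the $\pi$-base to find, via a decreasing sequence of basic open sets indexed by the continuum, a nonempty open product set disjoint from any candidate dense union of discrete (respectively, nowhere dense) pieces, in analogy with Example~\ref{ex:NWD_not_d}.
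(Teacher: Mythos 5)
Your second, self-contained argument is exactly the paper's proof: take a countable dense $D=\{d_n:n\in\omega\}\subseteq 2^{\mathfrak c}$ and observe that the slabs $X\times\{d_n\}$ are nowhere dense with dense union. Your first argument (citing the product Proposition from Section 2) is also valid and rests on the same idea, so the proposal is correct and takes essentially the same approach as the paper.
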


Indeed, let $D=\{d_n:n\in \omega\}$ be dense in $Y$.
Then $S_n=X\times\{d_n\}$ is nowhere dense in $X\times Y$,
but $\bigcup_{n\in \omega }S_n=X\times D$ is dense in $X\times Y$.

\begin{claim}
 $X\times Y$ is not d-separable.
 \end{claim}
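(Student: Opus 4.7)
The plan is to adapt the proof from Example~\ref{ex:NWD_not_d} that $X\times\mbb Q$ is not $d$-separable: slice each discrete set $D_n$ along the basic open sets of $Y$, show that every slice projects to a discrete (hence nowhere dense) subset of $X$, and diagonalize these projections away using the deep $\pi$-base of $X$. The only new ingredient is that $Y=2^{\mf c}$ has $\pi$-weight $\mf c$ rather than $\omega$, so we obtain $\mf c$ many slices and must run a transfinite induction of length $\mf c$; this is exactly what the $2^{\mf c}$-deep $\pi$-base of $X$ is tailored for. In fact the construction in Lemma~\ref{lemmameag}, namely $\mc U=\{B(s):s\in 2^{<(2^{\mf c})^+}\}$, yields an $\alpha$-deep $\pi$-base for every $\alpha\le 2^{\mf c}$, by the very same ``take the union of a compatible chain'' argument.

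Suppose $\{D_n:n\in\omega\}$ is a family of discrete subsets of $X\times Y$. For every $(x,y)\in D_n$ fix a basic open neighborhood $U_{(x,y)}\times[s_{(x,y)}]$ meeting $D_n$ only in $(x,y)$, where $s_{(x,y)}\in Fn(\mf c,2;\omega)$, and for each $s\in Fn(\mf c,2;\omega)$ set
$$D_{n,s}=\{(x,y)\in D_n:s_{(x,y)}=s\}.$$
The key observation is that the first-coordinate projection $\pi_1(D_{n,s})$ is discrete in $X$: every $(x',y')\in D_{n,s}$ already satisfies $s\subseteq y'$, so the witness $U_{(x,y)}\times[s]$ of discreteness of $D_n$ automatically contains $(x',y')$ whenever $x'\in U_{(x,y)}$, forcing $U_{(x,y)}\cap\pi_1(D_{n,s})=\{x\}$. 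Since $X$ is crowded and regular, each $\pi_1(D_{n,s})$ is nowhere dense in $X$.

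There are at most $\omega\cdot\mf c=\mf c$ sets of the form $\pi_1(D_{n,s})$; enumerate them as $\{N_\alpha:\alpha<\mf c\}$. By transfinite recursion choose a decreasing sequence $\{B(t_\alpha):\alpha<\mf c\}\subseteq\mc U$ such that $B(t_\alpha)\cap N_\alpha=\emptyset$ for every $\alpha<\mf c$. At each stage the partial intersection $int(\bigcap_{\beta<\alpha}B(t_\beta))$ is nonempty by deepness of $\mc U$, and since $N_\alpha$ is nowhere dense we can shrink inside it through a $\pi$-base element to produce $B(t_\alpha)$. Let $U=int(\bigcap_{\alpha<\mf c}B(t_\alpha))$, which is again nonempty by deepness. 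If $(x,y)\in D_n$, then $(x,y)\in D_{n,s_{(x,y)}}$ so $x\in N_{\alpha_0}$ for some $\alpha_0<\mf c$, whence $x\notin B(t_{\alpha_0})\supseteq U$; thus $U\times Y$ is an open subset of $X\times Y$ disjoint from $\bigcup_{n\in\omega}D_n$, contradicting its density.

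The one step that demands care is the slicing observation: by fixing the ``$Y$-stem'' $s$ in the witnessing neighborhood of $(x,y)$, the second coordinate becomes irrelevant on the domain of $s$, which is precisely what lets discreteness in $X\times Y$ project cleanly to discreteness in $X$. Once this is done the rest is a routine transfinite version of the $X\times\mbb Q$ argument of Example~\ref{ex:NWD_not_d}.
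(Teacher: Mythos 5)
Your proof is correct, and while it follows the same skeleton as the paper's (slice the discrete sets, observe the slices project to nowhere dense subsets of $X$, then diagonalize them away with the deep $\pi$-base), the slicing itself is genuinely different. The paper slices each $D_n$ by the \emph{points} of $Y=2^{\mathfrak c}$, setting $\pi_y(D_n)=\{x:\langle x,y\rangle\in D_n\}$; this produces $2^{\mathfrak c}$ many nowhere dense sets and therefore really uses the full $2^{\mathfrak c}$-deepness of the $\pi$-base. You instead slice by the stems $s\in Fn(\mathfrak c,2;\omega)$ of the witnessing neighborhoods, which produces only $\mathfrak c=\pi w(Y)$ many nowhere dense sets, so that $\mathfrak c$-deepness suffices (and your observation that the $\pi$-base of Lemma \ref{lemmameag} is $\alpha$-deep for every $\alpha\le 2^{\mathfrak c}$ is correct, since $(2^{\mathfrak c})^+$ is regular). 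Your version is in effect the transfinite analogue of the paper's own treatment of $X\times\mathbb{Q}$ in Example \ref{ex:NWD_not_d}, and it proves the formally stronger statement that $X'\times 2^{\mathfrak c}$ is not $d$-separable for any crowded regular $X'$ with merely a $\mathfrak c$-deep $\pi$-base; for the specific space at hand both arguments go through, the paper's being shorter and yours being more economical in the hypothesis it consumes.
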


If $\{D_n:n\in {\omega}\}$ are discrete in $Y$, then 
\begin{equation}
\pi_y(D_n)\stackrel{def}{=}\{x\in X: \<x,y\>\in D_n\} 
\end{equation}
is  discrete, and so nowhere dense in $X$ for  
$y\in Y, n\in {\omega}$. 
Since $X$ has a $|Y|$-\nice\ $\pi$-base, 
there is an open $U\subs X$ with 
$(U\times Y)\cap \bigcup_{n\in {\omega}}D_n=\empt$,
and so $\bigcup_{n\in {\omega}}D_n$ is not dense. 

\begin{claim}
 $X\times Y$ is not NWD-separable.
 \end{claim}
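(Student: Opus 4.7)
The plan is to build a sequence $\{D_n:n<\omega\}$ of dense subsets of $X\times Y$ whose horizontal slices $\pi_y(D_n)=\{x\in X:(x,y)\in D_n\}$ are nowhere dense in $X$ for every $y\in Y$. Once such a sequence is in hand, the argument runs parallel to the proof of Claim 2: for any nwd $E_n\subseteq D_n$ one would have $\pi_y(E_n)\subseteq\pi_y(D_n)$ nowhere dense in $X$, producing a family $\{\pi_y(E_n):y\in Y,n\in\omega\}$ of at most $|Y|$ nowhere dense subsets of $X$. The $|Y|$-deep $\pi$-base of $X$ then allows a transfinite construction of a non-empty open $U\subseteq X$ avoiding all of them, which forces $(U\times Y)\cap E_n=\empt$ for every $n$ and contradicts the density of $\bigcup_n E_n$.

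To see how to construct the $D_n$, the key structural observation is that in $X=\Sigma_\kappa(2^{\kappa^+})$ (the space from Lemma \ref{lemmameag} with $\kappa=2^{\mathfrak c}$) any subset of cardinality at most $\kappa=|Y|$ is closed discrete, and therefore nowhere dense. Indeed, two distinct points of $X$ differ at some coordinate, and the $\kappa$-box topology allows a single basic neighborhood to use $\leq\kappa$ supporting coordinates, so any $\leq\kappa$-sized set can be separated point-by-point; and no basic open $B(s)$ has cardinality $\leq\kappa$, so closed sets of size $\leq\kappa$ have empty interior. It therefore suffices to arrange $|\pi_y(D_n)|\leq\kappa$ for every $y$.

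The natural attempt is as follows. Fix a $\pi$-base $\mc V=\{V_\alpha:\alpha<\mathfrak c\}$ of $Y$; note $|\mc V|=\mathfrak c=\kappa$. For each $V\in\mc V$ pick a partial function $f_V:X\to V$ with dense graph in $X\times V$, every fibre $f_V^{-1}(y)$ having cardinality at most $\kappa$. Set $D=\bigcup_{V\in\mc V}\mathrm{graph}(f_V)$ and $D_n=D$ for every $n$. Density of $D$ is immediate: given a basic open $U_X\times W_Y$, choose $V\in\mc V$ with $V\subseteq W_Y$ and any $x\in U_X\cap\dom(f_V)$; then $(x,f_V(x))\in D\cap(U_X\times W_Y)$. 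Moreover $\pi_y(D)=\bigcup_{V\in\mc V}f_V^{-1}(y)$ has cardinality at most $|\mc V|\cdot\kappa=\kappa$, hence is nowhere dense in $X$ by the structural fact above.

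The principal difficulty is the construction of the $f_V$ with fibres of size $\leq\kappa$: since $|X|=2^\kappa>|V|=\kappa$, a total surjective $f_V:X\to V$ is forced to have a fibre of cardinality $\geq 2^\kappa$. The remedy is to abandon surjectivity and carry out a transfinite recursion along the $\pi$-base of $X\times Y$: enumerate basic open pairs $\{(U_\beta,V_\beta):\beta<\lambda\}$ with $\lambda=\pi w(X\times Y)$ and at stage $\beta$ pick a fresh point $(x_\beta,f_{V_\beta}(x_\beta))$ in $U_\beta\times V_\beta$, maintaining the invariant that no $y\in Y$ has already been used as a value more than $\kappa$ times. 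This invariant survives the induction because each $V\in\mc V$ contains $2^\kappa$ fresh candidate values of $y$ at every stage, giving ample room to distribute the choices. With $D$ produced this way, the final step is a routine transfinite induction of length $|Y|$: enumerate $\{\pi_y(E_n):y\in Y,n\in\omega\}$ as $\{N_\alpha:\alpha<|Y|\}$, inductively shrink basic opens $U_\alpha\in\mc U$ inside $X\setminus N_\alpha$, use the $|Y|$-deepness of the $\pi$-base to pass limits, and take $U$ in the interior of the final intersection to obtain the desired contradiction.
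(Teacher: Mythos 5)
Your overall strategy cannot work, and the obstruction is a cardinality count that your own structural observation makes visible. Here $Y=2^{\mathfrak c}$ has cardinality $\kappa=2^{\mathfrak c}$, while, as you correctly note, every subset of $X=\Sigma_\kappa(2^{\kappa^+})$ of size at most $\kappa$ is closed discrete; since $|X|=2^\kappa$, this gives $d(X)\geq\kappa^+$. Now suppose $D\subseteq X\times Y$ is dense and fix any non-empty open $W\subseteq Y$. Since $D$ meets $U\times W$ for every non-empty open $U\subseteq X$, the set $\bigcup_{y\in W}\pi_y(D)$ is dense in $X$; but if every slice had size at most $\kappa$ this union would have size at most $|Y|\cdot\kappa=\kappa$ and hence be closed discrete, not dense. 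So no dense subset of $X\times Y$ has all slices of size at most $\kappa$, and (running your own ``avoid $\kappa$ many nowhere dense sets'' induction inside $X$) no dense subset even has all slices nowhere dense: some $\pi_y(D)$ must be somewhere dense in $X$. This is why your recursion breaks down: you must pick $\lambda=\pi w(X\times Y)>\kappa$ points while drawing second coordinates from a set of size only $\kappa$ (each basic open $V\subseteq Y$ has at most $|Y|=2^{\mathfrak c}=\kappa$ points, not $2^\kappa$), so by pigeonhole some $y$ gets used more than $\kappa$ times. A second warning sign is that your concluding argument never invokes the hypothesis that $E_n$ is nowhere dense in $X\times Y$, only that $E_n\subseteq D_n$; if it were valid it would apply to $E_n=D_n$ and show that $\bigcup_n D_n$ itself is not dense, contradicting the density of the $D_n$.

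The difficulty you are trying to sidestep --- that a set which is nowhere dense in $X\times Y$ (such as $X\times\{y\}$) can have a slice equal to all of $X$ --- is genuine, and it is exactly why the paper's proof of this claim is more elaborate than its proof that $X\times Y$ is not $d$-separable (where discreteness of $D_n$ does pass to every slice). The paper takes $D_n=X\times T_n$, where $\{T_n\}$ is a partition of a countable dense $\mathcal{D}$-forced subspace $T$ of $Y$ into dense pieces; it then uses the nowhere-density of $F_n\subseteq D_n$ in the product (via a length-$\mathfrak c$ induction over a base of $Y$, closed off using the deep $\pi$-base of $X$) to produce one $U\subseteq X$ with $F_n\cap(U\times Y)\subseteq U\times G_n$, where each $G_n\subseteq T_n$ avoids a dense open subset of $Y$, and finally invokes the $\mathcal{D}$-forced property of $T$ to conclude that $\bigcup_n G_n$ is nowhere dense in $Y$. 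Some appeal to the structure of the chosen dense sets in the $Y$-direction of this kind seems unavoidable; controlling the $X$-slices of dense sets is impossible here.
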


By \cite[Theorem 4.9]{JSS}
there is a countable, dense nodec subspace $T$
of $2^{\mf c}$ such that $T$ can be partitioned into 
submaximal dense subspaces $\mc D=\{D_n:n\in {\omega}\}$,
and $T$ is   $\mc D$-forced, i.e. if $D\subs T$ is somewhere dense, then 
$D\supset D_n\cap V$ for some $n\in {\omega}$ and non-empty open set $V$.

Let $E_n=X\times D_n$ for $n\in \omega$.
We show that if $F_n\subs E_n$ is nowhere dense, then 
$F=\bigcup_{n\in \omega}F_n$ cannot be dense.

Let $\{B_i:i<\mathfrak{c}\}$ be a base of $Y$, and 
fix a $\mathfrak{c}$-\nice\ $\pi$-base $\mc U$ of $X$.

By induction on $n$ construct
a decreasing sequence $\{U_n:-1\le n<\omega \}\subs \mc U$
as follows. Let $U_{-1} \in\mc U$ be arbitrary nonempty.
Assume that $U_{n-1}$ is constructed.

By transfinite induction construct a decreasing sequence 
$\{U^n_i:i<\mathfrak{c}\}\subs \mc U\cap \mc P(U_{n-1})$
such that for all  $i<\mathfrak{c}$ there is a non-empty
$V^n_i\subs B_i$ such that 
$$F_n\cap (U^n_i\times V^n_i)=\empt.$$
Since $\mc U$ is $\mathfrak{c}$-\nice,
 there is a nonempty open $U_n\subs X$ such that $U\subs U^n_i$
for all   $i<\mathfrak{c}$. 

Finally there is $U\in \mc U$ such that $U\subs U_n$
for all $n\in \omega$.

Since $U\subs U^n_i$, we have $F_n\cap (U\times V^n_i)=\empt$.
Write 
$V_n=\bigcup_{i<\mathfrak{c}}V^n_i$ and $G_n=D_n\setm V_n$  for $n<\omega$.
Then $V_n$ is open dense in $Y$ and   $F_n\cap (U\times V_n )=\empt$, 
so 
\begin{equation}\label{gn}
 F_n\cap (U\times Y)\subs U\times G_n.
\end{equation}

Since $G=\bigcup _{n\in \omega}G_n$ cannot contain $V\cap D_n$
for a non-empty open $V$, 
so $G$ is nowhere dense  in $T$ because $T$ is $\mc D$-forced.
But $T$ is dense in $Y$, so $G$ is nowhere dense in $Y$.
Especially there is a nonempty open $V$ with $V\cap G=\empt$.

But then, by (\ref{gn}), 
\begin{multline}
F\cap (U\times V)=\bigcup_{n\in \omega }F_n\cap (U\times V)= 
\bigcup_{n\in \omega } 
\big(F_n\cap (U\times Y)\big )\cap (U\times V)\subs\\ 
\bigcup_{n\in \omega} (U\times G_n) \cap (U\times V)=
(U\times G)\cap (U\times V)= U\times (G\cap V)
=\empt. 
\end{multline}
so $F$ cannot be dense, which was to be proved.
\end{proof}

Motivated by Example \ref{ex:countable_not_NWD}, 
one can define the following cardinal invariants:

\begin{definition} \cite{BMS}
Let 
\begin{equation}
\mf{ds}=\min \{{\kappa}: 2^{\kappa}\ \text{ is not $D$-separable}\}
\end{equation}
\begin{equation}\notag
\mf{cds}=\min\{{\kappa} : 2^{\kappa}\
\text{ contains a countable non-D-separable subspace}\}. 
\end{equation}
\end{definition}

We have $\mf{cds}\le 2^{\omega}$ by Example \ref{ex:countable_not_NWD}. Moreover, as $$\mathfrak{d}=\min\{\kappa: 2^\kappa \text{ contains a countable non-M-separable dense subspace}\}$$ was shown in  \cite{BBM}, we also have $\mathfrak{d} \leq \mathfrak{cds}$.

In \cite{BMS} the authors proved that the space $X^{2^{d(X)}}$ is never D-separable. In particular, if $X$ is separable, then $X^{2^{\omega}}$ is not D-separable. This exponent appears far from optimal and we can in fact improve it for separable spaces; the next theorem also solves Question 44 from \cite{BMS}, while we note that an alternative solution to this question was provided in \cite{AJR}.

\begin{theorem}\label{Lsp}
If $X$ is an  separable space with $|X|\ge 2$ then some dense subspace $Y$ of $X^{\oo}$ is not d-separable; hence
$X^{\oo}$ is not $D$-separable for any  separable $X$ with $|X|\ge 2$. Hence $\mf{ds}=\omega_1$.
\end{theorem}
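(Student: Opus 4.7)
The plan is to exhibit a dense subspace $Y\subseteq X^{\omega_1}$ which is hereditarily Lindel\"of and yet not separable. Any such $Y$ cannot be $d$-separable: hereditary Lindel\"ofness forces every discrete subset of $Y$ to be countable, hence every $\sigma$-discrete subset of $Y$ is countable, while non-separability of $Y$ forbids any countable subset from being dense in $Y$. Since the paper has already observed that $D$-separability is inherited by dense subspaces, the existence of such $Y$ immediately implies that $X^{\omega_1}$ is not $D$-separable. The concluding equality $\mathfrak{ds}=\omega_1$ then combines this upper bound (specialised to $X=2$) with the easy lower bound $\mathfrak{ds}>\omega$: the space $2^\omega$ is separable metric, hence $M$-separable, hence $D$-separable.

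To build $Y$, I would first pass to a countable dense set $D=\{d_n:n<\omega\}\subseteq X$ and work inside the dense subspace $D^{\omega_1}\subseteq X^{\omega_1}$, since any dense subspace of $D^{\omega_1}$ is automatically dense in $X^{\omega_1}$. The construction of $Y\subseteq D^{\omega_1}$ proceeds by transfinite recursion of length $\omega_1$ in the spirit of an HFC (hereditarily finally countable) set. At each stage $\alpha<\omega_1$ one has enumerated, via a standard bookkeeping, a countable list of basic cylinders of $D^{\omega_1}$ to be hit (for density) and a countable list of potential ``discrete witnesses'' (pairs of finite partial functions with values in $D$) that must be neutralised; the point $y_\alpha\in D^{\omega_1}$ is then chosen to lie in the next density cylinder while simultaneously belonging to each of the earlier cylinders whose support has become relevant below $\alpha$. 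A $\Delta$-system argument on the witnesses shows that every uncountable subset of $Y=\{y_\alpha:\alpha<\omega_1\}$ accumulates to some point of $Y$, so that $Y$ is hereditarily Lindel\"of; meanwhile $|Y|=\omega_1$ together with the bookkeeping shows that any countable subset of $Y$ can be avoided by a basic cylinder determined at a sufficiently late stage, so $Y$ is not separable.

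The main obstacle is executing the HFC construction in ZFC rather than under CH or $\diamondsuit$. For $D=\{0,1\}$ the required dense HFC in $2^{\omega_1}$ can be extracted from Moore's ZFC construction of an L-space (cited as \cite{Md}), whose $\rho$-function combinatorics supplies exactly the uniform coloring control needed at each stage of the recursion. For arbitrary countable $D$, one adapts Moore's $\rho$-coloring to a $D$-valued coloring, which is routine since the combinatorial properties used at each step depend only on the indexing of coordinates and on the ability to separate finite patterns, not on the number of available values. A secondary subtlety is to ensure density of $Y$ throughout $X^{\omega_1}$ rather than merely in the closed subspace $\{d_0,d_1\}^{\omega_1}$; this is precisely why the bookkeeping is performed over all basic cylinders of $D^{\omega_1}$ and not over a two-valued subsystem.
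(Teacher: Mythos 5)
Your overall strategy coincides with the paper's: exhibit a dense subspace $Y$ of $X^{\omega_1}$, living inside $D^{\omega_1}$ for a countable dense $D\subseteq X$, in which every discrete set is countable but which is not separable, and then invoke the fact that $D$-separability (hence $d$-separability) passes to dense subspaces. Your surrounding reductions are all sound: density in $D^{\omega_1}$ gives density in $X^{\omega_1}$; countable spread together with non-separability rules out $d$-separability; and $\mathfrak{ds}>\omega$ because $2^{\omega}$ has countable $\pi$-weight and is therefore $M$-separable, hence $D$-separable.

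The gap is in the construction of $Y$ itself. The recursion you describe --- at stage $\alpha$ one has ``enumerated, via a standard bookkeeping, a countable list of potential discrete witnesses that must be neutralised'' --- is precisely the classical CH/$\diamondsuit$ template for building HFC-type spaces: to neutralise at countable stages every witness that will ever be relevant, one needs only $\aleph_1$ many candidates, whereas in ZFC there are $2^{\aleph_0}$ of them with support in any fixed countable set. You correctly flag this as ``the main obstacle,'' but then dispose of it by asserting that the needed combinatorics ``can be extracted from Moore's ZFC construction'' and that adapting his $\rho$-coloring is ``routine.'' That assertion is the entire content of the theorem and is not justified: Moore's construction is not a stage-by-stage recursion against a bookkept list of witnesses, and it does not hand you a dense ZFC HFC in $2^{\omega_1}$ in the form your recursion requires. (Also, the relevant citation is \cite{M}, not \cite{Md}.) The paper avoids reopening Moore's construction entirely: it takes his L-space $L=\{f_{\alpha}:\alpha<\omega_1\}\subseteq\omega^{\omega_1}$, which meets every basic box $[\varepsilon]$, $\varepsilon\in Fn(\omega_1,\omega;\omega)$, in $\omega_1$ points, and defines $y_{\alpha}(\beta)=d_{f_{\alpha}(\beta)}$ for $\beta<\alpha$ and $y_{\alpha}(\beta)=d_0$ for $\beta\ge\alpha$. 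Density of $Y=\{y_{\alpha}:\alpha<\omega_1\}$ is immediate from the density property of $L$; every discrete subset of $Y$ is countable because $Y$ is a continuous image of the hereditarily Lindel\"of space $L$; and the truncation above the diagonal forces $d(Y)=\omega_1$. Replacing your recursion by this direct substitution closes the argument; as written, the key object is never actually constructed.
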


\begin{proof}
J. Moore in \cite{M} constructed an L-space 
$L=\{f_{\alpha}:{\alpha}<\oo\}\subs {\omega}^\oo$ such that 
\begin{equation}\label{eq:dense}
\text{
$\big|L\cap [\varepsilon]\big|=\oo$ for each finite function $\varepsilon\in Fn(\oo,{\omega};{\omega})$.
} 
\end{equation}

Let $D=\{d_n:n<{\omega}\}$ be dense in $X$. 

For ${\alpha}<\oo$ define $y_{\alpha}\in  D^\oo$ as follows:
\begin{equation}\label{eq:ya}
y_{\alpha}({\beta})=\left \{
\begin{array}{ll}
d_{f_{\alpha}({\beta})}&\text{if ${\beta}<{\alpha}$,}\\ 
d_0&\text{if ${\beta}\ge {\alpha}$.}
\end{array}
\right . 
\end{equation}
Let $Y=\{y_{\alpha}:\alpha<\omega_1\}$.

Then $Y$ is dense in $X^\oo$ by (\ref{eq:dense}).
Moreover, if $D\subs Y$ is discrete, then $D$ is countable because
$L$ is hereditarily Lindel\"of and the map $f_\alpha \to y_\alpha$ is continuous.
So $Y$ is not d-separable, because $d(Y)=\oo$ by (\ref{eq:ya}).
\end{proof}

However, the following remains open:

\begin{conjecture}
 The space $X^{{d(X)^+}}$ is never $D$-separable.
\end{conjecture}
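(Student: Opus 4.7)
The plan is to generalize Theorem \ref{Lsp}, whose argument hinged on the existence of Moore's $L$-space. Let $\kappa = d(X)$ be infinite and fix a dense $D = \{d_\xi : \xi < \kappa\} \subseteq X$. Since $D$-separability is inherited by dense subspaces and implies $d$-separability, it suffices to produce a dense subspace $Y \subseteq X^{\kappa^+}$ that is not $d$-separable. We aim for $Y$ with $d(Y) = \kappa^+$ and spread $s(Y) \le \kappa$, since then any countable union of discrete subsets of $Y$ has cardinality at most $\kappa \cdot \omega = \kappa < \kappa^+$, and hence cannot be dense.

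The template is to produce a ``higher $L$-space'' $L = \{f_\alpha : \alpha < \kappa^+\} \subseteq \kappa^{\kappa^+}$ such that (i) $|L \cap [\varepsilon]| = \kappa^+$ for every $\varepsilon \in Fn(\kappa^+, \kappa; \omega)$, and (ii) every discrete subspace of $L$ has cardinality at most $\kappa$. Given such an $L$, set
\[
y_\alpha(\beta) = \begin{cases} d_{f_\alpha(\beta)} & \text{if } \beta < \alpha, \\ d_0 & \text{if } \beta \ge \alpha, \end{cases}
\]
and $Y = \{y_\alpha : \alpha < \kappa^+\}$. Property (i) together with density of $D$ in $X$ will yield density of $Y$ in $X^{\kappa^+}$, exactly as in the proof of Theorem \ref{Lsp}. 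The map $f_\alpha \mapsto y_\alpha$ is, modulo the threshold condition on $\alpha$, coordinatewise continuous; hence discrete subsets of $Y$ pull back to discrete subsets of $L$ and are bounded in size by (ii). This gives $s(Y) \le \kappa$, while (i) forces $d(Y) = \kappa^+$.

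The hard part is producing the higher $L$-space in ZFC. For $\kappa = \omega$, Moore's $L$-space provides exactly such an object, which is why Theorem \ref{Lsp} can be proved outright. For $\kappa \ge \omega_1$ one requires a regular space of cardinality $\kappa^+$, densely embedded in $\kappa^{\kappa^+}$, all of whose discrete subspaces are of cardinality $\le \kappa$---a ``higher $L$-space''. Such spaces are known to exist under combinatorial principles like $\diamondsuit_\kappa$ or suitable instances of GCH, but their construction in pure ZFC seems to be open, which presumably accounts for the statement being left as a conjecture rather than a theorem.

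An alternative route that might circumvent this obstacle is to replace the $L$-space gadget by a direct combinatorial argument on the supports of a diagonal family of dense sets in $X^{\kappa^+}$: one would choose dense $\{D_n : n < \omega\}$ so that every potential discrete selector $E_n \subseteq D_n$ is constrained by a $\Delta$-system argument on finite supports to have cardinality at most $\kappa$, contradicting density of $\bigcup_n E_n$ in a product of weight $\kappa^+$. Pushing through such a direct argument in ZFC---without ever invoking higher $L$-space-type structures---appears to be the most promising avenue, and is where the main difficulty lies.
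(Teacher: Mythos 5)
The statement you are addressing is labelled as a \emph{conjecture} in the paper: the authors give no proof of it, and it is left open. So there is no argument of theirs to compare yours against. What you have written is, by your own framing, a plan rather than a proof, and as such it does not establish the conjecture. The plan itself is a faithful and sensible generalization of the paper's Theorem \ref{Lsp}: replace Moore's $L$-space by a ``higher $L$-space'' $L\subseteq\kappa^{\kappa^+}$ of size $\kappa^+$ with spread at most $\kappa$ and with every basic open set $[\varepsilon]$, $\varepsilon\in Fn(\kappa^+,\kappa;\omega)$, meeting $L$ in $\kappa^+$ points, and then transport it into $X^{\kappa^+}$ via a dense set of $X$ exactly as in the $\kappa=\omega$ case. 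The continuity/injectivity argument bounding discrete subspaces of $Y$ by $s(L)\le\kappa$ goes through, and you correctly identify that the entire difficulty is concentrated in the ZFC existence of the higher $L$-space, which is not available for $\kappa\ge\omega_1$. This is precisely why the statement is a conjecture and not a theorem; your proposal reduces one open problem to another (arguably harder) one, and the ``alternative route'' in your last paragraph is only a hope, not an argument.

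Two smaller points. First, you attribute $d(Y)=\kappa^+$ to condition (i); in fact (i) alone does not force this. What does force it is the threshold in your definition of $y_\alpha$: for any $A\in[\kappa^+]^{\le\kappa}$ there is $\beta<\kappa^+$ above $\sup A$, and then $y_\alpha(\beta)=d_0$ for all $\alpha\in A$, so $\{y_\alpha:\alpha\in A\}$ misses the open set of points whose $\beta$-th coordinate avoids a neighbourhood of $d_0$ (using $|X|\ge 2$ and regularity to separate $d_0$ from some other point). This is exactly the paper's reason in Theorem \ref{Lsp}, and your construction has it, but the justification should be corrected. Second, even if one grants a higher $L$-space under $\diamondsuit_\kappa$ or GCH, that would only yield a consistency result, whereas the conjecture asserts a ZFC theorem; so the conditional version of your plan does not settle the conjecture either.
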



 The next corollary solves Question 45 from \cite{BMS}:

\begin{corollary}
$MA+\neg CH$ implies $\mf{ds}<\mathfrak{cds}$.
\end{corollary}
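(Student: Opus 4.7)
The plan is to combine the three facts already established in this section. First, Theorem \ref{Lsp} gives us the exact value $\mathfrak{ds} = \omega_1$ in ZFC. Second, the remark immediately preceding the statement of $\mathfrak{ds}$ and $\mathfrak{cds}$ records that $\mathfrak{d} \leq \mathfrak{cds}$ (derived from the equation $\mathfrak{d}=\min\{\kappa: 2^\kappa \text{ contains a countable non-}M\text{-separable dense subspace}\}$ of \cite{BBM}, together with the fact that every $D$-separable space is $M$-separable, since a discrete subset of a dense set still gives a dense $\sigma$-discrete hence $\sigma$-finite-via-selection set — more precisely, the $M$-separable bound follows because any non-$M$-separable countable dense subspace of $2^\kappa$ is automatically non-$D$-separable).

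Third, invoke the standard cardinal arithmetic under $MA + \neg CH$, namely $\mathfrak{d} = \mathfrak{c} > \omega_1$. Chaining these together yields
\begin{equation}
\mathfrak{ds} = \omega_1 < \mathfrak{c} = \mathfrak{d} \leq \mathfrak{cds},
\end{equation}
which is exactly the desired strict inequality.

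The only step that requires a moment of care is confirming that $\mathfrak{d} \leq \mathfrak{cds}$ in the form needed, i.e.\ that a countable non-$D$-separable dense subspace of $2^\kappa$ forces $\kappa \geq \mathfrak{d}$. Since every $D$-separable space is $M$-separable (pick one point from each $E_n$ inductively to obtain finite selectors), a countable non-$M$-separable subspace also fails $D$-separability in the countable case, so the \cite{BBM} bound transfers directly. After that observation, the corollary is just arithmetic and presents no obstacle.
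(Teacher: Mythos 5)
Your overall route is exactly the paper's: chain $\mathfrak{ds}=\omega_1$ (Theorem \ref{Lsp}), $\mathfrak{d}\leq\mathfrak{cds}$ (recorded just before the definition of the corollary), and $\mathfrak{d}=\mathfrak{c}>\omega_1$ under $MA+\neg CH$. Since the inequality $\mathfrak{d}\leq\mathfrak{cds}$ is already established in the paper, the corollary does follow from your chain.

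However, your attempted re-derivation of $\mathfrak{d}\leq\mathfrak{cds}$ has the key implication backwards. You assert that ``every $D$-separable space is $M$-separable'' and that ``a countable non-$M$-separable subspace also fails $D$-separability.'' The implication that actually holds, and that is actually needed, is the opposite one: $M$-separability implies $D$-separability, because finite sets are discrete, so the finite selectors $F_n$ witnessing $M$-separability already witness $D$-separability. Its contrapositive says that a countable non-$D$-separable dense subspace of $2^\kappa$ is non-$M$-separable, which is precisely what forces $\kappa\geq\mathfrak{d}$ via the \cite{BBM} characterization. Your stated implication, even if it were true, would only yield $\mathfrak{cds}\leq\mathfrak{d}$ (apply it to the witness for $\mathfrak{d}$), i.e.\ the reverse inequality. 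Moreover the parenthetical justification ``pick one point from each $E_n$ inductively to obtain finite selectors'' does not work: selecting one point from each discrete $E_n$ produces singletons whose union need not be dense, and indeed $D$-separability does not imply $M$-separability in general (a $D$-separable space need not even be separable). So the final statement is reached, but the one step you singled out as ``requiring a moment of care'' is argued in the wrong direction; replace it with the observation that finite sets are discrete.
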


\begin{proof}
$\mathfrak{ds}=\aleph_1$ is true in every model of ZFC. Since $\mathfrak{d} \leq \mathfrak{cds}$ and $\mathfrak{d}=\mathfrak{c}$ in every model of MA the statement of the corollary follows from the failure of CH.
\end{proof}

The authors of \cite{BMS} ask \emph{what is $\mf{cds}$}? Up to this point it was even unknown whether $\mf{cds}$ could consistently be less than the continuum, so the following theorem may be considered a partial answer to Question 43 of \cite{BMS}.

\begin{theorem}\label{tm:cof_ds}
If $cof(\mc M)={\omega}_1$ then  $2^{\omega_1}$ has a countable dense subspace  
which is not NWD-separable. So $\mf{cds}={\omega}_1$.
\end{theorem}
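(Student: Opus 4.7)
The plan is to construct a countable, crowded, dense subspace $X \subseteq 2^{\omega_1}$ together with a partition $X = \bigsqcup_{n \in \omega} D_n$ into dense subsets such that $X$ is $\mc{D}$-forced in the sense of Example~\ref{ex:countable_not_NWD}: every somewhere dense $D \subseteq X$ contains $D_n \cap U$ for some $n$ and some nonempty open $U \subseteq 2^{\omega_1}$. Granted such an $X$, the argument from Example~\ref{ex:countable_not_NWD} carries over verbatim: if $E_n \subseteq D_n$ is nowhere dense for every $n$ and $\bigcup_n E_n$ were dense, it would be somewhere dense, so by $\mc{D}$-forced-ness would contain some $D_m \cap U$, and then $E_m = \bigl(\bigcup_n E_n\bigr) \cap D_m \supseteq D_m \cap U$ would be dense in $U$, contradicting $E_m$ being nowhere dense. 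The bound $\mf{cds} \le \omega_1$ then follows because in a crowded $T_1$ space every discrete set is nowhere dense, so non-$NWD$-separability implies non-$D$-separability; the converse bound $\mf{cds} \ge \omega_1$ is immediate, since every countable subspace of $2^\omega$ has a countable $\pi$-base and is therefore $R$-separable, a fortiori $D$-separable.

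To build $X$ I would represent it as the rows of a $\{0,1\}$-valued $\omega \times \omega_1$ matrix $A$, setting $x_n(\alpha) = A_{n,\alpha}$, and fix a partition $\omega = \bigsqcup_n A_n$ into infinite sets with $D_n = \{x_k : k \in A_n\}$. Let $I(s) = \{k < \omega : A_{k,\alpha} = s(\alpha) \text{ for all } \alpha \in \dom s\}$ for $s \in Fn(\omega_1, 2)$. Density of $X$ and of each $D_n$ in $2^{\omega_1}$ is equivalent to the ``double independence'' property $|I(s) \cap A_n| = \omega$ for every $s$ and every $n$, which is arranged along a Fichtenholz--Kantorovich style construction. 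The $\mc{D}$-forced condition then translates into the combinatorial statement that every $B \subseteq \omega$ meeting $I(s')$ for every $s' \supseteq s$ must contain $A_n \cap I(t)$ for some $n$ and some $t \supseteq s$.

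The hypothesis $cof(\mc M) = \omega_1$ enters through a cofinal family $\{M_\alpha : \alpha < \omega_1\}$ of meager $F_\sigma$ subsets of $2^\omega$. I would define the columns of $A$ by a transfinite recursion of length $\omega_1$: at stage $\alpha$, choose the $\alpha$-th column so as to preserve the double-independence commitments from earlier stages, while simultaneously diagonalizing against the $\alpha$-th potential obstruction to $\mc{D}$-forced-ness, encoded by $M_\alpha$. The underlying duality is that, viewing the rows as varying while the columns are fixed, ``nowhere dense in $X$'' translates into a meager condition on the space of column-patterns, which is homeomorphic to $2^\omega$; the cofinal family $\{M_\alpha\}$ thus suffices to anticipate every such obstruction.

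The main technical obstacle is precisely this recursive bookkeeping at stage $\alpha$: one must verify that the countably many density and independence commitments already active leave enough freedom to both diagonalize against $M_\alpha$ and add a new $\{0,1\}$-column consistent with double independence. This is the $\omega_1$-analogue of the construction in \cite[Theorem 4.9]{JSS}, which needed $\mf c$ many coordinates to rule out all obstructions; the cofinal $\omega_1$-sequence $\{M_\alpha\}$ is what allows the recursion here to terminate at $\omega_1$ instead.
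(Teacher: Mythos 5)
Your reduction of the theorem to the existence of a countable dense $\mathcal{D}$-forced subspace of $2^{\omega_1}$ is where the argument breaks down, and the gap is not just bookkeeping. Full $\mathcal{D}$-forcedness quantifies over \emph{all} somewhere dense subsets $B$ of the countable underlying set: for each such $B$ that is not going to contain any $D_n\cap U$ one must add a coordinate (a new $2$-partition) witnessing that $B$ is in fact nowhere dense. There are $2^{\omega}$ such candidate sets $B$, which is exactly why the construction of \cite[Theorem 4.9]{JSS} uses $\mathfrak{c}$ coordinates. Your proposal offers no mechanism by which a cofinal $\omega_1$-family of meager sets ``anticipates'' continuum many obstructions of this kind; the duality you invoke (nowhere density of a subset of $X$ corresponding to a meager condition on column-patterns) is a statement about \emph{nowhere dense} sets, whereas the obstructions to $\mathcal{D}$-forcedness are arbitrary \emph{somewhere dense} sets, and these are not parametrized by meager subsets of a Polish space in any way your sketch makes precise. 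Under $cof(\mathcal M)=\omega_1<\mathfrak c$ (e.g.\ in the random real model) the recursion of length $\omega_1$ you describe simply runs out of coordinates before the obstructions run out.

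The paper sidesteps this by aiming at a strictly weaker target. It fixes \emph{one} sequence $\langle F^j\rangle$ of dense sets in advance and only needs to defeat choices $E_j\subseteq F^j$ where each $E_j$ is \emph{nowhere dense} in the final topology. Two facts make $\omega_1$ coordinates suffice for this: (i) nowhere density reflects to countable stages --- a witnessing dense open set is a countable union of basic open sets, each depending on finitely many coordinates, so every $E_j$ nowhere dense in the final space is already nowhere dense in some countable-stage approximation $Y_\sigma$, which is homeomorphic to $\mathbb Q$; and (ii) by Fremlin's theorem $cof(nwd_{\mathbb Q})=cof(\mathcal M)=\omega_1$, so at each stage one can fix an $\omega_1$-sized family of sequences of nowhere dense sets that is cofinal under coordinatewise inclusion. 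Each such sequence $\langle A^j_\nu\rangle$ is then killed by a single new coordinate that absorbs $\bigcup_j A^j_\nu$ into one half $C^0_\nu$ of a partition, so that any dominated $\bigcup_j E_j$ misses the open set $C^1_\nu$. If you want to salvage your outline, replace ``$\mathcal{D}$-forced'' by this property relative to a fixed sequence of dense sets, and make the reflection argument in (i) explicit --- without it there is no reason the nowhere dense sets of the final space are captured by the cofinal families chosen at countable stages. (Your remarks on why non-$NWD$-separability of a crowded space gives $\mathfrak{cds}\le\omega_1$, and why $\mathfrak{cds}\ge\omega_1$, are fine.)
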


We recall some definitions from \cite{JSS}.

Let $S$ be  a set, 
and
\begin{equation}\notag
{\mbb B}=\left\{\<B_{\zeta}^0,B_{\zeta}^1\>:{\zeta}<{\mu}
\right\} 
\end{equation}
 be a family of partitions of $S$.
We say that
${\mbb B}$ is {\em independent} iff
\begin{displaymath}
{\mbb B}[\varepsilon]\stackrel{def}=
\bigcap\{B^{{\varepsilon}({\zeta})}_{\zeta}:{\zeta}\in\dom
{\varepsilon} \}\ne\empt
\end{displaymath}
for each ${\varepsilon}\in Fn(\mu,2;\omega)$. 
${\mbb B}$ is {\em separating } iff for each $\{{\alpha},{\beta}\}\in \br S
;2;$ there are ${\zeta}<\mu$ and ${\rho}\ne {\nu}<2
$ such that ${\alpha}\in B^{\rho}_{\zeta}$
and ${\beta}\in B^{\nu}_{\zeta}$.

We shall denote by $\tau_{\mbb B}$ the (obviously zero-dimensional)
topology on $S$ generated by the subbase $\{B_{\zeta}^0,B_{\zeta}^1: \zeta <
\mu\}$, moreover we set
$X_{{\mbb B}}=\<{S},{\tau}_{{\mbb B}}\>$. Clearly, the family
$\{{\mbb B}[{\varepsilon}]:{\varepsilon}\in Fn(\mu,2;\omega)\}$
is a base for the space $X_{{\mbb B}}$. Note that $X_{{\mbb B}}$ is
Hausdorff iff ${\mbb B}$ is separating.

\begin{obs}\label{lm:embed}
Let ${\lambda}$ be an infinite cardinals. Then, up to
homeomorphisms, there is a natural one-to-one correspondence
between countable dense subspaces $X$ of ${D(2)}^{\lambda}$  and  spaces of the form 
$X_{{\mbb B}} =
\langle\omega,{\tau}_{{\mbb B}}\rangle$, where ${\mbb B} = \{\langle B_{\xi}^0,B_{\xi}^1\rangle
: {\xi}<\lambda\}$ is a separating and independent
family of $2$-partitions of $\omega$. 
\end{obs}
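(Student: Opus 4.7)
The plan is to construct the correspondence in both directions and check that the required conditions on $\mathbb B$ translate exactly into the defining properties of a countable dense subspace of $D(2)^{\lambda}$.

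First I would set up the forward direction. Given a countable dense $X\subseteq D(2)^{\lambda}$, enumerate $X=\{x_n:n<\omega\}$ injectively, so that we identify $X$ with $\omega$ as a set. For each coordinate $\xi<\lambda$, define the 2-partition
$$B_\xi^i=\{n<\omega : x_n(\xi)=i\}\quad\text{for } i\in\{0,1\},$$
and let $\mathbb B=\{\langle B_\xi^0,B_\xi^1\rangle:\xi<\lambda\}$. The standard subbasic clopen sets of $D(2)^{\lambda}$ pull back under the enumeration exactly to the subbasic sets of $\tau_{\mathbb B}$, so the enumeration is a homeomorphism of $X$ onto $X_{\mathbb B}$. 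Now $\mathbb B[\varepsilon]\neq\emptyset$ for every $\varepsilon\in Fn(\lambda,2;\omega)$ precisely because $X$ meets every nonempty basic clopen cylinder in $D(2)^{\lambda}$, i.e.\ because $X$ is dense; this is the \emph{independent} part. Moreover the enumeration being injective means that for any two distinct $n\neq m$ in $\omega$ the points $x_n, x_m$ differ on some coordinate $\xi$, which is exactly the \emph{separating} condition for $\mathbb B$.

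For the reverse direction, let $\mathbb B=\{\langle B_\xi^0,B_\xi^1\rangle:\xi<\lambda\}$ be a separating independent family of 2-partitions of $\omega$. Define $e:\omega\to D(2)^{\lambda}$ by
$$e(n)(\xi)=i \iff n\in B_\xi^i.$$
This is well-defined since each $\langle B_\xi^0,B_\xi^1\rangle$ is a partition, and it is injective by the separating assumption. A direct check shows $e^{-1}$ of a subbasic clopen set of $D(2)^{\lambda}$ is a subbasic set of $\tau_{\mathbb B}$, so $e$ is a homeomorphism onto its image. Finally, the image $e[\omega]$ is dense in $D(2)^{\lambda}$ exactly because $\mathbb B[\varepsilon]\neq\emptyset$ for every $\varepsilon\in Fn(\lambda,2;\omega)$, i.e.\ because $\mathbb B$ is independent.

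Finally I would verify that the two constructions are inverse to one another up to the choice of enumeration/homeomorphism: starting from $X$, building $\mathbb B$, and then applying the map $e$ reproduces (a relabelling of) $X$; and starting from $\mathbb B$, forming $e[\omega]$ with the inherited enumeration, and reading off the partitions gives back $\mathbb B$. Since the statement is only ``up to homeomorphism,'' no serious obstacle arises here; the only thing worth being careful about is that the correspondence genuinely uses $\mathbb B$ being both independent (for density) and separating (for injectivity/$T_2$), which matches the hypotheses.
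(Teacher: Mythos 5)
Your argument is correct and is exactly the standard correspondence: the paper states this as an Observation with no proof at all (it is folklore, implicit in the setup borrowed from \cite{JSS}), and your identification of \emph{independence} with density of $X$ in every basic cylinder and of \emph{separating} with injectivity of the enumeration (equivalently, Hausdorffness of $X_{\mbb B}$) is precisely what the omitted verification consists of. Nothing is missing.
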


\begin{proof}[Proof of Theorem \ref{tm:cof_ds}]
By the Hewitt-Marczewski-Pondiczery Theorem, \cite[Theorem
2.3.15]{En},
there are partitions 
\begin{equation}\notag
 \left\{\<F^j:j<{\omega}\>\right\}\cup\left\{\<B_{\zeta}^i:i<2\>:{\zeta}<{\omega}_1
\right\}
\end{equation}
of ${\omega}$ such that
\begin{equation}\notag
 {\mbb B}=\left\{\<B_{\zeta}^i:i<2\>:{\zeta}<{\omega}_1
\right\} 
 \end{equation}
is separating and 
\begin{equation}\label{FB}
F_j\cap  {\mbb B}[{\varepsilon}]\ne\empt 
\end{equation}
for all $j\in {\omega}$ and 
${\varepsilon}\in Fn({\omega_1},2;\omega;\omega)$,
We can assume that 
\begin{equation}
\forall x\ne y\in {\omega}\ \exists n<{\omega} 
\ (x\in B^0_n \land y\in B^1_n.)
\end{equation}

Next, let us fix any partition $\{I_{\nu}:{\omega}\le {\nu}<{\omega}_1\}$ 
of ${\omega}_1\setm {\omega}$ into uncountable pieces with 
${\nu}\cap I_{\nu}=\empt$ and then by transfinite recursion on
${\omega}\le {\nu}<{\omega}_1$ define
\begin{itemize}
\item sequences $\<A^k_{\alpha}:k<{\omega}\>$ for ${\alpha}\in I_{\nu}$,
\item partitions $
\<C^0_\nu, C^1_\nu\>$ of ${\omega}$,
\end{itemize}
such that the inductive hypothesis
\begin{equation}\tag{$\phi_{\nu}$}
\forall\varepsilon \in Fn({\omega_1},2;\omega)\ 
\forall j<{\omega} 
|F^j\cap {\mbb B}_{{\nu}}[\varepsilon]|={\omega}
\end{equation}
holds,
where
\begin{displaymath}
{\mbb B}_{\nu}=\Bigl\{\<C^0_{\sigma},C^1_{\sigma}\>:{\omega}\le{\sigma}<\nu\Bigr\}
\cup \Bigl\{\<B^0_{\sigma},B^1_{\sigma}\>:  {\sigma}\in 
{\omega}\cup ({\omega_1}\setm \nu)\Bigr\}.
\end{displaymath}

Note that $(\phi_{\nu})$ simply says that every set $F^j$
is dense in the space $X_{{\mbb B}_{\nu}}$. We shall then
conclude that $\mbb C={\mbb B}_{{\omega_1}}$ is as required.

Let us observe first that $(\phi_{\omega})$ holds  because (\ref{FB}) holds and 
$
{\mbb B}[{\varepsilon}]={\mbb B}_{\omega}[{\varepsilon}]$.

Clearly, if ${\nu}$ is a limit ordinal and $(\phi_{\zeta})$ holds for
each ${\zeta}<{\nu}$ then $(\phi_{\nu})$ also holds. So the induction
hypothesis is preserved in limit steps.

Now consider a $\nu<\omega_1$ and assume that $(\phi_{\nu})$
holds.

Let 
\begin{equation}
C^0_{\nu}=B^0_{\nu}\cup\bigcup_{j<{\omega}}A^j_{\nu}; \quad
C^1_{\nu}=B^1_{\nu}\setm\bigcup_{j<{\omega}}A^j_{\nu}; 
\end{equation}
and
let
\begin{displaymath}
{\mbb B}'_{\nu}=\Bigl\{\<C^0_{\sigma},C^1_{\sigma}\>:{\omega}\le{\sigma}<\nu\Bigr\}
\cup \Bigl\{\<B^0_{\sigma},B^1_{\sigma}\>:  {\sigma}\in 
{\omega}\Bigr\},
\end{displaymath}
and consider the space $Y_{\nu}=\<{\omega},\tau_{\mbb B'_{\nu}}\>$.
Clearly $Y_{\nu}$ is homeomorphic to $\mbb Q$.

Let 
\begin{equation}
\mbb A_{\nu}=\{\<A_i:i<{\omega}\>: A_i\subs F_i, \text{$A_i$ is nowhere dense in
$Y_{\nu}$}\}. 
\end{equation}
If  $ A=\<A_i:i<{\omega}\>$ and $A'=\<A'_i:i<{\omega}\>$
are from $\mbb A_{\nu}$ let $A\prec A'$ iff $A_i\subseteq A'_i$ for 
each $i<{\omega}$.
 
Fremlin (see  \cite[Theorem 1.6]{BaHEHr}  ) proved that
\begin{equation}
cof(\mc M)=cof(nwd_{\mbb Q}), 
\end{equation}
where $nwd_{\mbb Q}$ is the family of nowhere dense subsets of $\mbb Q$.

Since the disjoint union of ${\omega}$ copies of $\mbb Q$ is homeomorphic to 
$\mbb Q$, we have 
\begin{equation}
cof(\mbb A_{\nu},\prec)=cof(nwd_{\mbb Q})=cof(\mc M)={\omega}_1. 
\end{equation}
Let $\{A_{\alpha}:{\alpha}\in I_{\nu}\}$ enumerate a cofinal subset of 
$\mbb A_{\nu}$.
Write $A_{\alpha}=\<A^i_{\alpha}:i<{\omega}\>$.

We  have to show that 
$(\phi_{{\nu}+1})$ holds. 

Assume, indirectly,
that for some $j<{\omega}$ and ${\varepsilon}\in Fn({\omega}_1,2;\omega)$
we have
\begin{equation}\notag
F_j\cap \mbb B_{{\nu}+1}[{\varepsilon}]=\empt.
\end{equation}
Fix  $\sigma\le\nu $
 with ${\nu}\in I_\sigma$.

Let ${\eta}=\varepsilon\restriction \sigma$.
Since $\<A^i_{\nu}:i<{\omega}\>\in \mbb A_\sigma$,
the set  $A^j_{\nu}$ was nowhere dense in $Y_\sigma$, i.e. there is
$\eta'\in Fn(\sigma,2;\omega)$, $\eta'\supseteq \eta$, such that 
\begin{equation}
\mbb B'_\sigma[\eta']\cap A^j_{\nu}=\empt.
\end{equation}
But 
\begin{equation}
\mbb B'_\sigma[\eta']= \mbb B_{\nu}[\eta']=\mbb B_{\nu+1}[\eta'],
\end{equation}
so 
\begin{equation}
(F^j\setm A^j_{\nu})\cap \mbb B_{\nu}[\eta'\cup\varepsilon]=
F^j\cap \mbb B_{\nu}[\eta'\cup\varepsilon]\ne\empt 
\end{equation}
by $(\phi_{{\nu}})$.
Thus 
\begin{equation}\notag
F^j\cap \mbb B_{{\nu}+1}[{\varepsilon}]\supset 
F^j\cap \mbb B_{{\nu}+1}[\eta'\cup {\varepsilon}]\supset
(F^j\setm A^j_{\nu})\cap \mbb B_{\nu}[\eta'\cup\varepsilon]\ne\empt,
\end{equation}
contradiction; the relation in the middle follows from the fact that $(F^j\setminus A^j_\nu)\cap C^i_\nu=(F^j\setminus A^j_\nu)\cap B^i_\nu$.

Finally we show that the sequence $\<F^j:j<{\omega}\>$ witnesses that 
$X_{\mbb C}$ is not NWD-separable.
Assume that $E_i\subs F^i$ is nowhere dense; being nowhere dense is witnessed by a dense open set which, in turn, is the countable union of basic open sets.
Thus there is $\sigma<\omega_1$ such that $E_i$ is nowhere dense in 
$Y_{\sigma}$ for all $i<\omega$. Then there is $\nu \in I_\sigma$ such that 
$E_i\subs A^i_{\nu}$ for $i<{\omega}$.

Then  $\bigcup\{E_i:i\in {\omega}\}\subs \bigcup\{A^i_{\nu}:i\in {\omega}\}
\subs C^0_{\nu}$, i.e. $\bigcup\{E_i:i\in {\omega}\}$ is not dense because
it does not intersect $C^1_\nu$.
\end{proof}

The following figure summarizes the (trivial) implications between  separation properties we considered in this section.
The labels of the arrows indicate the examples showing that the implications cannot be reversed.

\newcommand{\ssnode}[2]{\node (#1) [ pro] {\small #2};}

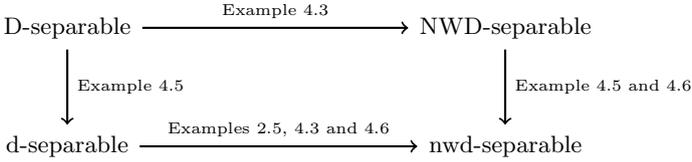
\begin{figure}[h!]

\begin{tikzpicture}
[pro/.style={ inner sep=4pt,minimum size=4mm,draw=white},back line/.style={densely dotted},
        normal line/.style={-stealth},
        cross line/.style={normal line,
           preaction={draw=white, -, 
           line width=6pt}},
]

\matrix[row sep=10mm,column sep=35mm]{

\ssnode {Dsep}{D-separable}; &  \ssnode {NWDsep}{NWD-separable};
\\

\ssnode {dsep}{d-separable}; &  \ssnode {nwdsep}{nwd-separable};
\\
};

\path[->]

  (Dsep) edge[thick]  node[auto] {\tiny Example \ref{ex:NWD_not_d} }   (NWDsep) 
  (Dsep) edge[thick]  node[auto] {\tiny Example  \ref{ex:countable_not_NWD} }  (dsep)

  (NWDsep) edge[thick]  node[auto] {\tiny Example  \ref{ex:countable_not_NWD} and \ref{ex:nwd_not_d_not_NWD}  }  (nwdsep) 
  (dsep) edge[thick]  node[auto] {\tiny Examples  \ref{ex:nwdnotd},  \ref{ex:NWD_not_d}  and   \ref{ex:nwd_not_d_not_NWD} }   (nwdsep)

;

\end{tikzpicture}

 \caption{Separation results in ZFC}

\end{figure}

We will get further consistency results in the next section, however following question remained open.
\begin{question}
 Is there a d-separable, NWD-separable, non D-separable space in ZFC?
\end{question}

\section{In the class of first-countable spaces - forcing counterexamples} \label{force}

In \cite{BD} Barman and Dow proved that every separable Fr\'echet space is $M$-separable. 
In \cite{GS} G. Gruenhage and M. Sakai observed  that separable Fr\'echet spaces are $R$-separable. The aim of this section is showing that no theorem of this kind can be proved in the context of $d$-separability ($nwd$-separability) 
and $D$-separability ($NWD$-separability), not even if one replaces Fr\'echet with first-countable. 

First, let us start with a lemma. 

\begin{lemma} \label{martin} Suppose that we fixed some ideal $\mbb A_X$ for each space $X$ as in Theorem \ref{tm:union}. If $X=\bigcup_{n\in\omega} A_n$ for some $A_n\in \mbb{A}_X$ then $MA_{\pi(X)}(countable)$ implies that $X$ is $\mbb A$-separable.
\end{lemma}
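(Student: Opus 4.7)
The plan is to apply $MA_{\pi(X)}(\text{countable})$ with a very simple countable ``choice'' poset. Let $\{D_n:n<{\omega}\}$ be a sequence of dense subsets of $X$; I first fix a $\pi$-base $\mc B$ of $X$ with $|\mc B|\le \pi(X)$. The key observation is that if each $E_n$ is forced to lie inside a single $A_{m_n}$, then $E_n\in \mbb A_X$ holds automatically by the ideal axiom~(a); so the only datum that needs to be built is the function $n\mapsto m_n$.

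Let $P$ consist of the finite partial functions $p:F\to{\omega}$ with $F\in \br \omega;<\omega;$, ordered by reverse inclusion (i.e.\ $p'\le p$ iff $p'\supseteq p$). Then $P$ is clearly countable. For each $B\in \mc B$ put
\[
\Delta_B=\bigl\{p\in P:\exists n\in \dom(p),\ D_n\cap A_{p(n)}\cap B\ne \empt\bigr\}.
\]
To see that $\Delta_B$ is dense, take any $p\in P$, pick $n_0\in {\omega}\setm \dom(p)$, and choose $y\in D_{n_0}\cap B$ using density of $D_{n_0}$; since $X=\bigcup_m A_m$ there is $m_0$ with $y\in A_{m_0}$, and the extension $p\cup\{(n_0,m_0)\}$ lies in $\Delta_B$.

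Applying $MA_{\pi(X)}(\text{countable})$ to $P$ and the family $\{\Delta_B:B\in \mc B\}$ produces a filter $G\subs P$ meeting every $\Delta_B$. By filter compatibility any two conditions in $G$ agree on their common domain, so setting $f(n)=p(n)$ for any $p\in G$ with $n\in \dom(p)$ (and $f(n)=0$ otherwise) yields a well-defined $f:{\omega}\to{\omega}$. Put $E_n=D_n\cap A_{f(n)}$. Then $E_n\subs A_{f(n)}\in \mbb A_X$, so $E_n\in \mbb A_X$ by (a); also $E_n\subs D_n$; and for every $B\in \mc B$ some $p\in G\cap \Delta_B$ furnishes an $n\in \dom(p)$ with $E_n\cap B=D_n\cap A_{f(n)}\cap B\ne \empt$. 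Hence $\bigcup_n E_n$ meets every member of the $\pi$-base $\mc B$, so it is dense in $X$.

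The main technical point is keeping the poset countable even when $X$, $\mc B$, or the individual sets $D_n\cap A_m$ are large: this is handled by encoding a condition as a finite commitment of the form ``$E_n\subs A_{p(n)}$'' rather than as a list of actual points of $X$, after which the points enter for free by simply taking $E_n=D_n\cap A_{f(n)}$. Notice that only axiom~(a) of Theorem~\ref{tm:union} is needed in the argument; axioms (b) and (c) play no role here.
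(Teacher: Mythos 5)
Your proof is correct and is essentially the paper's own argument: the same Cohen-type poset of finite partial functions $n\mapsto m$, the same dense sets $\Delta_B$ indexed by a $\pi$-base, and the same definition $E_n=D_n\cap A_{f(n)}$. You are merely a bit more careful about totalizing the generic function and in noting that only the ideal axiom is used, but the route is identical.
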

\begin{proof}
 We fix a $\pi$-base $\mathcal{B}$ of $X$ of size $\pi(X)$ and a sequence of dense sets $D_n\subseteq X$. Let us define $D_B=\{p\in \mbb C: \exists n\in \dom(p):D_n\cap A_{p(n)}\cap B\neq \emptyset\}$ for $B\in \mc B$ where $\mbb C$ denotes the Cohen poset; note that each $D_B$ is dense in $\mbb C$. Consider a filter $G\subseteq \mbb C$ which is generic to $\{D_B:B\in \mc B\}$; this exists by $MA_{\pi(X)}(countable)$. Let $g=\cup G$ and define $E_n=D_n\cap A_{g(n)}\in \mbb{A}_X$. 

We claim that $\bigcup_{n\in\omega}E_n$ is dense in $X$. It suffices to show that for every $B\in \mc B$ there is $n\in\omega$ such that $B\cap E_n\neq \emptyset$. As $G\cap D_B\neq\emptyset$, there is $n\in\omega$ such that $A_{g(n)}\cap D_n\cap B\neq \emptyset$; that is $E_n\cap B\neq \emptyset$. 
\end{proof}

An uncountable  space $X$ is {\em Luzin} iff every nowhere dense subset of $X$ is countable. 
We continue by our main theorem:

\begin{example}\label{ex:unified}
It is consistent that there is a left-separated in type $\oo$, first countable,   
0-dimensional Hausdorff space of size $\oo$ such that 
$X$ has a partition $X=Z\cup T\cup Y$ into dense uncountable subspaces such that 
\begin{enumerate}[(1)]
\item $T$ is D-separable;
\item $Y$ is NWD-separable, but not d-separable;
 \item $Z$ is Luzin, so it is not nwd-separable.
\end{enumerate}
Moreover,
\begin{enumerate}[(1)]\addtocounter{enumi}{3}
 \item $T\cup Z$ is d-separable but not  NWD-separable.
\item $Y\cup Z$ is nwd-separable, but not d-separable and not NWD-separable.
\item $T\cup Y$ is d-separable, NWD-separable, but not D-separable.
\end{enumerate}
\end{example}

\begin{figure}[h!]

\begin{tikzpicture}
[pro/.style={rectangle, inner sep=4pt,minimum size=4mm,draw=black},
prosplit/.style=  {inner sep=4pt,minimum size=4mm,draw=black,rectangle split,rectangle split parts=2,
rectangle split part fill={white, black!10}}]

\matrix[row sep=2mm,column sep=1mm]{

&\node (g-1) {};&&\node (g1) {};&\node (g2) {};&\node (g3) {};&\node (g4) {};
&&\node (g5) {};&\node (g6) {};&\node (g7) {};&\node (g8) {};& \node  {$T\cup Y$};\\
&\node (f-1) {};&&\node (f1) {};&\node (f2) {};&\node (f3) {};&\node (f4) {};
&&\node (f5) {};&\node (f6) {};&\node (f7) {};&\node (f8) {};&\node  {$Y$};\\
&&\snode {D-separable}{D-separable}; &&&&& 
\snode{NWD-separable}{NWD-separable}
;\\
\node (e-2) {};&\node (e-1) {};&&\node (e1) {};
&\node (e2) {};&\node (e3) {};&\node (e4) {};
&&\node (e5) {};&\node (e6) {};&\node (e7) {};&\node (e8) {};
\\
&\node (d-1) {};&&\node (d1) {};&\node (d2) {};&\node (d3) {};&\node (d4) {};
&&\node (d5) {};&\node (d6) {};&\node (d7) {};&\node (d8) {};&\node  {$T\cup Z$};
\\
&\node (c-1) {};&&\node (c1) {};&\node (c2) {};&\node (c3) {};&\node (c4) {};
&&\node (c5) {};&\node (c6) {};&\node (c7) {};&\node (c8) {};&\node  {$Y\cup Z$};
\\
&&\snode {d-separable}{d-separable}; 
&&&&& 
\snode {nwd-separable}{nwd-separable};\\
&&\node (b1) {};&\node (b2) {};&\node (b3) {};&\node (b4) {};
&&&\node (b5) {};&\node (b6) {};&\node (b7) {};&\node (b8) {};& \node  {$Z$};\\
\node (a-2) {};&
\node (a-1) {};&&\node (a1) {};&\node (a2) {};&\node (a3) {};&\node (a4) {};
&&\node (a5) {};&\node (a6) {};&\node (a7) {};&\node (a8) {};\\
};

\path[->]

  (D-separable) edge[thick] (NWD-separable) 
  (D-separable) edge[thick] (d-separable)

 (NWD-separable) edge[thick]  (nwd-separable) 
 (d-separable)  edge[thick] (nwd-separable)
;

\path[-, dashed]  (a5.south) edge[thin] (b5.south) (b5.south)  edge[thin] (b7.south) ;

\path[-, dashed]  (a4.south) edge[thin] (c4.south) (c4.south)  edge[thin] (c7.south) ;

\path[-, densely dotted]  (a-1.south) edge[thin] (d-1.south) (d-1.south) edge[thin] (d7.south) ;

\path[-, dashed]  (a-2.south) edge[thin] (e-2.south) (e-2.south) edge[thin] (e2.south)
                   (e2.south) edge[thin] (g2.south)
                   (g2.south) edge[thin] (g7.south);
 ;

\path[-, loosely dashed]  (a3.south) edge[thin] (f3.south) (f3.south) edge[thin] (f7.south) ;

\end{tikzpicture}

 \caption{}

\end{figure}

\begin{proof}[Proof of Theorem \ref{ex:unified}]
First we show that (4)-(6) follows automatically 
from (1)--(3):

\begin{enumerate}[(1)]\addtocounter{enumi}{3}
 \item $T$ is  a dense, D-separable subspace of $T\cup Z$, 
so $T\cup  Z$ is d-separable.   
$Z$ is  a dense, not nwd-separable subspace of $T\cup Z$,
so $T\cup  Z$ is not NWD-separable.

\item $Y$ is  a dense, NWD-separable subspace of $Y\cup Z$, 
so $Y\cup  Z$ is nwd-separable.
$Z$ is  a dense, not nwd-separable subspace of $Y\cup Z$,
so $Y\cup  Z$ is not NWD-separable.

Assume that $\{F_n:n\in {\omega}\}$ are discrete subspaces of 
$Y\cup Z$, and let $F=\bigcup\{F_n:n\in {\omega}\}$. 
Then $F_n\cap Z$ is discrete, hence nowhere dense and so
countable. Thus  $F\cap Z$ is countable.
Since $X$ is left-separated in type $\oo$, it follows that
$F\cap Z$ is nowhere dense.
But $Y$ is not d-separable, so $X\ne \overline{F\cap Y}$.
Thus $X \ne \overline{F}$, i.e. $Y\cup Z$ is not d-separable.

\item $T\cup Y$ is the union of two NWD-separable spaces, so it is NWD-separable 
by Theorem \ref{cor:union}.
$T$ is  a dense, D-separable subspace of $T\cup Y$, 
so $T\cup  Y$ is d-separable.
$Y$ is  a dense, not d-separable subspace of $Y\cup Z$, 
so $Y\cup  Z$ is not D-separable.
\end{enumerate}

Now we define a poset $\mbb Q$ which   has property K, thus ccc, and forces a
left-separated, first countable,  0-dimensional Hausdorff   
topology on the set $X = \oo \times ({\omega}+1)$ such that $X$ has a 
partition $X=Z\cup T\cup Y$ into dense uncountable subspaces with the 
following properties: 
\begin{enumerate}[(A)]
\item $T$ is $\sigma$-discrete,
\item $Y$ is $\sigma$-nowhere dense and $s(Y)={\omega}$, i.e. every discrete subset of $Y$ is countable,
 \item $Z$ is Luzin.
\end{enumerate}  
We will do this in
such way, that a condition $p\in  \mbb Q$ will be a finite approximation of a countable
neighborhood base.

For $i<2$ let $I_i=\{n\in {\omega}: n \equiv i \mod 2\}$, and let 
$I_2=\{{\omega}\}$.
The underlying set of $T$, $Y$ and $Z$ will be 
 $\oo\times I_0$, $\oo\times I_1$ and $\oo\times I_2$, respectively.

We will use the following notations: if $x = ({\alpha}, k) \in X$ let
\begin{equation}
Q_x=\left \{
\begin{array}{ll}
\text{$[{\alpha},{\omega}_1)\times [k+1,{\omega}]\cup \{x\}$}&\text{if $k\in I_0,$}\\
\text{$[{\alpha},{\omega}_1)\times [k,{\omega}]$}&\text{if $k\in I_1,$}\\
\text{$[{\alpha},{\omega}_1)\times [0,{\omega}]$}&\text{if $k\in I_2,$}
\end{array}
\right . 
\end{equation}
see Figure 4.

\begin{figure}[h!]

\begin{tikzpicture}[scale=0.8]

\draw    (0,1)  node {$0$};
\draw[thin]   (1,1)  -- (10,1);

\draw    (0,2)  node {$1$};
\draw[thin]   (1,2)  -- (10,2);
\node (x) at (3,2)  {$\bullet$};
\node[below]  at (x.east)  {$x$};
\node (xx) at (2.5,1.5)  {};
\node[left]  at (xx.west)  {$Q_x$};
\draw[thick]   (2.5, 4.5) --   (2.5, 1.5)   --   (6,1.5);

\draw    (0,3)  node {$2$};
\draw[thin]   (1,3)  -- (10,3);
\node (y) at (6,3)  {$\bullet$};
\node[below]  at (y.east)  {$y$};
\node (yy) at (5.5,2.5)  {};
\node[left]  at (yy.west)  {$Q_y$};
\draw[thick, dashed]    (5.5, 5.5) --    (5.5, 2.5)  --  (6.5, 2.5)  --  (6.5, 3.5)   --   (9.5, 3.5)    ;

\draw[thin]   (1,4)  -- (10,4);

\draw    (0,6)  node {$\omega$};
\draw[thin]   (1,6)  -- (10,6);
\node (z) at (4,6)  {$\bullet$};
\node[below]  at (z.east)  {$z$};
\node (zz) at (3.5,6.5)  {};
\node[left]  at (zz.west)  {$Q_z$};
\draw[thick]   (3.5, 4.5) --   (3.5, 6.5)   --   (8.5,6.5);

\end{tikzpicture}
 \caption{}
\end{figure}

Let $\mbb Q$ consist of the following conditions
\begin{equation}\notag
p=\<I^p, n^p, \<U^p(x,j):x\in I^p, j<n^p\>\> 
\end{equation}
such that 
\begin{enumerate}[({Q}-a)]
 \item  $I^p\in \br \oo\times ({\omega}+1);<{\omega};$  and $n^p\in {\omega}$,
\item  $x\in U^p(x, j) \subs I^p\cap  Q_x$ for all $x \in  I^p$ and  $j < n^p$.
\end{enumerate}
If $ p, q \in  \mbb Q$ let $q \le p$ iff
\begin{enumerate}[({Q}-i)]
 \item $I^p \subseteq I^q$, $n^p \le n^q$ and $U^p(x, j) \subseteq U^q(x, j)$ for all 
$x \in  I^p$ and $j < n^p,$
\item $U^p(x, j) \subseteq U^p(y, k)$ implies $U^q(x, j) \subseteq U^q(y, k)$ for all 
$x, y \in  I^p$ and $j, k < n^p,$
\item $U^p(x, j) \cap U^p(y, k) = \empt$ implies $U^q(x, j) \cap U^q(y, k) = \empt$ 
for all $x, y \in  I^p$ and
$j, k < n^p.$
\end{enumerate}

If $G$ is a generic filter in $\mbb Q$ then let
\begin{equation}
U^G(x, j) = \{U^p(x, j) : p \in G, x \in I^p, j < n^p\} 
\end{equation}
for any $x \in  X$ and $j < \omega$. Let 
$B^G(x) = \{U^G(x, j) : j < {\omega}\}$ for $x \in X$.

\begin{lemma}
$\bigcup \{B^G(x) : x \in  X\}$ forms a base for a Hausdorff, 
0-dimensional topology $\tau^G$ on $X$, such that $B^G(x)$ is a countable neighborhood base
for the point $x \in  X$. $X$ is left separated in type $\oo$.
\end{lemma}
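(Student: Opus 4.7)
The plan is to verify each clause of the lemma through density arguments that exploit the persistence conditions (Q-ii) and (Q-iii). First, two basic extension facts: for every $x \in X$ and every $p \in \mathbb{Q}$, the set $\{q \le p : x \in I^q\}$ is dense (add $x$ to $I^p$ and set $U^q(x, j) = \{x\}$ for $j < n^p$), and for every $N < \omega$ the set $\{q \le p : n^q \ge N\}$ is dense (extend $n^p$ and assign each new entry $U^q(z, j)$ to the singleton $\{z\}$; the clauses (Q-ii), (Q-iii) constrain only indices $< n^p$, so fresh indices are completely unconstrained).

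The key technical step is the following dichotomy, which I claim holds in $V[G]$: for every $x, y \in X$ and $k < \omega$, either some $j < \omega$ gives $U^G(x, j) \subseteq U^G(y, k)$, or some $j < \omega$ gives $U^G(x, j) \cap U^G(y, k) = \empt$. Given $p \in G$ with $x, y \in I^p$ and $k < n^p$, form $q \le p$ with $n^q = n^p + 1$ and $U^q(x, n^p) = \{x\}$. If $x \in U^p(y, k)$, then $\{x\} \subseteq U^p(y, k) \subseteq U^q(y, k)$ and (Q-ii) propagates the inclusion $U^r(x, n^p) \subseteq U^r(y, k)$ to every $r \le q$, giving $U^G(x, n^p) \subseteq U^G(y, k)$. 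If $x \notin U^p(y, k)$, then $\{x\} \cap U^p(y, k) = \empt$ and (Q-iii) propagates the disjointness, giving $U^G(x, n^p) \cap U^G(y, k) = \empt$. The same technique with $x = y$ shows that $B^G(x)$ is closed under finite intersections: for $\ell_1, \ell_2 < n^p$, by (Q-b) one has $\{x\} \subseteq U^p(x, \ell_i)$, and (Q-ii) propagates $U^r(x, n^p) \subseteq U^r(x, \ell_i)$.

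From the dichotomy and the intersection property everything follows. For the base property of $\bigcup_x B^G(x)$: if $z \in U^G(x, j) \cap U^G(y, k)$, the dichotomy applied to $(z, x, j)$ and $(z, y, k)$ must land in the first alternative (since $z \in U^G(z, \ell)$ by (Q-b) rules out the disjointness option), producing $U^G(z, \ell_1) \subseteq U^G(x, j)$ and $U^G(z, \ell_2) \subseteq U^G(y, k)$, and the intersection property finishes the job. Hausdorffness comes from forcing disjoint singleton neighborhoods for any pair $x \neq y$ (engaging the second alternative via (Q-iii)); 0-dimensionality comes from applying the second alternative to any $z \notin U^G(y, k)$, exhibiting an open neighborhood of $z$ disjoint from $U^G(y, k)$. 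For left-separation in type $\oo$, well-order $X = \oo \times (\omega+1)$ by declaring $(\alpha, k) <_l (\alpha', k')$ iff $\alpha < \alpha'$, or $\alpha = \alpha'$ and $k$ precedes $k'$ in the listing $\omega, 0, 1, 2, \dots$ of $\omega+1$ (so $(\alpha, \omega)$ opens each column). For $x = (\alpha, k)$, any predecessor with first coordinate $< \alpha$ lies outside $Q_x \subseteq [\alpha, \oo) \times (\omega+1)$ and hence outside every $U^G(x, j)$; a case-check of the three definitions of $Q_x$ shows that same-column predecessors $(\alpha, m)$ with $m < k$ finite are also outside $Q_x$. The only potentially problematic predecessor is $(\alpha, \omega)$ in the case $k < \omega$; applying the dichotomy to the distinct points $x$ and $(\alpha, \omega)$ and selecting the second alternative produces $j$ with $(\alpha, \omega) \notin U^G(x, j)$, as required. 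The main obstacle is really a bookkeeping one, namely tracking which persistence clause drives each relation, but the poset $\mathbb{Q}$ is engineered precisely so that every required fact reduces to the dichotomy above.
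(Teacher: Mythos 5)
Your proof is correct and is exactly the ``standard density argument'' that the paper itself declines to write out (its proof of this lemma is a one-line reference to Juh\'asz's Handbook chapter): the two extension facts, the inclusion/disjointness dichotomy driven by fresh singleton entries together with the persistence clauses (Q-ii) and (Q-iii), and the choice of left-separating order that places $(\alpha,\omega)$ first in each column so that only that single point of $Q_{(\alpha,k)}$ needs to be excluded by a Hausdorff-type density set. No gaps; this is a faithful filling-in of the omitted details.
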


\begin{proof}
 The statement follows from standard density arguments; for details we refer to \cite{juhi}.
\end{proof}

Let $E_n=\oo\times \{n\}$ for $n\in {\omega}$. The next lemma follows from easy density arguments as well:

\begin{lemma}
\begin{enumerate}[(a)]
 \item  The subspace $E_n$ is discrete in $X$ for $n\in I_0$; hence $T$ is $\sigma$-discrete. 
\item The subspace $E_n$ is nowhere dense  in $Y$ for $n\in I_1$; hence $Y$ is $\sigma$-nowhere dense.
\item $\oo\times \{{\omega}\}$ is dense in $X$.
\item If $N\subs {\omega}$ is infinite then $\oo\times N$ is dense in $X$.
\end{enumerate}
 
\end{lemma}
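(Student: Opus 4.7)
The plan is to reduce every clause to the basic observation that $U^G(x,j)\subs Q_x$ for all $x\in X$ and $j<{\omega}$, which is immediate from (Q-b) and the definition $U^G(x,j)=\bigcup\{U^p(x,j):p\in G\}$. Clause (a) then needs no density argument at all: if $n\in I_0$ and $x=({\alpha},n)\in E_n$, inspection of $Q_x=[{\alpha},\oo)\times[n+1,{\omega}]\cup\{x\}$ shows $Q_x\cap E_n=\{x\}$, hence $U^G(x,0)\cap E_n=\{x\}$, so $x$ is isolated in $E_n$. Thus every such $E_n$ is discrete in $X$ and $T=\bigcup_{n\in I_0}E_n$ is $\sigma$-discrete.

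For (b), fix $n\in I_1$ and a condition $p$ with $y=({\beta},k)\in I^p\cap Y$ and $j<n^p$. I would prove that the set of $q\le p$ which adjoin a new point $z=({\gamma},m)$ with ${\gamma}\ge{\beta}$ fresh, $m\in I_1$, $m>\max(n,k)$, $z\in U^q(y,j)$, and $U^q(z,j')=\{z\}$ for $j'<n^q$, is dense below $p$. The construction sets $I^q=I^p\cup\{z\}$, $n^q=n^p$, and adds $z$ to $U^p(x,j')$ precisely when $U^p(y,j)\subs U^p(x,j')$. A short case analysis on $k'\in I_0\cup I_1\cup\{{\omega}\}$ shows $z\in Q_x$ whenever $y\in Q_x$ (since ${\gamma}\ge{\beta}$ and $m>k$), so (Q-b) is preserved; (Q-ii) and (Q-iii) follow from the choice of which neighborhoods receive $z$, as discussed below. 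Genericity and (Q-ii) then give $U^G(z,0)\subs U^G(y,j)$, while $U^G(z,0)\subs Q_z\subs [{\gamma},\oo)\times[m,{\omega}]$ misses $E_n$ since $m>n$. Hence every non-empty basic open subset of $Y$ has a non-empty open sub-subset missing $E_n$, so $E_n$ is nowhere dense in $Y$; and $Y=\bigcup_{n\in I_1}E_n$ shows that $Y$ is $\sigma$-nowhere dense.

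Clauses (c) and (d) follow by the same template with a different target coordinate $m$: $m={\omega}$ in (c), or for (d) any $m\in N$ chosen to ensure $z=({\gamma},m)\in Q_x$, namely $m>k$ if $k\in I_0$, $m\ge k$ if $k\in I_1$, and any $m$ if $k={\omega}$, which is possible since $N$ is infinite. The extension adjoins $z=({\gamma},m)$ with ${\gamma}\ge{\alpha}$ fresh, places $z\in U^q(x,j)$ by the same rule as in (b), and assigns $U^q(z,j')=\{z\}$; genericity then delivers $z\in U^G(x,j)\cap(\oo\times\{m\})\subs U^G(x,j)\cap(\oo\times N)$.

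The main obstacle is the bookkeeping that preserves (Q-ii) and (Q-iii) when inserting the new point $z$ into existing neighborhoods. The rule ``add $z$ to $U^p(x,j')$ iff $U^p(y,j)\subs U^p(x,j')$'' handles both: any prior inclusion $U^p(x_1,j_1)\subs U^p(x_2,j_2)$ with the smaller side receiving $z$ forces the larger to receive $z$ too (since $(x_1,j_1)\supseteq (y,j)$ above $U^p(y,j)$ propagates upward), and any two neighborhoods both receiving $z$ must both contain $y$, contradicting any prior disjointness between them. The assignments $U^q(z,j')=\{z\}$ vacuously respect every remaining requirement, so no further care is needed for the newly introduced neighborhoods.
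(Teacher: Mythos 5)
The paper gives no proof of this lemma at all (it is dismissed as ``easy density arguments''), and your write-up supplies exactly the kind of argument intended: clause (a) by direct inspection of $Q_x$ (correctly noting that no genericity is needed there), and clauses (b)--(d) by adjoining a single fresh point $z$ to a condition via the rule ``add $z$ to $U^p(x',j')$ iff $U^p(y,j)\subseteq U^p(x',j')$'' and then verifying (Q-b), (Q-ii) and (Q-iii). Your treatment of (Q-ii) and (Q-iii), and all of clauses (a), (b) and (c), are correct.

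There is one step that fails as written, in clause (d) when the basic open set is centred at a point $x=(\alpha,\omega)\in Z$. You claim ``any $m$'' works there, on the grounds that $z=(\gamma,m)\in Q_x=[\alpha,\omega_1)\times[0,\omega]$ for every $m$. But (Q-b) must also hold for every $x'=(\alpha',k')\in I^p$ whose neighbourhood absorbs $z$, i.e.\ every $x'$ with $U^p(x,j)\subseteq U^p(x',j')$. Such an $x'$ only satisfies $x\in Q_{x'}$, and for $x$ in the top row this imposes no bound on $k'$: the inequality $\omega\ge k'+1$ is vacuous. Concretely, take $I^p=\{x',x\}$ with $x'=(\alpha',2)$, $\alpha\ge\alpha'$, $U^p(x',0)=\{x',x\}$ and $U^p(x,0)=\{x\}$; then your rule forces $z$ into $U^q(x',0)$, but if $m=0\in N$ then $z=(\gamma,0)\notin Q_{x'}$, so $q\notin\mbb Q$. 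The repair is immediate: since $I^p$ is finite and $N$ is infinite, choose $m\in N$ exceeding every finite second coordinate occurring in $I^p$; then $m\ge k'+1$ for all relevant $x'$ and the rest of your argument goes through. (Clauses (b) and (c) are unaffected: in (b) the bound $m>k$ already dominates every relevant $k'$ because $y\in Q_{x'}$ forces $k\ge k'$ for finite $k'$, and in (c) the second coordinate $\omega$ lies in every $Q_{x'}$.)
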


Denote $\pi$ the projection from $\oo\times ({\omega}+1)$ onto $\oo$, i.e.
$\pi(\<{\alpha},n\>)={\alpha}$.

\begin{definition}
 We say that the conditions $p$ and $q$
are {\em twins} iff  $n^p=n^q$, $|\pi[I^p]|=|\pi[I^q]|$ and
denoting by ${\sigma}$ the unique $<$-preserving bijection between $\pi[I^p]$ and
$\pi[I^q]$ we have
\begin{enumerate}
 \item $\pi[I^p]\cap\pi[I^q]<\pi[I^p]\setm \pi[I^q]<\pi[I^q]\setm\pi[I^p]$,
\item 
using the notation $\ssigma(\<{\alpha},n\>)=\<\sigma({\alpha}),n\>$,
\begin{enumerate}[(i)]
\item $I^q=\ssigma[I^p]$,
\item $U^q(\ssigma(x),i)=\ssigma[U^p(x,i)]$ for $x\in I^p$ and $i<n^p$.
\end{enumerate}
\end{enumerate}
We say that $\ssigma$ is the {\em twin function} from $p$ to $q$.
\end{definition}

The following lemma is rather technical although it will be essential in finishing our proof. We encourage the reader to skip the proof of Lemma \ref{lm:main_uniform} at first read and see its applications in what follows. 

\begin{lemma}\label{lm:main_uniform}
Assume that $p$ and $q$ are twin conditions, $n^p=n^q=n$, and
$\ssigma$ is the twin function from $p$ to $q$.

\noindent (i) If $u\in I^p\cap (\oo\times I_1)$, then there is 
a condition $r\le p,q$ such that 
\begin{equation}
  \ssigma(u)\in \bigcap_{\ell<n^p}U^r(u,\ell).
\end{equation}
(ii) If
\begin{enumerate}[(a)] 
\item $r\le p$, $I^r=I^p,$ $n^r=n+1$, $U^r(x,i)=U^p(x,i)$ for $i<n$ and  $U^r(x,n)=\{x\}$ for each $x\in I^r$.
\item $s\le r$ such that $\pi[I^s] < \pi[I^q\setm I^p]$,
\item  $u\in (I^p\setm I^q)\cap (\oo\times I_2)$, and   $v\in U^s(u,n)$, 
\end{enumerate}
then there is a condition $t\le s,q$ such that 
\begin{equation}
 \ssigma(u)\in \bigcap_{i<n^s} U^t(v,i).
\end{equation}
 
\end{lemma}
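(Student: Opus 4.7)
My plan for both parts is to build the common extension by uniting the index sets, keeping the neighborhood depth of the deeper of the two conditions, copying the two given neighborhood systems, and then adjoining $\ssigma(u)$ to exactly the $(y,\ell)$ pairs for which a carefully chosen upward-closed trigger is satisfied. The geometric observation powering everything is this: for any $x$ with $\pi_2(x)\in I_1\cup I_2$, the point $\ssigma(x)=(\sigma(\pi(x)),\pi_2(x))$ lies in $Q_x$, because in those two cases $Q_x$ is closed under arbitrary increase of the first coordinate, and $\sigma(\pi(x))\ge \pi(x)$ by the twin condition. The hypotheses $u\in\oo\times I_1$ in (i) and $u\in\oo\times I_2$ in (ii) are exactly what put $\ssigma(u)$ into the good region.

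For part (i), take $I^r=I^p\cup I^q$ and $n^r=n$, and define $U^r(y,\ell)$ as follows. For $y\in I^p\cap I^q$ (equivalently $\pi(y)\in \pi[I^p]\cap\pi[I^q]$, where $\ssigma$ acts trivially) set $U^r(y,\ell)=U^p(y,\ell)\cup U^q(y,\ell)$; for $y\in I^p\setm I^q$ set $U^r(y,\ell)=U^p(y,\ell)$, and adjoin $\ssigma(u)$ whenever $u\in U^p(y,\ell)$; for $y\in I^q\setm I^p$ set $U^r(y,\ell)=U^q(y,\ell)$. Because $u\in U^p(u,\ell)$ for all $\ell<n$, this immediately gives $\ssigma(u)\in U^r(u,\ell)$ as required. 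The axioms (Q-ii) and (Q-iii) on the $p$-side are immediate because the trigger ``$u\in U^p(y,\ell)$'' is upward-closed under $U^p$-containment and incompatible with $U^p$-disjointness; on the $q$-side each $U^q$-containment or disjointness is an $\ssigma$-image of the corresponding $U^p$-statement, so the $q$-checks pull back to the $p$-checks. The one genuine cross-case ($y_1\in I^p\cap I^q$, $y_2\in I^q\setm I^p$, $U^q(y_1,\ell_1)\subs U^q(y_2,\ell_2)$) turns out to be vacuous: such a containment would force $\pi(y_1)\ge\pi(y_2)$, violating $\pi[I^p]\cap\pi[I^q]<\pi[I^q]\setm\pi[I^p]$.

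For part (ii), since $\pi_2(u)=\omega$ we have $\ssigma(u)=(\sigma(\pi(u)),\omega)$; by hypothesis (b) the first coordinate $\sigma(\pi(u))$ strictly exceeds $\pi[I^s]$, so $\ssigma(u)\in Q_b$ for every $b\in I^s$, and additionally $I^s\cap (I^q\setm I^p)=\empt$. Put $I^t=I^s\cup I^q$, $n^t=n^s$, and define $U^t(b,j)$ by the following recipe: for $b\in I^s$ and $j<n^s$ start with $U^s(b,j)$, union in $U^q(b,j)$ when also $b\in I^p\cap I^q$ and $j<n$, and adjoin $\ssigma(u)$ iff $v\in U^s(b,j)$; for $b\in I^q\setm I^s$ take $U^q(b,j)$ when $j<n$ and $\{b\}$ when $n\le j<n^s$. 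Since $v\in U^s(v,i)$ trivially, the required $\ssigma(u)\in U^t(v,i)$ holds for all $i<n^s$. The axioms for $t\le s$ are immediate from the upward-closed/disjointness-respecting nature of the trigger ``$v\in U^s(b,j)$''; those for $t\le q$ on pairs inside $I^q\setm I^s$ are trivial, and on pairs inside $I^p\cap I^q$ they reduce to $U^p$-statements via $\ssigma$ and then to $U^s$-statements via $s\le p$.

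The delicate check is (Q-iii) for $t\le q$ in the mixed case where $\ssigma(u)$ already lies in some $U^q(y_2,\ell_2)$ --- because $u\in U^p(y_2',\ell_2)$ for the $\ssigma$-preimage $y_2'\in I^p\setm I^q$ of $y_2$ --- while simultaneously being adjoined to $U^t(y_1,\ell_1)$ via the $v$-trigger, under the hypothesis $U^q(y_1,\ell_1)\cap U^q(y_2,\ell_2)=\empt$. This is where hypothesis (a) is essential: from $u\in U^p(y_2',\ell_2)$ together with $U^r(u,n)=\{u\}$ we get $U^r(u,n)\subs U^r(y_2',\ell_2)$, so by (Q-ii) for $s\le r$, $U^s(u,n)\subs U^s(y_2',\ell_2)$, and therefore $v\in U^s(y_2',\ell_2)$. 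Combined with $v\in U^s(y_1,\ell_1)$ and the $U^s$-disjointness inherited from the $\ssigma$-pullback of the $U^q$-hypothesis (which gives $U^p(y_1,\ell_1)\cap U^p(y_2',\ell_2)=\empt$, hence $U^s(y_1,\ell_1)\cap U^s(y_2',\ell_2)=\empt$ by $s\le p$), this yields a contradiction. Thus the seemingly ad hoc layer $U^r(\cdot,n)=\{\cdot\}$ in hypothesis (a) of (ii) earns its keep precisely at this point.
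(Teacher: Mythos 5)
Part (i) of your proposal is essentially the paper's construction and argument, including the two points the paper glosses over (why $x=u$ is the only threat to (Q-b), which is where $u\in\oo\times I_1$ is used, and why the mixed containment case is vacuous by $\pi[I^p]\cap\pi[I^q]<\pi[I^q]\setm\pi[I^p]$). That part is fine.

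Part (ii) has a genuine gap in the definition of $U^t$. For $b\in I^s\cap I^q$ you only union in $U^q(b,j)$ itself, but this breaks (Q-ii) for $t\le s$. The point is that $s$ may contain containments $U^s(z,\ell)\subseteq U^s(b,j)$ (with $z,b\in I^p\cap I^q$) that were \emph{not} present in $p$: e.g.\ $U^p(z,\ell)=\{z,w\}$ and $U^p(b,j)=\{z,b\}$ overlap without being nested or disjoint, and $s$ later enlarges $U^s(b,j)$ to $\{z,b,w\}$. If $w\in I^p\setm I^q$ with $w\ne u$, then your $U^t(z,\ell)$ contains $\ssigma(w)\in I^q\setm I^p$ (it sits inside $U^q(z,\ell)=\ssigma[U^p(z,\ell)]$), while your $U^t(b,j)=U^s(b,j)\cup U^q(b,j)\cup\{\ssigma(u)\}$ does not contain $\ssigma(w)$, since $w\notin U^p(b,j)$. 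So $U^s(z,\ell)\subseteq U^s(b,j)$ but $U^t(z,\ell)\not\subseteq U^t(b,j)$, violating (Q-ii). Your heuristic that "the trigger is upward-closed" only covers the adjunction of the single point $\ssigma(u)$; the unconditional adjunction of the whole block $U^q(b,j)$ is not upward-closed along $U^s$-containments, and unlike part (i) you cannot transfer such containments through $\ssigma$, because they live at the level of $s$, not of the twins $p,q$. This is exactly why the paper defines
\begin{equation}\notag
V(x,j)=\bigcup\{U^q(z,\ell):z\in I^p\cap I^q,\ \ell<n^p,\ U^s(z,\ell)\subseteq U^s(x,j)\},
\end{equation}
i.e.\ closes the adjoined $q$-blocks under the $U^s$-containment relation; after that fix one must re-verify (Q-b) and the disjointness clauses for these enlarged sets (via preimages $\ssigma^{-1}(w)$), which is a substantive portion of the paper's proof that your write-up does not supply. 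Your other deviations are harmless: the simpler trigger "$v\in U^s(b,j)$" in place of "$U^s(v,i)\subseteq U^s(b,j)$ for some $i$" still respects containment and disjointness, and your treatment of the delicate mixed case of (Q-iii) for $t\le q$ via $U^r(u,n)=\{u\}$ is correct and matches the paper's.
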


\begin{proof}
$(i)$
Define $r=\<I^r, n^r, \<U^r(x,j):x\in I^r, j<n^r\>\>$ as follows:
\begin{itemize}
 \item $I^r=I^p\cup I^q$, $n^r=n^p=n^q$.
\item if $x=({\alpha},n)\in I^r$, let
\begin{equation}\notag
U^r(x,j)=\left \{
\begin{array}{ll}
U^p(x,j)\cup U^q(x,j)&\text{if $x\in I^p\cap I^q,$}\\ 
U^p(x,j)\cup \{\ssigma(u)\}&\text{if $x\in I^p\setm I^q$  and $u\in U^p(x,j)$, }\\ 
U^p(x,j)&
\text{if $x\in I^p\setm I^q$ and  $u\notin U^p(x,j)$, }\\ 
U^q(x,j)&\text{if $x\in I^q\setm I^p$.}\\ 
\end{array}
\right . 
\end{equation}
\end{itemize}
It is clear that $r$ satisfies (Q-a). If   (Q-b) fails then
$U^r(x,j)\not\subs Q_x$ for some $x\in I^r$.
Since $p$ and $q$ are twins, the only possibility is that 
$U^r(x,j)=U^p(x,j)\cup \{\ssigma(u)\}$ and $\ssigma(u)\notin Q_x$.
But $u\in Q_x$, so the only possibility is that $x=u$.
But $u\in \oo\times I_1$, so $\ssigma(u)\in Q_u$.
So $r$ satisfies (Q-b) as well. Hence 
 $r\in \mbb Q$.
 
To show $q\le p,q$, first remark that (Q-i) is trivial by the construction.

(Q-ii) can be easily seen to hold since $U^p(x,j)\subs U^p(y,k)$ iff $U^q(\ssigma(x),j)\subs U^q(\ssigma(y),k)$
as $p$ and $q$ are twins.

To check (Q-iii) assume first that $x,y\in I^p$  and $U^p(x, j) \cap U^p(y, k)=\empt $.
We can assume that $u\notin U^p(x,j)$ and so $\ssigma(u)\notin U^r(x,j)$.
Thus
\begin{multline}
U^r(x,j)\cap U^r(y,k)\subs\\  \bigl(U^p(x,j)\cup U^q(\ssigma(x), j)\bigr)\cap
\bigl(U^p(y,k)\cup U^q(\ssigma(y), k)\bigr).
\end{multline}
But $p$ and $q$ are twins, and so $U^p(x, j) \cap U^p(y, k)=\empt $ implies 
\begin{equation}
\bigl(U^p(x,j)\cup U^q(\ssigma(x), j)\bigr)\cap
\bigl(U^p(y,k)\cup U^q(\ssigma(y), k)\bigr)=\empt
\end{equation}
as well. 

Thus $r$ and $p$ satisfy (Q-iii), and so $r\le p$.

Now let  $x,y\in U^q$  such that  $U^q(x, j) \cap U^q(y, k)=\empt $.

Pick $x',y'\in I^p$ with $\ssigma(x')=x$ and
$\ssigma(y')=y$.

Assume that $\ssigma(u)\notin U^q(x,j)$.
Then $\ssigma(u)\notin U^r(x,j)$.
Thus
\begin{multline}
U^r(x,j)\cap U^r(y,k)\subs\\  \bigl(U^q(x,j)\cup U^p(x', j)\bigr)\cap
\bigl(U^q(y,k)\cup U^p(y', k)\bigr).
\end{multline}
But $p$ and $q$ are twins, and so $U^q(x, j) \cap U^q(y, k)=\empt $ implies 
\begin{equation}
U^r(x,j)\cap U^r(y,j)\subs\\  \bigl(U^q(x,j)\cup U^p(x', j)\bigr)\cap
\bigl(U^q(y,k)\cup U^p(y', k)\bigr)
=\empt
\end{equation}
as well. 

Thus $r$ and $q$ satisfy (Q-iii), and so $r\le q$.

So we proved $r\le p,q$.

Finally  
$\ssigma(u)\in U^r(u,\ell)$ for $\ell <n^p$  is clear from the construction.
This proves \ref{lm:main_uniform}$(i)$.

\bigskip

\noindent$(ii)$\\
Define $t=\<I^t, n^t, \<U^t(x,j):x\in I^t, j<n^t\>\>$ as follows.
For $x
\in I^s$ and $j<n^s$
let 
\begin{equation}\label{eq:v}
V(x,j)=\bigcup\{U^q(z,\ell):z\in I^p\cap I^q, \ell<n^p, U^s(z,\ell)\subs U^s(x,j)\}, 
\end{equation}
and 
\begin{equation}\label{eq:w}
W(x,j)=\left \{
\begin{array}{ll}
\{\ssigma(u)\}&\text{if $U^s(v,i)\subs U^s(x,j)$ for some $i<n^s$,}\\
\empt&\text{otherwise.}
\end{array}
\right . 
\end{equation}
Let
\begin{enumerate}[(a)]
 \item $I^t=I^s\cup I^q$, $n^t=n^s$.
\item For $x\in I^t$ and $j<n^t$ let
\begin{equation}\label{eq:ut}
U^t(x,j)=\left \{
\begin{array}{ll}
U^s(x,j)\cup V(x,j)\cup W(x,j)&\text{if $x\in I^s,$}\\ 
U^q(x,j)&\text{if $x\in I^q\setm I^s$ and $j<n^q$,}\\ 
\{x\}&\text{if $x\in I^q\setm I^s$ and $n^q\le j< n^t$.}
\end{array}
\right . 
\end{equation}
\end{enumerate}
Clearly $t$ satisfies (Q-a). 

Assume on the contrary  that $w\in U^t(x,j)\setm Q_x$ witnesses that 
 (Q-b) fails.
Since $\ssigma(u)\in E_{\omega}$, we have $w\ne \ssigma(u)$.

So $w\in V(x,j)\setm U^s(x,j)\subs V(x,j)\setm I^p$.
Pick $w'\in I^p\setm I^q$ with $\ssigma(w')=w$.
There is $z\in I^p\cap I^q$ and $\ell<n^p$
such that $w\in U^q(z,\ell)$ and $U^s(z,\ell)\subs U^s(x,j)$.
Thus $x\notin I^p\setm I^q$, and so $x\ne w'$.
Thus  $w'\in U^p(z,\ell)\subseteq U^s(z,\ell)\subseteq U^s(x,j)\subs Q_x$ implies $w=\ssigma(w')\in Q_x$.
This contradiction shows that (Q-b) must hold. Thus, we proved that $t\in \mbb Q$.

Check $t\le s,q$. 
(Q-i) is trivial. 

(Q-ii) holds for $t\le s$, because the construction     is "monotone" in 
(\ref{eq:v})--(\ref{eq:ut}).
(Q-ii) also holds for $t\le q$ because if $U^q(x,j)\subs U^q(y,k)$ then it 
is not possible
that $x\in I^p\cap I^q$ and $y\in I^q\setm I^p$, so we can use that the 
construction     is "monotone". 

Now check (Q-iii) for $t\le s$. 
So let $U^s(x,j)\cap U^s(y,k)=\empt$, and assume on the contrary that 
$U^t(x,j)\cap U^t(y,k)\ne\empt$.
Since $p$ and $q$ are twins, we have 
\begin{equation}\label{eq:disjoint}
(U^s(x,j)\cup V(x,j))\cap
(U^s(y,k)\cup V(y,k))=\empt. 
\end{equation}

Indeed, assume that $w\in (U^s(x,j)\cup V(x,j))\cap
(U^s(y,k)\cup V(y,k))$. Since $V(x,j)\cap I^s\subs U^s(x,j)$, we can assume 
$w\in I^q\setm I^p,$ i.e. $w\in V(x,j)\cap V(y,k)$.
Then ${\ssigma}^{-1}(w)\in U^s(x,j)\cap U^s(y,k)$. Contradiction.

\medskip

So, by (\ref{eq:disjoint}), $U^t(x,j)\cap U^t(y,k)\ne \empt$ implies 
$\ssigma(u)\in U^t(x,j)\cap U^t(y,k)$.
Since $u,v\notin U^s(x,j)\cap U^s(y,k)$, we can assume that 
\begin{equation}
 \ssigma(u) \in W(x,j)\quad \text{ and } \quad \ssigma(u)\in V(y,k),  
\end{equation}
so
\begin{equation}
 U^s(v,i)\subs U^s(x,j) \text{ and } 
u\in U^s(z,\ell)\subs U^s(y,k),  
\end{equation}
for some $i<n^s$, $ z\in I^p\cap I^q$ and $\ell<n^p$.

So $U^s(z,\ell)\cap U^s(u,n)\ne\empt$. Thus $s\le r$ implies
$U^r(z,\ell)\cap U^r(u,n)\ne\empt$, that is $u\in U^r(z,\ell)$. But $U^r(u,n)=\{u\}$,
so $U^r(u,n)\subs U^r(z,\ell)$. Thus $U^s(u,n)\subs U^s(z,\ell)$,
and so $v\in U^s(u,n)\subs U^s(z,\ell)\subs U^s(y,k)$.
Thus $v\in U^s(x,j)\cap U^s(y,k)$. Contradiction, thus
$U^t(x,j)\cap U^t(y,k)=\empt$.

Finally check (Q-iii) for $t\le q$. 
So let $U^q(x,j)\cap U^q(y,k)=\empt$.

We should distinguish three cases as follows.

If $x,y\in I^p\cap I^q$, then $x,y\in I^s$. As $U^p(x,j)\cap U^p(y,k)=\empt$ and $s\leq p$, we have that $U^s(x,j)\cap U^s(y,k)=\empt$. We have just verified that  (Q-iii) holds in this case.

If $x, y\in I^q\setm I^p$, then $U^t(x,j)=U^q(x,j)$ and $U^t(y,k)=U^q(y,k)$, so (Q-iii) is trivial.

Finally let $x\in I^p\cap I^q$ and $y\in I^q\setm I^p$.
Then 
\begin{equation}
U^t(x,j)\cap U^t(y,k)=\bigl (U^s(x,j)\cup V(x,j)\cup W(x,j)\bigr )\cap U^q(y,k)
\end{equation}
Let $y'={\ssigma}^{-1}(y)\in I^p\setm I^q$.
Then $U^p(x,j)\cap U^p(y',k)=\empt$, and so 
$U^s(x,j)\cap U^s(y',k)=\empt$.
Since $U^q(y,k)\cap I^s\subs  U^p(y',k)\subs I^s(y',k)$, we have 
\begin{equation}
U^s(x,j)\cap U^q(y,k)=\empt.
\end{equation}

If $U^s(z,\ell)\subs U^s(x,j)$, then
$U^s(x,j)\cap U^s(y',k)=\empt$ implies that  
$U^s(z,\ell)\cap U^s(y',k)=\empt$, and so
$U^p(z,\ell)\cap U^p(y',k)=\empt$. Thus  
$U^q(z,\ell)\cap U^q(y,k)=\empt$. Hence 
\begin{equation}
V(x,j)\cap U^q(y,k)=\empt. 
\end{equation}

Assume that $W(x,j)=\{\ssigma(u)\}$.
Then $
 U^s(v,i)\subs U^s(x,j)$ for some $i<n^s$.
Thus $U^s(u,n)\cap U^s(x,j)\ne \empt$, so 
$U^r(u,n)\cap U^r(x,j) \ne\empt$, so $u\in U^p(x,j)$.
Thus $u\notin U^p(y'k)$ and so 
$\ssigma(u)\notin U^q(y,k)$. 

So $U^t(x,j)\cap U^t(y,k)=\empt.$

Thus $t\leq s,q$ and $\ssigma(u)\in \bigcap_{i<n^s} U^t(v,i)$ holds as well. 

\end{proof}

\begin{lemma}
$\mbb Q$ has property K.  
\end{lemma}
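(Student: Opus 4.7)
The plan is a standard $\Delta$-system argument tailored to $\mbb Q$: starting from an arbitrary uncountable subfamily, refine to an uncountable subfamily of pairwise \emph{twins} (in the sense preceding Lemma \ref{lm:main_uniform}), and then show that twins are always compatible.

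Given $\{p_\alpha:\alpha<\oo\}\subs \mbb Q$, I would first apply the $\Delta$-system lemma to the first-coordinate supports $\pi[I^{p_\alpha}]\subs \oo$, obtaining a root $R$ and an uncountable $\Gamma_0\subs \oo$. A chain of routine pigeonhole refinements then yields an uncountable $\Gamma\subs \Gamma_0$ such that for each ${\alpha}\in \Gamma$: $n^{p_\alpha}=n$ and $|\pi[I^{p_\alpha}]|=m$ are fixed; listing $\pi[I^{p_\alpha}]=\{{\alpha}^1<\dots<{\alpha}^m\}$, both the fibres $S^i=\{k\in {\omega}+1:({\alpha}^i,k)\in I^{p_\alpha}\}$ and the position $A=\{i:{\alpha}^i\in R\}$ of the root are ${\alpha}$-independent; and for every tuple $(i,k,i',k',j)$ the truth value of ``$({\alpha}^{i'},k')\in U^{p_\alpha}(({\alpha}^i,k),j)$'' is ${\alpha}$-independent. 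A final thinning secures $\max R<\min(\pi[I^{p_\alpha}]\setm R)$ for each ${\alpha}\in \Gamma$ and $\pi[I^{p_\alpha}]\setm R<\pi[I^{p_\beta}]\setm R$ whenever ${\alpha}<{\beta}$ in $\Gamma$. For such ${\alpha}<{\beta}$, the order-preserving bijection ${\sigma}_{{\alpha}{\beta}}:\pi[I^{p_\alpha}]\to \pi[I^{p_\beta}]$ (which is the identity on $R$) lifts to the twin function $\ssigma_{{\alpha}{\beta}}:I^{p_\alpha}\to I^{p_\beta}$ witnessing that $p_{\alpha}$ and $p_{\beta}$ are twins.

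It remains to show that a twin pair $p,q$ admits a common extension. I would reuse the construction from the proof of Lemma \ref{lm:main_uniform}$(i)$ with the auxiliary point $u$ removed: set $I^r=I^p\cup I^q$, $n^r=n^p=n^q$, and
\begin{equation}\notag
U^r(x,j)=\begin{cases}U^p(x,j)\cup U^q(x,j)&\text{if }x\in I^p\cap I^q,\\ U^p(x,j)&\text{if }x\in I^p\setm I^q,\\ U^q(x,j)&\text{if }x\in I^q\setm I^p.\end{cases}
\end{equation}
Requirements (Q-a), (Q-b) and (Q-i) are immediate (using in (Q-b) that $\ssigma$ shifts first coordinates upward, so $Q_{\ssigma(x)}\subs Q_x$ for $x\in I^p\setm I^q$), and the checks for (Q-ii) and (Q-iii) follow the same case analysis carried out inside the proof of Lemma \ref{lm:main_uniform}$(i)$.

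The subtle point---and the one I expect to be the main obstacle---is the geometric observation that makes (Q-ii) work for $r\le p$: a containment $U^p(x,j)\subs U^p(y,k)$ with $x\in I^p\cap I^q$ and $y\in I^p\setm I^q$ is impossible, since $x\in U^p(x,j)\subs Q_y$ would force $\pi(x)\ge \pi(y)$, contradicting the ordering $\max R<\min(\pi[I^p]\setm R)$ arranged in the $\Delta$-system step. Without this, the $U^q(x,j)$-summand added at roots would spill outside $U^p(y,k)$ and violate (Q-ii); once it is in place, the remainder of the verification is routine bookkeeping.
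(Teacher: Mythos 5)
Your proposal is correct and takes essentially the same route as the paper: a standard $\Delta$-system refinement to an uncountable family of pairwise twins, followed by the amalgamation underlying Lemma \ref{lm:main_uniform}. If anything you are slightly more careful than the paper, which simply cites Lemma \ref{lm:main_uniform} even though part $(i)$ presupposes a point $u\in I^p\cap(\oo\times I_1)$; your variant with $u$ removed, together with the observation that the root lies below the non-root part of $\pi[I^p]$ (so the problematic case of (Q-ii) cannot occur), closes that small gap.
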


\begin{proof}
If $\<p_{\alpha}:{\alpha}<\oo\>\subs \mbb Q$, then 
by standard $\Delta$-system arguments we can find an uncountable $I\subs \oo$
such that $p_{\alpha}$ and $p_{\beta}$ are twins whenever ${\alpha}<{\beta}\in I$.
So $p_{\alpha}$ and $p_{\beta}$ are compatible by  Lemma \ref{lm:main_uniform}.
\end{proof}

\begin{lemma}\label{lm:univ_en_small_discrete}
If $m\in I_1$ then $E_m$ does not contain any uncountable discrete subspace; in particular, $s(Y)=\omega$.
\end{lemma}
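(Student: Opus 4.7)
My plan is to argue by contradiction and extract a violation of isolation using Lemma \ref{lm:main_uniform}(i). Suppose some condition $p^* \in \mbb{Q}$ forces that $\dot D \subseteq E_m$ is an uncountable discrete subspace. First I would, for each $\alpha < \oo$, choose via a standard density argument a condition $p_\alpha \le p^*$, a point $x_\alpha = (\xi_\alpha, m) \in I^{p_\alpha}$, and an index $j_\alpha < n^{p_\alpha}$ such that $p_\alpha$ forces both $x_\alpha \in \dot D$ and $U^G(x_\alpha, j_\alpha) \cap \dot D = \{x_\alpha\}$; by thinning I may assume the $\xi_\alpha$ are distinct and strictly increasing.

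Next I would invoke the same $\Delta$-system uniformization that underlies the proof of property K, to pass to an uncountable $I \subseteq \oo$ on which $p_\alpha$ and $p_\beta$ are twins for all $\alpha < \beta$ in $I$, the integer $j_\alpha$ is a constant $j$, and the twin function $\ssigma$ from $p_\alpha$ to $p_\beta$ sends $x_\alpha$ to $x_\beta$. The last property requires mild extra care: by further thinning, I would arrange $\xi_\alpha$ to occupy the same relative position inside the moving part of $\pi[I^{p_\alpha}]$, which forces $\ssigma$ to identify the two designated points.

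The crux is then immediate. Fix $\alpha < \beta \in I$; since $m \in I_1$, the point $u := x_\alpha$ lies in $I^{p_\alpha} \cap (\oo \times I_1)$, so Lemma \ref{lm:main_uniform}(i) yields a common extension $r \le p_\alpha, p_\beta$ with $\ssigma(x_\alpha) = x_\beta \in U^r(x_\alpha, \ell)$ for every $\ell < n^{p_\alpha}$. In particular $x_\beta \in U^r(x_\alpha, j)$, so $r$ forces $x_\beta \in U^G(x_\alpha, j)$; but $r \le p_\beta$ also forces $x_\beta \in \dot D$, while $r \le p_\alpha$ forces $U^G(x_\alpha, j) \cap \dot D = \{x_\alpha\}$. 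Since $x_\alpha \ne x_\beta$, this is a contradiction.

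The main obstacle is precisely what Lemma \ref{lm:main_uniform}(i) absorbs: producing a refinement of $p_\alpha \cup p_\beta$ that forcibly pulls $\ssigma(x_\alpha)$ into every neighborhood $U(x_\alpha, \ell)$ of $x_\alpha$. The hypothesis $m \in I_1$ is indispensable at this point, since only then does $\ssigma(x_\alpha)$ automatically land inside $Q_{x_\alpha}$—the analogous extension for $m \in I_2$ would fail the constraint (Q-b). Finally, to derive $s(Y) = \omega$, I would observe that $Y = \bigcup_{m \in I_1} E_m$ is a countable union and that any discrete $D \subseteq Y$ restricts to a discrete—hence, by the above, countable—subset of each $E_m$, so $|D| \le \omega$.
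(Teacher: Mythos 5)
Your proposal is correct and follows essentially the same route as the paper: pick uncountably many conditions deciding points of the discrete set together with their isolating basic neighborhoods, uniformize to twins with the twin function matching the designated points, and apply Lemma \ref{lm:main_uniform}(i) to force the twin point into the isolating neighborhood, contradicting discreteness. Your extra remarks about placing $x_\alpha$ in the moving part of the $\Delta$-system and about why $m\in I_1$ is needed for condition (Q-b) are exactly the details the paper leaves implicit.
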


\begin{proof}
Assume that $p\Vdash$ "$  \dot A=\{\dot x_{\zeta}:{\zeta}<\oo\}\in \br E_m;\oo; $ is discrete".
For each ${\zeta}<\oo$ pick a condition $p_{\zeta}$ which 
decides the value of $\dot x_{\zeta}$
and 
\begin{equation}
 p_{\zeta}\Vdash U(x_{\zeta},k_{\zeta})\cap \dot A=\{x_{\zeta}\}.
\end{equation}
We can assume that  the elements $\{x_{\zeta}:{\zeta}<\oo\}$ are pairwise different, 
 $x_{\zeta}\in I^{p_{\zeta}}$ and  $k_{\zeta}=k<n^{p_{\zeta}}$.

By standard $\Delta$-system arguments we can find
${\zeta}<{\xi}<\oo$ such that $p_{\zeta}$ and $p_{\xi}$ are twins,
and $\ssigma(x_{\zeta})=x_{\xi}$, where $\ssigma$ is a the twin function.

Then, by Lemma \ref{lm:main_uniform} part $(i)$, there is a $q\le p_{\zeta},p_{\xi} $
such that $\ssigma (x_\zeta)=x_{\xi}\in \bigcap_{\ell<n^{p_\zeta}} U^q(x_{\zeta},\ell)$ and so
\begin{equation}
q\Vdash \{ x_{\zeta},x_{\xi}\}\subs U^G(x_{\zeta},k)\cap \dot A. 
\end{equation}
This contradicts the choice of the neighborhoods which finishes the proof.
\end{proof}

\begin{lemma}\label{lm:luzin_i2}
Every uncountable  subset $A$ of $Z$ is somewhere dense in $X$. In particular,  $Z$ is a Luzin subspace of $X$. 
\end{lemma}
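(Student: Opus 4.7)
Suppose for contradiction that some $p\in \mathbb{Q}$ forces that $\dot A\subseteq Z$ is uncountable and nowhere dense. For each $\zeta<\omega_1$ choose $p_\zeta\le p$ and a point $u_\zeta=(\alpha_\zeta,\omega)\in Z$ such that $p_\zeta$ decides $\dot u_\zeta=u_\zeta\in\dot A$, $u_\zeta\in I^{p_\zeta}$, and the $u_\zeta$ are pairwise distinct. Let $n_\zeta=n^{p_\zeta}$ and let $r_\zeta\le p_\zeta$ be the extension described in condition $(a)$ of Lemma \ref{lm:main_uniform}$(ii)$: same underlying data, but with $n^{r_\zeta}=n_\zeta+1$ and $U^{r_\zeta}(x,n_\zeta)=\{x\}$ for every $x\in I^{r_\zeta}=I^{p_\zeta}$. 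Since $r_\zeta\Vdash\dot A$ is nowhere dense and $U^G(u_\zeta,n_\zeta)$ is a name for a nonempty open set, we may find $s_\zeta\le r_\zeta$, a point $v_\zeta\in U^{s_\zeta}(u_\zeta,n_\zeta)$ and an index $i_\zeta<n^{s_\zeta}$ with $U^{s_\zeta}(v_\zeta,i_\zeta)\subseteq U^{s_\zeta}(u_\zeta,n_\zeta)$ and $s_\zeta\Vdash U^G(v_\zeta,i_\zeta)\cap\dot A=\emptyset$.

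By a $\Delta$-system argument combined with pigeonhole refinements, find an uncountable $I\subseteq\omega_1$ such that for all $\zeta<\xi\in I$ the conditions $s_\zeta$ and $s_\xi$ are twins with twin function $\ssigma$ sending $u_\zeta\mapsto u_\xi$ and $v_\zeta\mapsto v_\xi$, the numbers $n^{s_\zeta}$, $n_\zeta$ and $i_\zeta$ are constant on $I$, and the root $R$ of the $\Delta$-system lies pointwise below every varying part $A_\zeta:=\pi[I^{s_\zeta}]\setminus R$. This last arrangement, achieved by first passing to a tail of $\omega_1$ above $\sup R$, forces $\pi[I^{s_\zeta}]=R\cup A_\zeta<A_\xi\supseteq\pi[I^{p_\xi}\setminus I^{p_\zeta}]$ whenever $\zeta<\xi$ in $I$, because any element of $I^{p_\xi}\setminus I^{p_\zeta}$ projects into $A_\xi$. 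Note also that $u_\zeta\in (I^{p_\zeta}\setminus I^{p_\xi})\cap(\omega_1\times I_2)$, since $\alpha_\zeta\in A_\zeta$ is disjoint from $\pi[I^{s_\xi}]\supseteq\pi[I^{p_\xi}]$.

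Fix any $\zeta<\xi$ in $I$ and apply Lemma \ref{lm:main_uniform}$(ii)$ with the twins $p=p_\zeta$, $q=p_\xi$, the extension $r=r_\zeta$, the further extension $s=s_\zeta$, and the points $u=u_\zeta$, $v=v_\zeta$. We obtain $t\le s_\zeta, p_\xi$ with
\[
u_\xi=\ssigma(u_\zeta)\in\bigcap_{i<n^{s_\zeta}}U^t(v_\zeta,i)\subseteq U^t(v_\zeta,i_\zeta).
\]
But $t\le p_\xi$ forces $u_\xi\in\dot A$, while $t\le s_\zeta$ forces $U^G(v_\zeta,i_\zeta)\cap\dot A=\emptyset$, and $U^t(v_\zeta,i_\zeta)\subseteq U^G(v_\zeta,i_\zeta)$ in any generic extension containing $t$. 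This is a contradiction, so $\dot A$ must be somewhere dense.

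The main technical obstacle is the second step: one must simultaneously arrange a twin structure on the $s_\zeta$'s that restricts to a twin structure on the weaker conditions $p_\zeta$ (mapping $u_\zeta\to u_\xi$), pigeonhole down the auxiliary data $(n^{s_\zeta},n_\zeta,i_\zeta,v_\zeta)$, and guarantee the strict separation $\pi[I^{s_\zeta}]<\pi[I^{p_\xi}\setminus I^{p_\zeta}]$ needed to invoke Lemma \ref{lm:main_uniform}$(ii)$; the last requirement is what forces us to pass to a tail above the root before the $\Delta$-system step. Once the set-up is in place the conclusion follows directly from the combinatorics already encoded in Lemma \ref{lm:main_uniform}$(ii)$.
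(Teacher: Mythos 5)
Your proof is correct and rests on exactly the same engine as the paper's: Lemma \ref{lm:main_uniform}(ii) applied with $p=p_\zeta$, $q=p_\xi$, the singleton-shrinking extension $r$, a further extension $s$, and the point $u\in Z$, so that the twin image $\ssigma(u)$ is forced into every $U^t(v,i)$. The difference is purely organizational. The paper argues directly: it fixes $\zeta=0$ once and for all, shows that the single condition $r\le p_0$ forces $\{a_\xi:\xi<\omega_1\}\cap\dot A$ to be \emph{dense} in $U^G(a_0,n)$, and for an arbitrary $s\le r$ and $v\in U^s(a_0,n)$ simply picks $\xi$ so large that $\pi[I^s]<\pi[I^{p_\xi}\setminus I^{p_0}]$ — which is free, since the $p_\xi$ were already uniformized into pairwise twins with increasing supports. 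You instead argue by contradiction, which forces you to commit to one condition $s_\zeta$ per index \emph{before} knowing which pairs will be matched, and therefore to run a second $\Delta$-system/pigeonhole pass making the $s_\zeta$ pairwise twins whose twin functions restrict to twin functions of the $p_\zeta$ and respect $u_\zeta,v_\zeta,n_\zeta,i_\zeta$; this is exactly what is needed to recover hypothesis (b), $\pi[I^{s_\zeta}]<\pi[I^{p_\xi}\setminus I^{p_\zeta}]$, and your justification of it (the root of the $s$-system meets $\pi[I^{p_\xi}]$ only inside $\pi[I^{p_\zeta}]$ because the restricted twin function is the identity there) is the one genuinely delicate point, and it is right. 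Two cosmetic remarks: the containment $U^{s_\zeta}(v_\zeta,i_\zeta)\subseteq U^{s_\zeta}(u_\zeta,n_\zeta)$ you demand of $s_\zeta$ is never used (only $v_\zeta\in U^{s_\zeta}(u_\zeta,n_\zeta)$ and $s_\zeta\Vdash U^G(v_\zeta,i_\zeta)\cap\dot A=\emptyset$ enter the contradiction), and it is not obviously obtainable at the level of conditions, so it is better dropped; and the paper's direct density formulation buys a shorter argument by avoiding the second uniformization entirely.
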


\begin{proof}[Proof of Lemma \ref{lm:luzin_i2}]
Assume $p\Vdash \dot A=\{\dot a_{\zeta}:{\zeta}<\oo\}\subs E_{\omega}$.

Pick conditions $\{p_{\zeta}:{\zeta}<\oo\}$,  
and ordinals $\{{\alpha}_{\zeta}:{\zeta}<\oo\}\subs \oo$
such that $p_{\zeta}\Vdash \dot a_{\zeta}=\<{\alpha}_{\zeta},k\>$.
We can assume that 
\begin{enumerate}[(i)]
 \item if ${\zeta}<{\xi}<\oo$ then 
$p_{\zeta}$ and $p_{\xi}$ are twins,  so $n^{p_{\zeta}}=n$.
  \item $a_{\zeta}\in I^{p_{\zeta}}\setm I^{p_{\xi}}$ for ${\xi}\ne {\zeta}$.
\item $\ssigma_{{\zeta},{\xi}}(a_{\zeta})=a_{\xi}$, where
$\ssigma_{{\zeta},{\xi}}$ is the twin function from 
$p_{\zeta}$ to $p_{\xi}$.
\end{enumerate}

Let $r\le p_0$, $I^r=I^{p_0},$ $n^r=n+1$, 
$U^r(x,i)=U^{p_0}(x,i)$ for $i<n$ and 
$U^r(x,n)=\{x\}$ for $x\in I^r$.

\begin{claim}
$r\Vdash$ ``$ A\cap \{a_{\zeta}:{\zeta}<\oo\}$'' is dense in $U^G(a_0,n)$``. 
\end{claim}

Indeed, assume that $s\le r$ such that $s\Vdash v\in U^G(a_0,n)$, i.e.
$v\in U^s(a_0,n)$.  Pick ${\xi}<\oo$ such that 
$\pi[I^s]<\pi[I^{p_{\xi}}\setm I^{p_0}]$.

Then, by lemma \ref{lm:main_uniform}, there is a condition
$t\le s,p_{\xi}$ such that 
\begin{equation}
 a_{\xi}\in \bigcap_{i<n^s} U^t(v,i).
\end{equation}
Thus
\begin{equation}
t\Vdash  \dot A\cap\bigcap_{i<n^s} U^G(v,i)\ne\empt.
\end{equation}
Since $s$ and $v$ were arbitrary, we proved the claim, and so does the lemma.
\end{proof}

Now let $\mbb P = \mbb Q \times \mbb C_{{\omega}_2}$ , where $\mbb C_{{\omega}_2}$ 
is the standard poset adding  ${\omega}_2$ many Cohen-reals.

 Let $G = G_0\times G_1$ be a generic filter in $\mbb P$, such that $G_0$ is generic
in $\mbb Q$. Consider the space $X^{G_0}=(\oo\times ({\omega}+1), \tau^{G_0} )$.
Since $V[G]=V[G_1][G_0]$, it follows that $X^{G_0}$ and the corresponding $T$,
$Y$ and $Z$ satisfy (A)--(C).
However $V[G]=V[G_0][G_1]$ as well, so 
MA${}_{\oo}(countable)$ also holds.

We claim that $T$ is $D$-separable; indeed, $T$ is $\sigma$-discrete, $w(T)=\oo$ and 
MA${}_{\oo}(countable)$ holds hence Lemma \ref{martin} implies that
\begin{enumerate}[(A)]\addtocounter{enumi}{3}
 \item $T$ is D-separable.
\end{enumerate}
Similarly, since $Y$ is $\sigma$-nowhere dense, $w(T)=\oo$ and 
MA${}_{\oo}(countable)$ holds, Lemma \ref{martin} implies that
\begin{enumerate}[(A)]\addtocounter{enumi}{4}
 \item $Y$ is NWD-separable.
\end{enumerate}
Finally observe that (A)--(E) imply (1)--(3). This finishes the proof of the theorem.
\end{proof}

\section{Monotonically normal spaces - positive results} \label{mn}


Barman and Dow's aforementioned result suggests that convergence properties have some influence on selective versions of separability. In this section our first aim is to prove that $nwd$-separability and $D$-separability are equivalent in the class of monotonically normal spaces. This result exploits a weak convergence property which is satisfied by all monotonically normal spaces.


 
\begin{definition}
(\cite{DTTW}) A space $X$ is called \emph{discretely generated} (\emph{nowhere densely generated}) if for every set $A \subset X$ and every point $x \in \overline{A}$ there is a discrete (nowhere dense) $D \subset A$ such that $x \in \overline{D}$.
\end{definition}

The property of being discretely generated (nowhere densely generated) is called \emph{discrete tightness} (\emph{nowhere dense tightness}) by Bella and Malykhin in \cite{BM}. Of course every crowded discretely generated space is nowhere densely generated, but the converse doesn't hold, as the following example shows.

\begin{example}
There is a nowhere densely generated space which is not discretely generated.
\end{example}

\begin{proof}

Let $X$ be the set of all countably supported functions in $2^{\omega_1}$ with the countably supported box product topology, and $Y$ be any countable non-discretely generated space (for example, a countable maximal space). We claim that $X \times Y$ is the desired example.

\vspace{.1in}

\noindent {\bf Claim 1.} Every meager set is nowhere dense in $X$.

\begin{proof}[Proof of Claim 1]
Let $\{N_n: n < \omega \}$ be a countable family of nowhere dense sets in $X$, $\sigma \in Fn(\omega_1, 2, \omega_1)$ and define $[\sigma]:=\{f \in X: f \supset \sigma \}$. Since $N_0$ is nowhere dense, the set $[\sigma] \setminus \overline{N_0}$ is non-empty, and thus we can find a countable partial function $\sigma_0 \supset \sigma$ such that $[\sigma_0] \subset [\sigma] \setminus \overline{N_0}$. Suppose we have constructed an increasing sequence of partial functions $\{\sigma_k: k \leq n \}$. Since $N_{n+1}$ is nowhere dense we can find a partial function $\sigma_{n+1} \supset \sigma_n$ such that $[\sigma_{n+1}] \subset [\sigma] \setminus \overline{N_{n+1}}$. Let $\sigma_{\omega}=\bigcup_{n<\omega} \sigma_n$, which is a countable partial function since $\{\sigma_n: n <\omega \}$ is a sequence of compatible countable partial functions. Then $[\sigma_\omega]$ is a non-empty open set contained in $[\sigma]$ and disjoint from $\overline{\bigcup_{n<\omega} N_n}$. This shows that $[\sigma]$ cannot be contained in $\overline{\bigcup_{n<\omega} N_n}$ and thus this latter set is nowhere dense.

\renewcommand{\qedsymbol}{$\triangle$}
\end{proof}

 \noindent {\bf Claim 2.} The space $X$ is discretely generated.
 
 \begin{proof}[Proof of Claim 2]
 Note that the character of a point $x \in X$ is equal to $\aleph_1$. Let $A \subset X$ be a non-closed set and $x \in \overline{A} \setminus A$. Since $X$ is a $P$-space we can fix a decreasing local base $\{U_\alpha: \alpha < \aleph_1 \}$ at $x$. For every $\alpha < \aleph_1$ pick $x_\alpha \in U_\alpha \cap A$. Then $S=\{x_\alpha: \alpha < \aleph_1\}$ converges to $x$. If $S$ had another accumulation point $y \neq x$, then, since $X$ is a $P$-space, every neighbourhood of $y$ should hit $S$ into uncountably many points. But that contradicts convergence. So $S$ is a discrete set such that $x \in \overline{S}$.
 \renewcommand{\qedsymbol}{$\triangle$}
  \end{proof}
 
 The space $X \times Y$ is not discretely generated because it contains a homeomorphic copy of $Y$. Let $A \subset X \times Y$ and $(x,y) \in \overline{A}$. 
 
 Let $\{y_n: n< \omega \}$ be an enumeration of the set $\pi_Y(A)$ and set $P_n=\{z \in X: (z, y_n) \in A \}$. Moreover define $B \subset Y$ to be the set $$B= \{y_n: x \in \overline{P_n} \}.$$
 
 \noindent {\bf Claim 3.} The point $y$ is in the closure of $B$.
 
 \begin{proof}[Proof of Claim 3] Suppose that this is not the case and let $V$ be a neighbourhood of $y$ which misses $B$. Let $S \subset \omega$ be the set such that $V \cap \pi_Y(A)= \{y_n: n \in S \}$. For every $n \in S$ we have that $x \notin \overline{P_n}$, and thus we can find an open neighbourhood $U_n$ of $x$ such that $U_n \cap P_n=\emptyset$. But then $(\bigcap_{n<\omega} U_n) \times V$ is a neighbourhood of $(x,y)$ which misses $A$ and this is a contradiction
 \renewcommand{\qedsymbol}{$\triangle$}
 \end{proof}
 
 Let $T \subset \omega$ such that $B=\{y_n: n \in T \}$. For every $n \in T$ we have that $x \in \overline{A_n}$, so, by Claim 2, there is a discrete $D_n \subset A_n$ such that $x \in \overline{D_n}$. Now since $X$ is dense-in-itself and $D_n$ is nowhere dense, by Claim 2 we have that $\bigcup_{n \in T} D_n$ is nowhere dense. Thus the set $N:=\bigcup_{n \in T} D_n \times \{y_n\} \subset A$ is also nowhere dense and it is easy to see that $(x,y) \in \overline{N}$. This proves that $X \times Y$ is nowhere densely generated.
\end{proof}

These convergence-type properties are very useful in our context and this is apparent from the following fact.


\begin{fact} \label{easyfact}
Every separable discretely generated (nowhere densely generated) space is $D$-separable ($NWD$-separable).
\end{fact}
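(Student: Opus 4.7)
The plan is to combine the countable dense set coming from separability with the generation property, using a standard partition-of-$\omega$ trick so that a single discrete (respectively nowhere dense) subset of each $D_n$ suffices to witness denseness at every point of the countable dense set.

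Concretely, fix a countable dense set $S=\{s_k:k<{\omega}\}$ in $X$ and a sequence $\{D_n:n<{\omega}\}$ of dense subsets. Since each $D_n$ is dense, $s_k\in\overline{D_n}$ for every $k,n$. By discrete generation (resp.\ nowhere dense generation), for every pair $(n,k)$ there exists a discrete (resp.\ nowhere dense) subset of $D_n$ whose closure contains $s_k$. The issue is that the definition of $D$-separability (resp.\ $NWD$-separability) only allows one such subset per index $n$, so I cannot directly handle all pairs $(n,k)$ independently.

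To fix this, I would partition $\omega$ into infinitely many infinite blocks $\{A_k:k<{\omega}\}$. For each $k<{\omega}$ and each $n\in A_k$, use the generation property to pick a discrete (resp.\ nowhere dense) set $E_n\subseteq D_n$ with $s_k\in\overline{E_n}$. Each $n$ lies in exactly one block, so a single $E_n\subseteq D_n$ gets chosen for each $n$.

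Then $\bigcup_{n<{\omega}}E_n$ is dense: given any $k<{\omega}$, pick any $n\in A_k$ and observe $s_k\in\overline{E_n}\subseteq\overline{\bigcup_{m<{\omega}}E_m}$. Hence $S\subseteq\overline{\bigcup_{m<{\omega}}E_m}$, and denseness of $S$ forces $\bigcup_{m<{\omega}}E_m$ to be dense in $X$. There is no real obstacle here; the only thing to notice is that the partition device converts ``one discrete/nwd witness per pair $(n,k)$'' into ``one discrete/nwd witness per $n$,'' which is exactly what the selective separability definitions demand.
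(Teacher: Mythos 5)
Your argument is correct; since each $k$ gets hit by some $n$ in your partition, the union $\bigcup_n E_n$ has closure containing the countable dense set $S$ and is therefore dense, with each $E_n$ a discrete (resp.\ nowhere dense) subset of $D_n$ as required. The paper states this as a Fact without proof, and your partition (really just a surjection $n\mapsto k(n)$ of $\omega$ onto $\omega$) is exactly the standard argument being left implicit.
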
 

Of course, we would be happier to obtain a relationship between $d$-separability and $D$-separability, but unfortunately, we already saw that there can be even first-countable, $d$-separable spaces which are not $D$-separable, so there is no way to simply replace separability with $d$-separability in Fact $\ref{easyfact}$. Another approach would be to try and strengthen discrete generability to something more suitable to our purposes. This amounts to nothing more than replacing points with discrete sets:

\begin{definition} \label{ddg}
A space is \emph{discretely discretely generated} (in short, DDG) if for every set $A \subset X$ and every discrete set $D \subset \overline{A}$ there is a discrete set $E \subset A$ such that $D \subset \overline{E}$.
\end{definition}

\begin{fact}
\cite{AJR} Every discretely discretely generated $d$-separable space is $D$-separable.
\end{fact}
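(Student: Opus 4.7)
The overall strategy is to treat $d$-separability as providing a fixed ``discrete skeleton'' of $X$ and to use discrete discrete generability to transplant this skeleton into any prescribed sequence of dense sets, one term at a time. Since $D$-separability is what we want and $d$-separability is what we have, the DDG property is essentially designed to bridge the two: it lets us replace a discrete set sitting in a closure by a discrete set sitting in the prescribed set, without losing the closure points.

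\medskip

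Concretely, let $X$ be $d$-separable and DDG, and let $\{E_n : n<\omega\}$ be a sequence of dense subsets of $X$; we must produce discrete $F_n\subseteq E_n$ with $\bigcup_{n<\omega} F_n$ dense. First I would invoke $d$-separability to fix discrete sets $\{D_n:n<\omega\}$ in $X$ with $\bigcup_{n<\omega}D_n$ dense in $X$. Next, for each $n<\omega$, observe that because $E_n$ is dense we have $\overline{E_n}=X\supseteq D_n$, so $D_n$ is a discrete subset of $\overline{E_n}$. Applying DDG with $A=E_n$ and $D=D_n$ (this is Definition \ref{ddg}) produces a discrete set $F_n\subseteq E_n$ with $D_n\subseteq\overline{F_n}$. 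Finally,
\begin{equation}\notag
\overline{\bigcup_{n<\omega}F_n}\ \supseteq\ \bigcup_{n<\omega}\overline{F_n}\ \supseteq\ \bigcup_{n<\omega}D_n,
\end{equation}
and the right-hand side is dense in $X$, so $\bigcup_{n<\omega}F_n$ is dense, witnessing $D$-separability.

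\medskip

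There is really no obstacle here; the content of the argument lies entirely in the formulation of DDG, which was tailor-made so that the discrete sets witnessing $d$-separability can be ``re-routed'' through any given dense set. In particular, one does not need the individual $D_n$ to be dense, only their union, which is exactly what $d$-separability supplies, and one does not need to mix the indices $n$ and $m$ (the naive pairing $n\mapsto n$ suffices, unlike in the more delicate union-type arguments of Theorem \ref{tm:union}). If anything, the moral is that without the ``discrete$\to$discrete'' upgrade in Definition \ref{ddg}, ordinary discrete generability would only give us a sequence of \emph{points} hitting closures of the $E_n$, which is enough for separability-type conclusions (as in Fact \ref{easyfact}) but not for the stronger conclusion that the witnessing subsets of the $E_n$ themselves be discrete.
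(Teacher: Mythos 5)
Your argument is correct: since each $E_n$ is dense, $D_n\subseteq X=\overline{E_n}$, so DDG applies verbatim to yield discrete $F_n\subseteq E_n$ with $D_n\subseteq\overline{F_n}$, and density of $\bigcup_n D_n$ then forces density of $\bigcup_n F_n$. The paper states this Fact without proof (citing \cite{AJR}), and your two-line derivation is exactly the intended argument; there is nothing to add.
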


%


The authors of \cite{AJR} proved that every monotone normal space is DDG; hence monotone normal, $d$-separable spaces are $D$-separable. We need the following closely related result:

\begin{lemma} \label{MNlemma}
Let $X$ be a monotonically normal space, $A \subset X$ be a dense set and $N \subset X$ be nowhere dense. Then there is a discrete set $D \subset A$ such that $N \subset \overline{D}$.
\end{lemma}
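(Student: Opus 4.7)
The plan is to reduce to a clean setup and then construct $D$ by transfinite recursion using the operator characterization of monotone normality.

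First I would reduce to the case where $N$ is closed and disjoint from $A$. Replacing $N$ with $\overline{N}$ keeps $N$ nowhere dense and only strengthens the conclusion, so we may assume $N$ is closed. Since $X \setminus N$ is then open and dense and $A$ is dense, $A \setminus N$ is still dense in $X$, so after replacing $A$ by $A \setminus N$ we may further assume $A \cap N = \emptyset$.

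Next, I would fix a Heath--Lutzer--Zenor operator $H$ witnessing monotone normality: an assignment, to each pair $(x, U)$ with $x$ in an open set $U$, of an open neighborhood $H(x, U)$ of $x$ contained in $U$, monotone in $U$, and satisfying $H(x, X \setminus \{y\}) \cap H(y, X \setminus \{x\}) = \emptyset$ whenever $x \neq y$. Enumerating $N = \{x_\alpha : \alpha < \kappa\}$, I would recursively choose $a_\alpha \in A$ together with an open set $V_\alpha \ni a_\alpha$ by setting
\[V_\alpha = H\bigl(a_\alpha,\, X \setminus (N \cup \{a_\beta : \beta < \alpha\})\bigr),\]
while insisting that $a_\alpha$ be picked inside a small neighborhood of $x_\alpha$ disjoint from $\overline{\bigcup_{\beta < \alpha} V_\beta}$. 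If this recursion goes through, the resulting $D = \{a_\alpha : \alpha < \kappa\}$ is a discrete subset of $A$: each $V_\alpha$ witnesses that $a_\alpha$ is isolated in $D$, since earlier $a_\beta$'s are excluded from $V_\alpha$ by the definition of $V_\alpha$ and later $a_\gamma$'s are chosen to avoid $V_\alpha$. Moreover, every $x_\alpha$ lies in $\overline{D}$ by the placement of $a_\alpha$.

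The main obstacle is precisely the inductive step: ensuring that at stage $\alpha$ the set $A$ still meets every neighborhood of $x_\alpha$ in a point outside $\overline{\bigcup_{\beta < \alpha} V_\beta}$. Since each $V_\beta$ is contained in $X \setminus N$ by construction, $x_\alpha$ itself lies outside the union, but one must further control the closure; this is where the HLZ-disjointness property is used, allowing a judicious refinement of the $V_\beta$'s inside their defining $H$-sets so that $x_\alpha$ has an $H$-neighborhood missing every $\overline{V_\beta}$ for $\beta < \alpha$. A cleaner alternative I would try is to first run a variant of the same construction with no constraint on which subset of $X$ the points come from, producing a discrete $D_0 \subseteq X \setminus N$ with $N \subseteq \overline{D_0}$, and then invoke the result of \cite{AJR} cited just above in the excerpt, that every monotonically normal space is discretely discretely generated. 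Since $A$ is dense so $\overline{A} = X$, applying DDG to $D_0 \subseteq \overline{A}$ yields a discrete $D \subseteq A$ with $D_0 \subseteq \overline{D}$, and therefore $N \subseteq \overline{D_0} \subseteq \overline{D}$.
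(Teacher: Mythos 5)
Your reduction to the case where $N$ is closed and $A\cap N=\emptyset$ is fine, but the main construction has two genuine gaps. First, placing a single point $a_\alpha\in A$ in ``a small neighborhood of $x_\alpha$'' does not put $x_\alpha$ into $\overline{D}$: for that you need points of $D$ in \emph{every} neighborhood of $x_\alpha$, and in a general (not first-countable) monotonically normal space an enumeration of $N$ that handles each point once cannot supply this. Second, the inductive step you flag as ``the main obstacle'' really does fail as stated: at stage $\alpha$ the point $x_\alpha$ may already lie in $\overline{\bigcup_{\beta<\alpha}V_\beta}$ without lying in $\overline{\{a_\beta:\beta<\alpha\}}$ (indeed the whole point of the construction is that $N$ ends up in the closure of what you build), so there need be no neighborhood of $x_\alpha$ disjoint from that closure; and the ``judicious refinement of the $V_\beta$'s'' is not available, since those sets were fixed at earlier stages of the recursion. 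Your fallback via DDG is circular: to apply DDG to the dense set $A$ you must first produce a discrete $D_0\subseteq X$ with $N\subseteq\overline{D_0}$, and since $N$ itself is not discrete, DDG gives no such $D_0$ --- producing it is exactly the instance of the lemma with $A$ replaced by the dense set $X\setminus\overline{N}$, i.e.\ the very difficulty you are trying to bypass.

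The paper's proofs avoid aiming points at the elements of $N$ altogether. One version recursively picks $x_\beta\in A\setminus\overline{\bigcup\{H(x_\alpha,U_\alpha):\alpha<\beta\}}$ with $U_\beta\cap\overline{N}=\emptyset$ and the $H(x_\alpha,U_\alpha)$ pairwise disjoint, continuing until $N\subseteq\overline{\bigcup_\alpha H(x_\alpha,U_\alpha)}$ (the recursion can proceed as long as this fails, because the complement of that closure is then a nonempty open set meeting $N$, hence meeting $A\setminus\overline{N}$). The crucial final step, which your sketch is missing, is the transfer from ``$y$ is in the closure of the union of the $H$-neighborhoods'' to ``$y$ is in the closure of the chosen points'': if $y\in N\setminus\overline{D}$, then $H(y,X\setminus\overline{D})$ meets some $H(x_\tau,U_\tau)$, and the monotone normality axiom forces either $x_\tau\in X\setminus\overline{D}$ or $y\in U_\tau$, both impossible. (The other proof in the paper achieves the same effect with a maximality argument using iterated $H$-sets.) If you want to salvage your argument, this disjointness trick is the missing idea; without it neither the direct recursion nor the DDG detour closes.
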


\begin{proof}[First Proof]
Let $\mc U$ be a maximal system of pairs $\<x,U\>\in A\times \tau_X$
such that 
\begin{enumerate}[(1)]
 \item $x\in U\subs X\setm N$,
\item $\<x,U\>\ne \<x',U'\>\in \mc U$ then $H^2(x,U)\cap H^2(x',U')=\empt$.
\end{enumerate}

Let $D=\{x:\<x,U\>\in \mc U\}$. Clearly $D\subs A$ is discrete.

We show $N\subs \overline{D}$.
Let $y\in N$. Assume on the contrary that $y\in W\in \tau_X$ with $W\cap D=\empt.$

If $\<x,U\>\in \mc U$, then $y\notin U\subs X\setm N$ and $x\notin W$
so 
\begin{equation}
 H(y,W)\cap H(x,U)=\empt.
\end{equation}
Let $V\subs H(y,W)\subs X\setm N$ be open and pick $z\in A\cap V$.
Then $z\notin H(x,U)$ and $x\notin V$.
Thus
\begin{equation}
 H(z,V)\cap H^2(x,U)=\empt.
\end{equation}
Thus $\mc U$ was not  not maximal because $\mc U\cup \{\<z,V\>\}$ also satisfies 
(1) and (2).
Contradiction. 
\end{proof}

\begin{proof}[Second Proof]
Suppose you constructed open sets $\{U_\alpha: \alpha < \beta \}$, points $\{x_\alpha: \alpha < \beta \} \subset A$ such that:

\begin{enumerate}
\item $x_\alpha \in U_\alpha$.
\item $U_\alpha \cap \overline{N}=\emptyset$.
\item $\{H(x_\alpha, U_\alpha): \alpha < \beta \}$ is a pairwise disjoint family.
\end{enumerate}

If $N \nsubseteq \overline{\bigcup \{H(x_\alpha, U_\alpha): \alpha < \beta \}}$ then use the fact that $A$ is dense to choose $x_\beta \in A \setminus  \overline{\bigcup \{H(x_\alpha, U_\alpha): \alpha < \beta \}}$ such that $x_\beta \notin \overline{N}$. Now choose a neighborhood $U_\alpha$ of $x_\alpha$ such that $U_\alpha \cap \overline{N}=\emptyset$.

Let $\gamma$ be the least ordinal such that $N \subset \overline{\bigcup \{H(x_\alpha, U_\alpha): \alpha < \beta \}}$. We claim that $D=\{x_\alpha: \alpha < \gamma \}$ is the required discrete set. 
Indeed, suppose by contradiction that there is $y \in N \setminus \overline{\{x_\alpha: \alpha < \gamma\}}$. We have $H(y, X \setminus \overline{\{x_\alpha: \alpha < \beta \}}) \cap H(x_\tau, U_\tau) \neq \emptyset$ for some $\tau < \beta$. 
So either $x_\tau \in X \setminus \overline{\{x_\alpha: \alpha < \gamma\}}$ or $y \in U_\tau$, but both lead to a contradiction.
\end{proof}

\begin{theorem} \label{MNthm}
Every monotonically normal, $nwd$-separable space is $D$-separable.
\end{theorem}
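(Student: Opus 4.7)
The plan is to apply Lemma \ref{MNlemma} essentially once per index, using the $nwd$-separability as a supply of nowhere dense targets. Since $X$ is $nwd$-separable, fix nowhere dense sets $\{N_n : n < \omega\}$ such that $N := \bigcup_{n < \omega} N_n$ is dense in $X$. Now suppose we are given an arbitrary sequence $\{D_n : n < \omega\}$ of dense subsets of $X$; we must produce discrete $E_n \subset D_n$ with dense union.

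For each $n < \omega$, the set $D_n$ is dense and $N_n$ is nowhere dense, so by Lemma \ref{MNlemma} (applied with $A = D_n$ and $N = N_n$) we obtain a discrete set $E_n \subset D_n$ with $N_n \subset \overline{E_n}$. Then
\begin{equation*}
N = \bigcup_{n < \omega} N_n \subset \bigcup_{n < \omega} \overline{E_n} \subset \overline{\bigcup_{n < \omega} E_n},
\end{equation*}
and since $N$ is dense in $X$ this forces $\bigcup_{n<\omega} E_n$ to be dense as well. This shows $X$ is $D$-separable.

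Thus the entire difficulty of the theorem has already been absorbed into Lemma \ref{MNlemma}; once that lemma is in hand, the deduction of $D$-separability from $nwd$-separability is essentially formal, and there is no further obstacle. The lemma is precisely the mechanism that converts a nowhere dense ``target'' sitting in the closure of a dense set into a discrete subset of that dense set whose closure still covers the target, which is exactly what a selective refinement from $nwd$-separability to $D$-separability requires.
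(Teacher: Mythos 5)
Your proof is correct and is essentially identical to the paper's own argument: both apply Lemma \ref{MNlemma} once per index to the pair $(D_n, N_n)$ and observe that the resulting discrete sets have dense union because their closures cover the dense set $\bigcup_n N_n$. No further comment is needed.
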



\begin{proof}
Let $X$ be a monotonically normal space with a $\sigma$-nowhere dense set $D=\bigcup_{n<\omega} N_n$. Fix a sequence of dense sets $\{D_n:n\in\omega\}$ as well. Let us apply Lemma $\ref{MNlemma}$ to pick discrete sets $E_n \subset D_n$ such that $N_n \subset \overline{E_n}$. Then $\bigcup_{n<\omega} E_n$ is a dense subset of $X$ and this witnesses that $X$ is $D$-separable.
\end{proof}

The following theorem can be derived from Theorem $\ref{MNthm}$ and Theorem 28 of \cite{BMS}. We offer an alternative proof based on Mary Ellen Rudin's famous result that every compact monotonically normal space is the continuous image of a compact linearly ordered space.

\begin{theorem}
Every compact, monotonically normal, $nwd$-separable space has a $\sigma$-disjoint $\pi$-base.
\end{theorem}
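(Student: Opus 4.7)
The plan is to use Rudin's theorem to reduce to a compact linearly ordered space, where the order structure lets us build the $\pi$-base by hand. First, I would take a continuous surjection $f: L \to X$ with $L$ a compact linearly ordered space, and by a Zorn argument restrict $L$ to a closed subspace minimal among those still mapping onto $X$; then $f$ is \emph{irreducible}. The property I would exploit throughout is that for every nonempty open $U \subseteq L$ the set $U^{*} := X \setminus f(L \setminus U)$ is a nonempty open subset of $X$ contained in $f(U)$. From this it follows quickly that $f$-preimages of dense sets stay dense in $L$, and $f$-preimages of nowhere dense sets stay nowhere dense in $L$; in particular $nwd$-separability transfers from $X$ to $L$.

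I would then work entirely inside $L$. Let $\bigcup_{n\in\omega} N_n$ be dense in $L$ with each $N_n$ closed nowhere dense. Any isolated point of $L$ would have to lie in some $N_n$ and would force $N_n$ to have nonempty interior, so $L$ is crowded; in particular every nonempty convex open subset of $L$ is infinite. For each $n$ let $\mathcal{U}_n$ be the collection of maximal convex components of the open set $L \setminus N_n$; by maximality $\mathcal{U}_n$ is pairwise disjoint. I would claim that $\bigcup_n \mathcal{U}_n$ is a $\pi$-base for $L$: given a nonempty convex open $V \subseteq L$, the set $\bigcup_n N_n \cap V$ is infinite (a finite closed set cannot be dense in the infinite $V$), so by pigeonhole some $N_n \cap V$ is infinite; choosing three points $d_1 < d_2 < d_3$ in it guarantees $(d_1, d_3)$ is nonempty (it contains $d_2$), and nowhere-denseness of $N_n$ supplies $y \in (d_1, d_3) \setminus N_n$, whose $L \setminus N_n$-component is a member of $\mathcal{U}_n$ trapped between $d_1$ and $d_3$, hence inside $V$ by convexity.

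Finally, I would push the $\pi$-base back to $X$ by setting $\mathcal{V}_n := \{U^{*} : U \in \mathcal{U}_n\}$. Each $U^{*}$ is nonempty open by irreducibility; disjointness of $\mathcal{U}_n$ transfers to $\mathcal{V}_n$ because disjoint $U_1, U_2$ cannot both contain a whole fibre $f^{-1}(x)$; and for any nonempty open $W \subseteq X$, picking $U \in \bigcup_n \mathcal{U}_n$ with $U \subseteq f^{-1}(W)$ yields $U^{*} \subseteq W$. Therefore $\bigcup_n \mathcal{V}_n$ is the desired $\sigma$-disjoint $\pi$-base of $X$.

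The main obstacle I expect is the middle step inside $L$: arranging that inside each nonempty convex open $V$ \emph{some single} $N_n$ already supplies enough points for a component of $L \setminus N_n$ to be trapped strictly between them. Both the pigeonhole (forcing $|N_n \cap V| \geq 3$, since a crowded convex open $V$ cannot be swallowed by a finite closed set) and the linear order structure (using a middle point $d_2$ to certify $(d_1, d_3) \neq \emptyset$ even when adjacent pairs abound in $L$) are essential here; neither tool is available in an arbitrary monotonically normal space, which is exactly why Rudin's representation theorem is the right entry point.
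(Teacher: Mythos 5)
Your overall strategy is exactly the paper's: reduce via Rudin's theorem to an irreducible preimage by a compact ordered space, build the $\pi$-base there from the convex components of the complements of the nowhere dense pieces, and push it back through $U\mapsto X\setminus f(L\setminus U)$. (The paper does the order-theoretic step first, for GO-spaces, and only then invokes Rudin; you reverse the order, which also lets you record explicitly why $nwd$-separability transfers to $L$ under an irreducible map --- a point the paper leaves implicit.) The transfer steps at both ends of your argument are all correct.

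There is, however, one step in the middle that fails as written: ``the set $\bigcup_n N_n\cap V$ is infinite, so by pigeonhole some $N_n\cap V$ is infinite.'' A countable union of finite sets can perfectly well be infinite, so no pigeonhole applies; indeed, if you take $L=[0,1]$ and let each $N_n$ be a single rational, every $N_n\cap V$ is at most a singleton, and the family of convex components of the sets $L\setminus N_n$ is \emph{not} a $\pi$-base of $[0,1]$. So without further hypothesis on the $N_n$ the claimed conclusion of the middle step is actually false, not merely unproved. The repair is the standard one, which the paper makes explicitly at the outset: replace $N_n$ by $N_0\cup\dots\cup N_n$, which is still closed and nowhere dense, so that the sequence is increasing. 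Then $\bigcup_n N_n\cap V$ infinite does give a single $n$ with $|N_n\cap V|\ge 3$ (you never need it infinite --- three points $d_1<d_2<d_3$ suffice, exactly as you use them), and the rest of your component-trapping argument goes through verbatim. With that one-line amendment the proof is complete and essentially identical in content to the paper's.
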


\begin{proof}
Let us remark that $X$ does not have isolated points because it is
nwd-separable.

Assume first that $X$ is a GO-space, i.e., it is a  subspace of an ordered space $Y$.

Let $\{N_n:n\in {\omega}\}$ be a family of nowhere dense subsets of $X$ such that
$\cup\{N_n:n\in {\omega}\}$ is dense.  We can assume $N_0\subs N_1\subs \dots$.

For each $n\in {\omega}$ consider $Y\setm \overline{N_n}$, and let 
$\mc U_n$ be the natural partition of $Y\setm \overline{N_n}$ into maximal convex sets.
Let $\mc V_n=\{U\cap X: U\in \mc U_n\}$.

We claim that $\mc V=\bigcup_{n\in {\omega}}\mc V_n$ is a $\pi$-base.

Indeed, let $(y,y')$ be an open interval with $X\cap (y,y')\ne \empt$.
Since $X$ is dense-in-itself, we can find $x_0, x_1,x_2\in X$ with
$y<x_0<x_1<x_2<y'$. Then $(y,x_1)\cap X\ne \empt\ne (x_1,y')\cap X$.
So there is $n$ such that $(y,x_1)\cap N_n\ne \empt\ne (x_1,y')\cap N_n$.
Pick $x_0'\in (y,x_1)\cap N_n$ and $x_2'\in (x_1,y')\cap N_n$.
Since $X\cap (x_0', x_2')\ne \empt$ and $N_n$ is nowhere dense,
we can find  $x_1'\in X\cap (x_0', x_2')\setm \overline{N_1}$.
Pick $U\in \mc U_n$ with $x_1'\in U$. Then $x_0', x_2'\notin U$, so 
$U\subs (x_0', x_2')\subs (y,y')$. Thus $\empt\ne U\cap X\subs X\cap (y,y')$.

\medskip

Now let $X$ be arbitrary.
Then, by Rudin's theorem, 
$X$ is the continuous image of a  compact, ordered space $Y$, $
f:Y\twoheadrightarrow X$. Then there is a closed subspace $Z$ of $Y$
such that that map $g=f\restriction Z$ is irreducible. 
Then $Z$ is a GO space, and it does not have isolated points because
$g$ is irreducible, and $X$ is dense-in-itself.

So $Z$ has a $\sigma$-disjoint $\pi$-base $\mc U$.

We claim that 
\begin{equation}
\mc V=\{X\setm {g[Z\setm U]}: U\in \mc U\} 
\end{equation}
is a $\sigma$-disjoint $\pi$-base of $X$.

First observe that if $U\in \mc U $, then $X\setm {g[Z\setm U]}\ne \empt$, i.e. 
$X\ne {g[Z\setm U]}$, because $g$ is irreducible.  Thus $\empt\notin \mc V$.

To check that $\mc V$ is a $\pi$-base, pick an arbitrary non-empty set $V\subs X$. Then  then there is $U\in \mc U$
with $U\subs g^{-1}V$. Then $X\setm g[X\setm U]\subs V$.

Finally we show that $\mc V$ is $\sigma$-disjoint.
Since $\mc U$ was $\sigma$-disjoint, it is enough to show that $U\cap U'=\empt$
implies $(X\setm f[Z\setm U])\cap  (Z\setm f[X\setm U'])=\empt$.
Indeed, assume that $(X\setm f[Z\setm U])\cap  (Z\setm f[X\setm U'])\ne \empt$.
Pick $x\in X\setm (f[Z\setm U])\cup  f[Z\setm U'])$. Fix $z\in Z$
with $g(z)=x$. Then $z\in U\cap U'$, i.e. $U\cap U'\ne \empt$.
\end{proof}

Finally, we turn our attention to a particularly interesting space: $$\sigma(2^\oo)=\{x\in 2^\oo: |x^{-1}(1)|<\omega\}$$ and to the question whether this space is $D$-separable. First, note that $\sigma(2^\oo)$ is $\sigma$-discrete and hence $d$-separable. A natural approach would then to prove that $\sigma(2^\oo)$ is DDG. However, we will show that it is independent of ZFC whether $\sigma(2^\oo)$ is DDG. 
More precisely, we prove that

\begin{theorem}\label{lm:ma_ea2}
If MA${}_{\aleph_1}$  holds then   
$\sigma(2^\oo)$ is DDG. 
\end{theorem}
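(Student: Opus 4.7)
The plan is to prove, for any discrete $D\subseteq\overline{A}$ in $\sigma(2^{\oo})$, the existence of a discrete $E\subseteq A$ with $D\subseteq\overline{E}$ via a ccc forcing construction whose generic filter (supplied by $\mathrm{MA}_{\aleph_1}$) selects the required points $a(d)\in A\cap U_d$ together with separating rectangles. The case $|D|\le\aleph_0$ is handled by countable tightness of $\Sigma$-products (or by the same construction recursively, requiring no MA), so we focus on $|D|=\aleph_1$.

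For each $d\in D$ fix a basic open $U_d=[s_d]\cap\sigma(2^{\oo})$ with $U_d\cap D=\{d\}$ and $s_d\supseteq d\restriction\operatorname{supp}(d)$. Let $\mbb{P}$ be the poset of triples $p=(F_p,a_p,t_p)$ with $F_p\in [D]^{<\omega}$, $a_p\colon F_p\to A$ satisfying $a_p(d)\in U_d$, $t_p\colon F_p\to Fn(\oo,2;\omega)$ satisfying $s_d\subseteq t_p(d)\subseteq a_p(d)$, and for distinct $d,d'\in F_p$ the separation $a_p(d')\notin [t_p(d)]$. Order $q\le p$ iff $F_p\subseteq F_q$, $a_p\subseteq a_q$, and $t_q(d)=t_p(d)$ for $d\in F_p$; the chosen separating rectangles of old points are frozen.

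The three main verifications are as follows. \emph{Density (i):} for every $d\in D$, $\{p:d\in F_p\}$ is dense---given $p$, note $d\in U_d\setminus\bigcup_{d'\in F_p}[t_p(d')]$ since each $[t_p(d')]\subseteq U_{d'}$ meets $D$ only at $d'$, so $d\in\overline{A}$ provides $a\in A$ in that open set, and one extends $s_d$ on finitely many coordinates to separate $a$ from each $a_p(d')$, defining the new $t_q(d)\subseteq a$. \emph{Density (ii):} for each basic open $V\ni d$, the set $\{p:\exists d''\in F_p,\ a_p(d'')\in V\}$ is dense, by the same extension applied to $V\cap U_d$ (taking $d''=d$). \emph{Chain condition:} given $\{p_\alpha:\alpha<\oo\}$, apply the $\Delta$-system lemma to the $F_{p_\alpha}$ (root $\sigma^*$) and to the total supports $T_\alpha=\bigcup_{d\in F_{p_\alpha}}\bigl(\operatorname{supp}(a_{p_\alpha}(d))\cup\operatorname{dom}(t_{p_\alpha}(d))\cup\operatorname{supp}(d)\bigr)$ (root $R$), and arrange that $a_{p_\alpha}(d)$ and $t_{p_\alpha}(d)$ are independent of $\alpha$ on $\sigma^*$. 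The amalgamation $r=(F_{p_\alpha}\cup F_{p_\beta},a_{p_\alpha}\cup a_{p_\beta},t_{p_\alpha}\cup t_{p_\beta})$ is then a condition; the only nontrivial check is $a_{p_\beta}(d')\notin[t_{p_\alpha}(d)]$ for leaf $d\in F_{p_\alpha}\setminus\sigma^*$ and $d'\in F_{p_\beta}\setminus\sigma^*$. This holds because $t_{p_\alpha}(d)\supseteq s_d$ assigns value $1$ at some $\gamma\in\operatorname{supp}(d)\subseteq T_\alpha\setminus R$, while $\operatorname{supp}(a_{p_\beta}(d'))\subseteq T_\beta$ misses $\gamma$; since $R$ is finite and elements of $\sigma(2^{\oo})$ are determined by their supports, refining to guarantee $\operatorname{supp}(d)\not\subseteq R$ for leaf $d$ only excludes finitely many $\alpha$.

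Applying $\mathrm{MA}_{\aleph_1}$ to the $\aleph_1$ many dense sets from (i) and (ii) yields a filter $G$ and an injective $a\colon D\to A$; set $E=a[D]$. For any $e=a(d_0)\in E$ pick $p\in G$ with $d_0\in F_p$; given any other $a(d_1)\in E$ one extends to $q\in G$ containing $d_1$, obtains $t_q(d_0)=t_p(d_0)$ from the ordering and $a_q(d_1)\notin[t_q(d_0)]$ from validity, so $[t_p(d_0)]$ isolates $a(d_0)$ in $E$. Density $D\subseteq\overline{E}$ is immediate from (ii). The main obstacle is the ccc verification: it rests on the built-in constraint $s_d\subseteq t_p(d)$ forcing a value-$1$ coordinate into the leaf position, together with careful bookkeeping in the $\Delta$-system refinement to handle the degenerate cases $d=0\in D$ and leaf $d$'s with support in $R$ (both affecting only finitely many conditions, hence discardable).
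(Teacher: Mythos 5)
There is a genuine gap, and it is structural rather than a matter of missing detail. Your poset attaches exactly \emph{one} point $a_p(d)\in A\cap U_d$ to each $d\in F_p$, so the final set is $E=a[D]$ with one point of $A$ per point of $D$. But in $\sigma(2^{\oo})$ (viewing points as finite subsets of $\oo$), a point $d$ is an accumulation point of a set $B$ if and only if $B$ contains an infinite $\Delta$-system with kernel $d$ (this is Lemma \ref{lm:accu}); in particular, $d\in\overline{E}$ requires infinitely many points of $E$ whose supports contain $\operatorname{supp}(d)$ and which pairwise agree only there. A single chosen point per $d$ can never supply this for $d$ itself, and nothing in your poset coordinates the points chosen for the \emph{other} elements of $D$ so that they cluster at $d$ — indeed $A\cap V\cap U_{d''}$ may simply be empty for $d''\ne d$. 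Correspondingly, your density claim (ii) is false as stated: once $d\in F_p$ with $a_p(d)\notin V$, the value of $a_p(d)$ is frozen by your ordering and there is no way to extend $p$ into $\{q:\exists d''\in F_q,\ a_q(d'')\in V\}$; your own justification ("taking $d''=d$") presupposes $d\notin F_p$. Since those sets are not dense, MA${}_{\aleph_1}$ gives you a filter meeting only the genuinely dense ones, and $D\subseteq\overline{E}$ does not follow.

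The paper's forcing repairs exactly this point: conditions are finite sets of pairs $\<a,e\>$ with $a$ ranging over a prechosen infinite $\Delta$-system $A_e\subseteq A\cap U(e,z(e))$ with kernel $e$ (which exists whenever $e\in A'$), subject to a separation clause making the final union discrete; the dense sets $\mc D_{e,n}$ demand at least $n$ points attached to $e$, so the generic produces for each $e$ an \emph{infinite} $\Delta$-system $F_e\subseteq A$ with kernel $e$, whence $e\in F_e'$ by the accumulation criterion. There is also a preliminary reduction you omit and which your setup cannot absorb: the points $e\in E\cap A$ with $e\notin A'$ must be placed into the discrete set themselves (the paper's $E_0$), since no other points of $A$ accumulate at them; your construction would happily assign such an $e$ some other point of $A\cap U_e$ and lose $e$ from $\overline{E}$. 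Your ccc analysis is broadly in the right spirit (it parallels the paper's twin/$\Delta$-system argument), but it cannot rescue the proof because the object the generic builds is the wrong one.
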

\noindent and 
\begin{theorem}\label{tm:diamond_ddg}
If $\diamondsuit$  holds then   $\sigma(2^\oo)$ is not DDG.
\end{theorem}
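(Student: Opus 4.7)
The plan is to construct, using $\diamondsuit$, a pair $(A,D)$ in $\sigma(2^{\oo})$ witnessing the failure of DDG. Identify $\sigma(2^{\oo})$ with $[\oo]^{<\omega}$, where basic opens take the form $[r,s]=\{F\in[\oo]^{<\omega}:r\subs F,\ s\cap F=\empt\}$ for disjoint finite $r,s\subs\oo$. Take $D=\{\{\alpha\}:\alpha<\oo\}$, which is discrete (isolated by $[\{\alpha\},\empt]$). The key observation is that $\{\alpha\}\in\overline B$ holds exactly when the family $\mc F_\alpha(B)=\{b\setm\{\alpha\}:b\in B,\ \alpha\in b\}$ admits no finite hitting set in $\oo\setm\{\alpha\}$. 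Hence the goal reduces to building $A\subs[\oo]^{<\omega}$ with $\mc F_\alpha(A)$ never finitely hittable (so $D\subs\overline A$), yet $\mc F_\alpha(E)$ finitely hittable for \emph{some} $\alpha$ depending on each discrete $E\subs A$ (so DDG fails).

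Using a bijection $\oo\to[\oo]^{<\omega}$, transfer $\diamondsuit$ to a sequence $\<\tilde S_\alpha:\alpha<\oo\>$ so that, for every $S\subs\oo$, the set of $\alpha$ with $\tilde S_\alpha=S\cap\alpha$ is stationary. Build $A=\bigcup_{\alpha<\oo}A_\alpha$ recursively, where each $A_\alpha$ is a countable collection of finite subsets $a$ of $\oo$ with $\min a=\alpha$ and $|a|\ge 2$; the min-element invariant guarantees that only $A_{\le\alpha}$ contributes to $\mc F_\alpha(A)$. At stage $\alpha$, examine $E^0_\alpha:=\tilde S_\alpha\cap A_{<\alpha}$: if $E^0_\alpha$ is discrete in $A_{<\alpha}$ and $\mc F_\alpha(E^0_\alpha)$ admits a finite hitting set, record one such set as $s^*_\alpha\subs\oo\setm\{\alpha\}$. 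Then design $A_\alpha$ to achieve (i) richness, $\{\alpha\}\in\overline{A_\alpha}$, via sufficient spread among the ``extensions'' of $\alpha$; (ii) compatibility, $a\cap s^*_\beta\ne\empt$ whenever $\beta\in a$ and $\beta<\alpha$; and (iii) derailment-preservation, forcing any discrete $E\supsetneq E^0_\alpha$ in $A$ to have $\mc F_\alpha(E\cap A_\alpha)$ also hit by $s^*_\alpha$, achieved via a ``fan'' of larger extending sets in $A_\alpha$ that destroys discreteness whenever an included element misses $s^*_\alpha$.

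For verification, suppose $E\subs A$ is discrete with $D\subs\overline E$ and let $S$ code $E$. By $\diamondsuit$ the set $\{\alpha:\tilde S_\alpha=S\cap\alpha\}$ is stationary, and a pressing-down argument on the isolating data of $E$ shows that on a club of such $\alpha$, $\mc F_\alpha(E\cap A_{<\alpha})$ admits a finite hitting set, so $s^*_\alpha$ is recorded. By (iii), $s^*_\alpha$ hits $\mc F_\alpha(E\cap A_\alpha)$; the min-element invariant gives $\mc F_\alpha(E\cap A_{>\alpha})=\empt$. Thus $s^*_\alpha$ hits all of $\mc F_\alpha(E)$, so $\{\alpha\}\notin\overline E$, contradicting $D\subs\overline E$.

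The main obstacle is twofold. First, the pressing-down step: for arbitrary discrete $E\subs A$ with $D\subs\overline E$, one must exhibit a club of $\alpha$'s where $\mc F_\alpha(E\cap A_{<\alpha})$ is finitely hittable, which in turn relies on the design of $A$ making the isolating data of $E$ ``converge'' along a club via a regressive function built from these data. Second, the combinatorial design of $A_\alpha$ at each stage must simultaneously achieve (i)--(iii); balancing derailment-preservation against richness, without destroying either, is the most delicate technical point, and requires careful bookkeeping of the finitely many past constraints $s^*_\beta$ in tandem with the spread-and-fan structure placed at stage $\alpha$.
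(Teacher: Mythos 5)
Your setup is right: the discrete set $D=\{\{\alpha\}:\alpha<\omega_1\}$, the reformulation of $\{\alpha\}\in\overline B$ via finite hitting sets (equivalently, infinite $\Delta$-systems with kernel containing $\alpha$ -- this is the paper's Lemma on accumulation points), and the overall $\diamondsuit$-guessing scheme all match the paper. The gap is in stratifying $A$ by \emph{minimum} element, which makes your steps (ii) and (iii) unworkable. First, since every $a\in A_\alpha$ has $\min a=\alpha$, no such $a$ contains any $\beta<\alpha$, so your compatibility condition (ii) is vacuous. More seriously, on the club of $\alpha$ with $\bigcup A_{<\alpha}\subseteq\alpha$ the whole burden of richness falls on $A_\alpha$: to get $\{\alpha\}\in\overline A$ you must put into $A_\alpha$ an infinite family $\{a_n\}$ for which $\{a_n\setminus\{\alpha\}\}$ has no finite hitting set, hence (after thinning) is pairwise disjoint. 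Any infinite subfamily of such $\{a_n\}$ is then itself discrete, accumulates at $\{\alpha\}$, lives entirely in $[\alpha,\omega_1)$, and is invisible to the stage-$\alpha$ guess, which only records $E\cap A_{<\alpha}$; moreover its union with any discrete $E^0_\alpha\subseteq A_{<\alpha}$ is still discrete, since no $a_n$ end-extends a nonempty subset of $\alpha$. Your ``fan'' of extensions cannot repair this: adding extensions of $a_n$ to $A$ only gives a discrete $E$ more sets it may \emph{omit}; it never forces $E$ to contain an extension of $a_n$ inside $a_n$'s isolating neighbourhood, and any attempt to force this by making some $\{\beta\}$ witnessable only through sets extending $a_n$ would kill $\{\beta\}\in\overline A$ itself, since the common finite core would be a hitting set at $\beta$. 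So in your framework (i) and (iii) are in genuine conflict, not merely delicate.

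The paper resolves exactly this tension by stratifying by \emph{maximum}: $\mathcal A_\beta\subseteq[\beta]^{<\omega}$, so every set containing $\alpha$ enters only at stages $>\alpha$, after the guess at $\alpha$ is available. A coherence condition (the trace $b\cap\nu$ of any member below any $\nu$ is itself a member constructed by stage $\nu$, whenever it has size at least $2$) then forces every $b\in\mathcal A$ with $\alpha\in b$ to end-extend one of the sets $\{\alpha\}\cup b^\alpha_n$, where the $b^\alpha_n$ are pairwise disjoint members of the \emph{guessed} $E\cap[\alpha]^{<\omega}$. The sabotage thus uses positive information from the guess (known members of $E$, whose isolating sets lie below $\alpha$ on a club) rather than a hitting set: any $b\in E$ containing $\alpha$ lies in the isolating neighbourhood of $b^\alpha_n\in E$, contradicting discreteness directly, while richness of $\mathcal A$ at $\{\alpha\}$ is preserved because the $b^\alpha_n$ are pairwise disjoint. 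Replacing your min-invariant by this max-based bookkeeping, and the hitting-set mechanism by ``every set containing $\alpha$ end-extends a guessed member of $E$,'' turns your outline into the paper's proof.
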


\newcommand{\uu}[2]{U(#1,#2)}

If $a\in \br \oo;<\omega;$, then we denote the characteristic function on $a$ by $\chi_a$.
The map $a\mapsto \chi_a$  is a bijection between $\br \oo;<\omega;$ and $\sigma(2^\oo)$.
For $A\subs  \br \oo;<\omega;$
write
$\chi[A]=\{\chi_a:a\in A\}$
.

Let $$\tau=\{A\subs  \br \oo;<\omega;:\chi[A]\text{ is open in }\sigma(2^\oo)\}.$$
Instead of $\sigma(2^\oo)$ we will consider a homeomorphic copy of that space: the space  $\mc X=\<\br \oo;<\omega;, \tau\>$.

For  $x,y\in \br \oo;<\omega;$ with $x\cap y=\empt$, let
\begin{displaymath}
 \uu xy=\big\{z\in \br \oo;<\omega;: x\subs z \ \land \ y\cap a=\empt\big\}.
\end{displaymath}
If $a\in \br \oo;<\omega;$, then the family
\begin{equation}\notag
\big\{U(a,b):b\in \br \oo\setm a;<\omega;\big \}  
\end{equation}
is a neighborhood base of $a$ in $\mc X$.

\begin{lemma}\label{lm:accu}
Let $a\in \br \oo\setm a;<\omega;$ and
$B\subs \br \oo\setm a;<\omega;$.
Then $a\in B'$ iff $B$ contains an infinite $\Delta$-system with kernel $a$. 
\end{lemma}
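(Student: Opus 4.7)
\medskip
\noindent\textbf{Proof plan.} The plan is to use the explicit neighborhood basis of $a$ — namely $\{U(a,y):y\in[\omega_1]^{<\omega},\ y\cap a=\empt\}$ — to translate the accumulation condition into a disjointness condition on the ``petals'' $b\setm a$.

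The easy direction ($\Leftarrow$) is a direct verification: given an infinite $\Delta$-system $\{b_n:n<\omega\}\subs B$ with kernel $a$, I would show that \emph{every} basic neighborhood $U(a,y)$ contains $b_n$ for all but finitely many $n$. Since $a\subs b_n$ (the kernel condition), the first requirement in the definition of $U(a,y)$ is automatic. For the second, $a\cap y=\empt$ implies $b_n\cap y=(b_n\setm a)\cap y$; because the petals $b_n\setm a$ are pairwise disjoint and $y$ is finite, only finitely many petals can meet $y$. Hence $b_n\in U(a,y)$ for cofinitely many $n$, so $a\in B'$.

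For the harder direction ($\Rightarrow$), note that $\mc X$ is Hausdorff (being homeomorphic to a subspace of $2^{\oo}$), so $a\in B'$ means that every neighborhood of $a$ meets $B$ in infinitely many points. I would build the $\Delta$-system $\{b_n:n<\omega\}\subs B$ by recursion. Suppose $b_0,\dots,b_{n-1}\in B$ have been chosen with $a\subs b_i$ for every $i<n$ and $b_i\cap b_j=a$ for $i<j<n$. Set
\begin{equation}\notag
 y_n=\bigcup_{i<n}(b_i\setm a).
\end{equation}
Then $y_n\in[\omega_1]^{<\omega}$ and $y_n\cap a=\empt$, so $U(a,y_n)$ is a neighborhood of $a$; by hypothesis it meets $B$ in infinitely many points, and I can pick $b_n\in B\cap U(a,y_n)$ distinct from $b_0,\dots,b_{n-1}$. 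The defining conditions of $U(a,y_n)$ give $a\subs b_n$ and $b_n\cap y_n=\empt$; combined with $a\subs b_i$ the latter forces $b_n\cap b_i\subs a$, hence $b_n\cap b_i=a$ for each $i<n$. Iterating yields the required $\Delta$-system.

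No step looks delicate; the only thing worth being careful about is that the two conditions defining $U(a,y)$ — ``$a$ is a subset'' and ``$y$ is disjoint'' — are exactly what is needed to encode, respectively, the kernel of a $\Delta$-system and the pairwise disjointness of its petals. That correspondence is really the whole content of the lemma.
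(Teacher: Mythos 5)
Your proof is correct and follows essentially the same route as the paper: the forward direction is the same recursion, picking $b_n\in B\cap U\bigl(a,\bigcup_{i<n}(b_i\setm a)\bigr)$, and the backward direction is the same observation that a finite $y$ can meet only finitely many of the pairwise disjoint petals. The only cosmetic difference is that you invoke Hausdorffness to get infinitely many points of $B$ in each neighborhood, where the paper simply picks the new element from $B\setm\{a\}$ at each stage.
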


\begin{proof}
Assume first that $a\in B'$. Choose $b_0, b_1,\dots$ from $B\setm \{a\}$ such that
\begin{equation}\notag
b_n\in B\cap \uu a{(b_0\cup\dots \cup  b_{n-1})\setm a}.
\end{equation}
Since $a\in B'$ we can construct such a sequence,
and observe that $\{b_0, b_1m\dots\}$ is an infinite $\Delta$-system with kernel $a$.

Assume now that $B$ is an infinite $\Delta$-system with kernel $a$.
If $\uu ac$ is a neighborhood of $a$, then we can pick $b\in B\setm \{a\}$ with $c\cap b=\empt$, and then
$b\in \uu ac$. So $a\in B'$.
\end{proof}

To prove Theorem \ref{lm:ma_ea2} we need the following lemma.
\begin{lemma}\label{lm:ma_ea1}
(MA${}_{\aleph_1}$)   If $E\subs \sigma(2^\oo)$ is discrete, 
$A\subs \sigma(2^\oo)$, and $E\subs A'$, then there is a discrete $D_1\subs A$ with $E_1\subs D_1'$.
\end{lemma}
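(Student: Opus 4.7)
The plan is to apply MA$_{\aleph_1}$ to a ccc poset of finite approximations, whose generic filter produces $D_1$. Using the discreteness of $E$, for each $e\in E$ fix a finite $c_e\in\br\oo\setm e;<\omega;$ with $U(e,c_e)\cap E=\{e\}$; since $E\subs A'$, Lemma \ref{lm:accu} provides an infinite $\Delta$-system $\mc B_e\subs A\cap U(e,c_e)$ with kernel $e$ whose members strictly contain $e$, and $\mc B_e\cap E=\empt$.

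Define the poset $P$: conditions are triples $p=(F_p,h_p,\gamma_p)$ with $F_p\in\br E;<\omega;$, $h_p(e)\in\br\mc B_e;<\omega;$ for each $e\in F_p$, and $\gamma_p:D_p\to\br\oo;<\omega;$ (where $D_p:=\bigcup_{e\in F_p}h_p(e)$) with $\gamma_p(b)\cap b=\empt$, satisfying the isolation condition that $b'\notin U(b,\gamma_p(b))$ for all distinct $b,b'\in D_p$. The order refines $F_p$ and $h_p$ monotonically and keeps $\gamma_p$ fixed on existing elements. For ccc, apply standard $\Delta$-system thinning to $F_{p_\alpha}$, $h_{p_\alpha}$, $\gamma_{p_\alpha}$, and to the finite sets $\bigcup D_{p_\alpha}\subs\oo$; after pigeonholing which subsets of the common root $U^*$ can occur in each $D_{p_\alpha}$, every element of $D_{p_\alpha}\setm D^*$ has a coordinate outside $U^*$ and hence is $\subs$-incomparable with every element of $D_{p_\beta}\setm D^*$ whose extra coordinates lie in a disjoint piece of $\oo$. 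The natural union of two such conditions is therefore a common extension.

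Applying MA$_{\aleph_1}$ to the $\oo$-many dense sets $\mc D_{e,N}=\{p:e\in F_p,\ |h_p(e)|\ge N\}$ yields a filter $G$; set $D_1:=\bigcup_{p\in G}D_p\subs A$. For each $e\in E$, $\bigcup_{p\in G}h_p(e)$ is an infinite subset of $\mc B_e$, hence an infinite $\Delta$-system with kernel $e$, so $e\in D_1'$ by Lemma \ref{lm:accu}. For discreteness of $D_1$: given $b\in D_1$, pick $p_0\in G$ with $b\in D_{p_0}$; the committed $\gamma_{p_0}(b)$ isolates $b$ in $D_1$, since for every $b'\in D_1\setm\{b\}$ directedness of $G$ yields $p\le p_0$ in $G$ with $b'\in D_p$, and the isolation condition at $p$—combined with $\gamma_p(b)=\gamma_{p_0}(b)$—forces $b'\notin U(b,\gamma_{p_0}(b))$.

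The main obstacle is the density of $\mc D_{e,N}$: when we extend $h_p(e)$ by a new $b=e\cup t\in\mc B_e$, for each existing $b_0\in D_p$ with $b_0\subs e$ the containment $b_0\subs b$ is forced, so we must have $b\cap\gamma_p(b_0)\ne\empt$. To handle this the poset must encode additional density conditions guaranteeing that $\gamma_p(b_0)$ was chosen, when $b_0$ first entered, to hit each $e\in E$ subsequently added to $F$ with $b_0\subs e$; that this can be arranged with $\gamma_p(b_0)$ finite uses the discreteness of $E$ together with Lemma \ref{lm:accu} to control the combinatorial complexity of the set $\{e\in E:e\supseteq b_0\}$ for $b_0\in D_1$.
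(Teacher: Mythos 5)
Your overall architecture --- a ccc poset of finite approximations, MA$_{\aleph_1}$ applied to the dense sets $\mc D_{e,N}$, Lemma \ref{lm:accu} and $\Delta$-systems to certify both accumulation and ccc --- is the same as the paper's. But the step you yourself flag as ``the main obstacle,'' the density of $\mc D_{e,N}$, is exactly where the content of the lemma lives, and your sketch of how to close it does not work as stated. Once $b_0$ enters $D_p$, its isolating set $\gamma_p(b_0)$ is frozen by your order, so no system of ``additional density conditions'' can retroactively arrange that $\gamma_p(b_0)$ meets every $e\in E$ above $b_0$ that you might later need to serve: the right choice has to be hard-wired into the definition of a condition, not hoped for generically. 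And the fallback you offer --- invoking Lemma \ref{lm:accu} to ``control the combinatorial complexity'' of $\{e\in E:e\supseteq b_0\}$ --- is not an argument; that lemma says nothing about this set.

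The missing idea is a specific choice: for $a\in A_e$ (your $\mc B_e$) the isolating parameter is taken to be $z(e)$ itself, the finite set witnessing that $e$ is isolated in $E$, so a condition is simply a finite set of pairs $\langle a,e\rangle$ with $a\in A_e$ subject to $a\notin U(a',z(e'))$ for distinct pairs, with no free $\gamma$-component at all. To extend, one picks $a\in A_e$ whose petal avoids the finite support of $p$ (including all the sets $z(e')$ occurring in $p$), so that $a$ meets that support exactly in $e$. Then the dangerous case $a'\subseteq a$ forces $a'\subseteq e$, hence $e\in U(e_0,z(e_0))\cap E$ where $a'\in A_{e_0}$, hence $e=e_0$ by discreteness of $E$; and then $a$ and $a'$ are distinct members of the single $\Delta$-system $A_{e_0}$, so they are $\subseteq$-incomparable and no violation occurs. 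This one computation --- discreteness of $E$ collapsing the containment case into the incomparable-$\Delta$-system case --- is what makes density (and hence the whole proof) go through, and it is absent from your proposal; your ccc argument and the extraction of $D_1$ from the generic filter are fine modulo this.
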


\begin{proof}
For $e\in E$ pick $z(e)\in \br \oo\setm a;<\omega;$ with $E\cap \uu e{e(Z)}=\{e\}$.  
Since $e\in A'$, there is an infinite $\Delta$-system $A_e\subs \uu e{e(Z)}\cap A$ with kernel $e$.

Define $\mc P=\<P,\le\>$ as follows. Let 
\begin{multline}
P=\{p\ |\ p\in \br A\times E;<{\omega};\ \land\
\forall \<a,e\>\in p\quad \ a\in A_e\ \land\\ 
\forall \<a,e\>\ne \<a',e'\>\in p \quad a\notin \uu {a'}{z(e')}.
\}. 
\end{multline}
Let $p\le q$ iff $p\supseteq q$.

\begin{claim}\label{cl:sigma_ccc}
$\mc P$ satisfies c.c.c. 
\end{claim}

\begin{proof}[Proof of the Claim]
Assume $\{p_{\alpha}:{\alpha}<\oo\}\subs P$. 

Let $A_{\alpha}=supp (p_{\alpha})=\bigcup_{\<a,e\>\in p_{\alpha}}(a\cup z(e))$. 
There are ${\alpha}<{\beta}<\oo$ and a bijection $\sigma:A_{\alpha}\to  A_{\beta}$
such that $\sigma\restriction A_{\alpha}\cap A_{\beta}=id$, and 
\begin{equation}
 p_{\beta}=\{\<\sigma[a],\sigma[e]\>:\<a,e\>\in p_{\alpha}\}.
\end{equation}
Then $p_{\alpha}\cup p_{\beta}\in P$. Indeed, if $\<a,e\>\in p_{\alpha}\setm p_{\beta}$ and
 $\<a',e'\>\in p_{\beta}\setm p_{\alpha}$, then $a'\not\subs A_{\alpha}\cap A_{\beta}$,
so $a\notin \uu {a'}\empt$.
\end{proof}

\begin{claim}\label{cl:sigma_density}
If $p\in P$, $e\in E$, and $e\in \uu ez$,  then there is $a\in A\cap \uu ez$
such that $p\cup\{\<a,e\>\}\in P$.  
\end{claim}

\begin{proof}[Proof of the Claim]
 
There is $a\in A_e$ such that 
\begin{enumerate}[(D1)]
 \item $e\subs a$,
\item\label{D2} $(supp(p)\cup z)\cap a=e$,
\item $a\setm supp(p)\ne \empt$.
\end{enumerate}
We claim that this $a$ works.

Indeed, if $\<a',e'\>\in p$, then $a'\not\supset a$, and so $a'\notin \uu a\empt\supset \uu a{z(e)}$.
On the other hand, assume on the contrary that $a\in  \uu {a'}{z(e')}$. Then
$e\in \uu {a'}{z(e')}$ by (D\ref{D2}). Thus $e=e'$. Thus $a,a'\in A_e$, and so
$a\notin \uu {a'}\empt$. Contradiction. 
\end{proof}

For $e\in E$ and $n\in \omega$ let
\begin{equation}\notag
 \mc D_{e,n}=\{p\in P: |\{a: \<a,e\>\in p\}|\ge n\}.
\end{equation}
By Claim \ref{cl:sigma_density}, the sets $\mc D_{e,n}$ are dense.
Let 
\begin{equation}\notag
\mbb D=\{D_{e,n}:e\in E, n\in \omega \}. 
\end{equation}
By Claim \ref{cl:sigma_ccc}, $MA_{\aleph_1}$ implies that 
there is a $\mbb D$-generic filter $\mc G\subs P$.
For $e\in E$ let 
\begin{equation}\notag
 F_a=\{a\in A: \<a,e\>\in \bigcup \mc G\},
\end{equation}
and 
\begin{equation}\notag
D_1=\bigcup\{F_e:e\in E\}. 
\end{equation}
Then $D_1$ is discrete, because if $\<a,e\>\in \bigcup \mc G$, then 
$D_1\cap \uu a{z(e)}=\{a\}$ by the construction of the poset.
Moreover,  for $e\in E$ the set   $F_e$ is infinite by the genericity of $\mc G$, and 
$F_e$ is a $\Delta$-system with kernel $e$ by the construction of the poset. So $e\in F'_e\subs D_1'$

Thus we proved the lemma.
\end{proof}

We now finish with the proof of the first theorem:

\begin{proof}[Proof of Theorem \ref{lm:ma_ea2}]
Let  $A\subs \sigma(2^\oo)$, and let $E\subs \overline{A}$ be a discrete set. 
We need to find  a discrete set $D\subs A$ with $E\subs \overline D$.

To start with   let $E_0=\{e\in E\cap A: e\notin A'\}$.

Then $E_0$ is discrete, moreover $E_0\cap \overline{A\setm E_0}=\empt$.
Let $A_1=A\setm \overline{E_0}$ and $E_1=E\setm \overline{E_0}$.
Then $E_1\in A_1'$, so, by Lemma \ref{lm:ma_ea1}, there is a discrete set  $D_1\subs A_1 $
with $E_1\subs D_1'$.  
Then 
\begin{enumerate}[(I)]
 \item $E_0\cap \overline{D_1}=\empt$ because $E_0\cap A'=\empt$, and 
 \item $\overline{E_0}\cap D_1=\empt$ because $D_1\subs A_1=A\setm \overline{E_0}$.
\end{enumerate}
Thus $D=E_0\cup D_1$ is the required set.
\end{proof}

%

\begin{proof}[Proof of Theorem \ref{tm:diamond_ddg}]
%
%
Consider the discrete subspace  $D=\br \oo;1;$ of $\sigma(2^\oo)$.
Using $\diamondsuit$ we will construct $\mc A\subs \br \oo;\ge 2;$
such that $D\subs \mc A'$, but there is no discrete $\mc E\subs \mc A$
with $D\subs \mc E'$. 

Fix a $\diamondsuit$-sequence $\<\mc A_{\alpha}:{\alpha}<\oo\>$
which guesses subsets of $\br \oo;<{\omega};$, i.e.
\begin{equation}
\forall \mc C\subs \br \oo;<{\omega};\
\{{\alpha}:  \mc C\cap \br {\alpha};<{\omega};=\mc C_{\alpha}\}
\text{ is stationary}. 
\end{equation}

We construct a continuous sequence 
$\<\mc A_{\beta}:{\omega}\le {\beta}\le \oo\>$ such that 
\begin{enumerate}[(a)]
 \item $\mc A_{\beta}\subs \{b\in \br {\beta};<{\omega};:|b|\ge 2\}$,
\item 
$\bigl\{\{{\alpha}\}:{\alpha}<{\beta}\bigr\}\subs \mc A'_{\beta}$
\item if  $b\in \mc A_{\beta}$ and 
$|b\cap {\nu}|\ge 2$ for some ${\nu}<{\beta}$, 
then $b\cap {\nu}\in \mc A_{\nu}$,
\end{enumerate}
%
%
%
as follows.
Let 
\begin{equation}
 \mc A_{{\omega}}=\{x\in \br {\omega};<{\omega};: |x|\ge 2\}.
\end{equation}
If ${\beta}$ is limit, let 
\begin{equation}
\mc A_{\beta}=\bigcup\{\mc A_{\zeta}:{\alpha}<{\beta}\}. 
\end{equation}
Assume that ${\beta}={\alpha}+1$.

If 
$\mc C_{\alpha}\not\subs \mc A_{\alpha}$
or there is ${\nu}<{\alpha}$ such that $\{{\nu}\}$ is not an accumulation point 
of $\mc C_{\alpha}$ then let  
\begin{equation}
\mc A_{{\alpha}+1}=\mc A_{\alpha}\cup\bigl\{\{{\alpha},2n,2n+1 \}:n<{\omega}
\bigr\}.
\end{equation}
Assume now that $\mc C_{\alpha}\subs \mc A_{\alpha}$ and
\begin{equation}
\bigl\{\{{\nu}\}:{\nu}<{\alpha}\bigr\}\subs \mc C'_{\alpha}. 
\end{equation}
Then for each ${\nu}<{\alpha}$ there is a $\Delta$-system
in $\mc C_{\alpha}$ with kernel ${\nu}$. So there are  
infinitely many pairwise disjoint elements 
$\{b^{\alpha}_n:n<{\omega}\}$ in $\mc C_{\alpha}$.
Let
\begin{equation}
\mc A_{{\alpha}+1}=\mc A_{\alpha}\cup\bigl\{\{{\alpha}\}\cup b^{\alpha}_n:n<{\omega}
\bigr\}.
\end{equation}

It is clear that (a)-(c) hold.

Let $\mc A=\mc A_{\oo}$.
Then $\bigl\{\{{\alpha}\}:{\alpha}<{\oo}\bigr\}\subs \mc A'$ by (b).
Assume on the contrary that there is a discrete set 
$\mc E\subs \mc A$ with $\bigl\{\{{\alpha}\}:{\alpha}<{\oo}\bigr\}\subs \mc E'$.
For each $e\in E$ fix neighborhood $\uu e{z(e)}$  of $e$ with
$\uu e{z(e)}\cap \mc E=\{e\}$.

Then there is ${\alpha}<\oo$ such that 
\begin{multline}\notag
\bigl\{\{{\nu}\}:{\nu}<{\alpha}\bigr\}\subs (\mc E\cap \br {\alpha};<{\omega};)',
z(e)\subs {\alpha} \text{ for $e\in \mc E\cap \br {\alpha};<{\omega};$},\\ 
\text{ and }
 \mc C_{\alpha}=\mc E\cap \br {\alpha};<{\omega};. 
\end{multline}
Pick $b\in \mc E\cap \uu {\{\alpha\}}\empt$.
Since $\{\{{\nu}\}:{\nu}<{\alpha}\bigr\}\subs \mc C_{\alpha}'$, 
we have  $b\cap ({\alpha}+1)=\{{\alpha}\}\cup b^{\alpha}_n$ for some $n<{\omega}$.
Since $b^{\alpha}_n\in \mc C_{\alpha}\subs \mc E$, we also have
\begin{equation}\notag
 \uu {b^{\alpha}_n}{z(b^{\alpha}_n)}\cap \mc E=\{b^{\alpha}_n\}.
\end{equation}
But $b^{\alpha}_n\cup z(b^{\alpha}_n)\subs {\alpha}$ and $b\cap {\alpha}=b^n_{\alpha}$,
so 
\begin{equation}
b\in  \uu {b^{\alpha}_n}{z(b^{\alpha}_n)}.
\end{equation}
Contradiction.
\end{proof}

The following remains unsolved:

\begin{question}
 Is it provable in ZFC that $\sigma(2^\oo)$ is $D$-separable? Does $\Diamond$ imply that $\sigma(2^\oo)$ is not $D$-separable?
\end{question}

\section{Acknowledgements}

The authors acknowledge helpful comments from Justin T. Moore and the the Set Theory and Topology groups of both the R\'enyi and Fields Institute.

\end{document}